\def\jobis#1{FF\fi
  \def\predicate{#1}%
  \edef\predicate{\expandafter\strip@prefix\meaning\predicate}%
  \edef\job{\jobname}%
  \ifx\job\predicate
}
\if\jobis{proposal}%
\DeclareMathOperator{\Supp}{Supp}
\DeclareMathOperator{\Spec}{Spec}
\DeclareMathOperator{\lct}{lct}
 \numberwithin{equation}{subsection}
 \numberwithin{footnote}{subsection}
 \newtheorem{lem}[subsection]{Lemma}
 \newtheorem{prop}[subsection]{Proposition}
 \newtheorem{thm}[subsection]{Theorem}
    \newtheoremstyle{upright}%
        {8pt plus2pt minus4pt}%
        {8pt plus2pt minus4pt}%
        {\upshape}%
        {}%
        {\bfseries\scshape}%
        {}%
        {1em}%
        {}%
\theoremstyle{upright}
 \newtheorem{constr}[subsection]{Construction}
 \newtheorem{rem}[subsection]{Remark}
 \newcommand{\N}{\mathbb N}
 \newcommand{\PP}{\mathbb P}
 \newcommand{\Q}{\mathbb Q}
 \newcommand{\R}{\mathbb R}
 \newcommand{\Z}{\mathbb Z}
 \newcommand{\bir}{\dashrightarrow}
 \newcommand{\rddown}[1]{\left\lfloor{#1}\right\rfloor} 
\title{\large E\MakeLowercase{xistence of flips and minimal models for 3-folds in char $p$}}
\thanks{2010 MSC: 14E30}
\author{\large C\MakeLowercase{aucher} B\MakeLowercase{irkar}}
\date{\today}
\begin{document}
\maketitle

\begin{abstract}
We will prove the following results for $3$-fold pairs $(X,B)$  
over an algebraically closed field $k$ of characteristic $p>5$: 
log flips exist for $\Q$-factorial dlt pairs $(X,B)$; 
log minimal models exist for projective klt pairs $(X,B)$ with pseudo-effective $K_X+B$;
the log canonical ring $R(K_X+B)$ is finitely generated for projective klt pairs $(X,B)$ when $K_X+B$ is a 
big $\Q$-divisor; semi-ampleness holds for a nef and big $\Q$-divisor $D$ if $D-(K_X+B)$ is nef and big 
and $(X,B)$ is projective klt;  
$\Q$-factorial dlt models exist for lc pairs $(X,B)$; terminal models exist for klt pairs $(X,B)$; 
 ACC holds for lc thresholds; etc.   
\end{abstract}

\tableofcontents


\section{Introduction}

We work over an algebraically closed field $k$ of characteristic (char) $p>0$.
The pairs $(X,B)$ we consider in this paper always have $\R$-boundaries $B$ unless otherwise 
stated. 

Higher dimensional birational geometry in char $p$ is still largely conjectural. Even the most basic problems 
such as base point freeness are not solved in general. 
Ironically though Mori's work on existence of rational curves which plays an 
important role in characteristic $0$ uses reduction mod $p$ techniques. 
There are two reasons among others which have held back progress in char $p$: 
resolution of singularities is not known and Kawamata-Viehweg vanishing fails. 
However, it was expected that one can work 
out most components of the minimal model program in dimension $3$. This is because 
resolution of singularities is known in dimension $3$ and many problems can be 
reduced to dimension $2$ hence one can use special features of surface geometry. 

On the positive side there has been some good progress toward understanding birational 
geometry in char $p$. People have tried to replace the characteristic $0$ tools that fail 
in char $p$. For example, 
Keel [\ref{Keel}] developed techniques for dealing with the base point free problem and 
semi-ampleness questions in general without relying on Kawamata-Viehweg 
vanishing type theorems. On the other hand, motivated by questions in commutative 
algebra, people have introduced Frobenius-singularities whose definition do not 
require resolution of singularities and they are very similar to singularities in 
characteristic $0$ (cf. [\ref{Schwede}]).

More recently Hacon-Xu [\ref{HX}] proved the existence 
of flips in dimension $3$ for pairs $(X,B)$ with $B$ having standard coefficients, that is, coefficients in 
$\mathfrak{S}=\{1-\frac{1}{n}\mid n\in \N\cup \{\infty\}\}$, and char $p>5$. From this they could derive 
existence of minimal models for $3$-folds with canonical singularities. 
In this paper, we rely on their results and ideas.
The requirement $p>5$ has to do with the behavior of singularities on surfaces, eg a klt surface singularity 
over $k$ of char $p>5$ is strongly $F$-regular.\\
 
{\textbf{\sffamily{Log flips.}}} Our first result is on the existence of flips. 

\begin{thm}\label{t-flip-1}
Let $(X,B)$ be a $\Q$-factorial dlt pair of dimension $3$ over $k$ of char $p>5$.  
Let $X\to Z$ be a $K_X+B$-negative extremal flipping projective contraction. Then its flip exists. 
\end{thm}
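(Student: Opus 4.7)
The plan is to deduce the theorem from Hacon--Xu's existence of flips in the standard-coefficient case [HX] via two reductions: from $\R$-boundary to $\Q$-boundary, and from dlt flip to pl-flip. For the first, both the dlt condition and the condition that $X \to Z$ be a $K_X+B$-negative extremal flipping contraction are open in the coefficients of $B$, so $B$ lies in the interior of a rational polytope of $\R$-boundaries $B'$ satisfying these hypotheses. Writing $B$ as a rational convex combination $B=\sum \lambda_i B_i$ of $\Q$-rational points $B_i$ in that polytope, the small extremal contraction $X\to Z$ is the same for every $B_i$, and a flip $X^+$ of any one $(X,B_i)$ is simultaneously a flip of $(X,B)$ because $K_{X^+}+B^+$ is a positive linear combination of the divisors $K_{X^+}+B_i^+$, each of which is ample over $Z$. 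Hence one may assume $B$ is a $\Q$-divisor. For the second reduction, I would apply Shokurov's classical argument, adapted to char $p$ using the existence of $\Q$-factorial dlt models in dimension $3$ (proved in parallel in this paper) and special termination in a neighbourhood of $\lfloor B \rfloor$. This reduces the existence of dlt flips to the existence of pl-flips: flipping contractions $f:X\to Z$ where $(X,S+B')$ is plt, $S$ is an irreducible component of $\lfloor B\rfloor$, the exceptional locus of $f$ lies in $S$, and $-S$ is $f$-ample.

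For such a pl-flipping contraction, existence of the flip is equivalent to finite generation of the relative log canonical algebra $\mathcal{R}=\bigoplus_{m\geq 0} f_*\mathcal{O}_X(\lfloor m(K_X+S+B')\rfloor)$. By the Shokurov--Hacon--McKernan restriction formalism, and using that $-S$ is $f$-ample with the exceptional locus of $f$ contained in $S$, this reduces to finite generation of the restricted algebra $\mathcal{R}|_S$. By adjunction, the restricted data is governed by $K_S+B_S$ on the two-dimensional klt pair $(S,B_S)$, where $B_S$ is the different; one then analyses $\mathcal{R}|_S$ by running the surface MMP on $S$, available classically in any characteristic.

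The main obstacle is extending the Hacon--Xu finite-generation step from boundaries with coefficients in $\mathfrak{S}$ to arbitrary $\Q$-boundaries. If $B$ has coefficients in $\mathfrak{S}$ then so does $B_S$, and HX's strategy applies verbatim; for general $\Q$-coefficients $B_S$ lies outside $\mathfrak{S}$ and the key step must be redone. My approach would be to combine the surface MMP for $\Q$-boundaries on $S$ with an approximation argument inside the polytope of klt boundaries on $S$, exploiting the strong $F$-regularity of klt surface singularities for $p>5$ to obtain the vanishing statements needed to transfer the conclusion from the standard-coefficient case across the polytope. The technical heart of the proof is to show that finite generation of $\mathcal{R}|_S$ is stable under such perturbations of $B_S$, so that HX's theorem implies the theorem in full generality.
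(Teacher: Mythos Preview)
Your reduction from $\R$-boundaries to $\Q$-boundaries is fine, and the overall framework (Shokurov's reduction to pl-type contractions, special termination) is indeed the backbone of the paper's argument. The gap is in your step~4: you assert that the finite generation of the restricted algebra $\mathcal{R}|_S$ can be ``transferred across the polytope'' from the standard-coefficient case by a perturbation argument, but you give no mechanism for this, and finite generation is not in general stable under perturbation of the boundary. Hacon--Xu's argument for pl-flips with standard coefficients uses in an essential way that adjunction preserves the standard set $\mathfrak{S}$, so that the different $B_S$ again has standard coefficients and the $F$-regularity and lifting machinery applies uniformly. For an arbitrary $\Q$-boundary the different lands in a larger hyperstandard set, and extending the lifting/extension step is genuinely hard: the paper notes that the direct approach of Cascini--Gongyo--Schwede only reaches hyperstandard coefficients and requires $p\gg 0$. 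So ``approximate by standard coefficients and hope stability holds'' is not a proof.

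The paper avoids this entirely. Rather than trying to prove pl-flips for general coefficients, it runs Shokurov's reduction with an induction on
\[
\zeta(X,B)=\#\{\text{components of $B$ whose coefficient is not in }\mathfrak{S}\}.
\]
Given a flipping ray with $\zeta(X,B)>0$, one picks a component $S$ with non-standard coefficient $b$, passes to a log resolution, and sets $\Delta_W=B_W+(1-b)S^\sim$ so that $S^\sim$ now has coefficient $1$ and $\zeta(W,\Delta_W)<\zeta(X,B)$. One then runs two LMMPs over the (algebraic-space) base $V$: first on $K_W+\Delta_W$, then on $K_Y+B_Y$ with scaling of $(1-b)S_Y$. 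Every extremal ray encountered is a pl-ray (some component of the reduced boundary is negative on it), so it can be contracted projectively, and each required flip has strictly smaller $\zeta$, hence exists by the inductive hypothesis. Special termination gives a model $X^+$ which is the (generalized) flip. The base case $\zeta=0$ is exactly [HX, Theorem~4.12]: $B$ has standard coefficients and some component of $\lfloor B\rfloor$ is negative on the ray. Thus the paper never needs pl-flips with non-standard coefficients; it reduces directly to the standard-coefficient pl-flip that Hacon--Xu already proved.

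A minor additional issue: you invoke $\Q$-factorial dlt models ``proved in parallel'' as input to the Shokurov reduction, but in the paper's logical order dlt models (Theorem~\ref{cor-dlt-model}) are proved \emph{after} projective flips, using them. The Shokurov reduction here only needs log resolutions, not dlt models, so this is avoidable, but be careful about circularity.
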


The conclusion also holds if $(X,B)$ is klt but not necessarily $\Q$-factorial. This 
follows from the finite generation below (\ref{t-fg}).
The theorem is proved in Section \ref{s-flips} when $X$ is projective. The quasi-projective case 
is proved in Section \ref{s-mmodels}.
We reduce the theorem to the case when $X$ is projective, $B$ has standard coefficients, 
and some component of $\rddown{B}$ is negative on the extremal ray: 
this case is [\ref{HX}, Theorem 4.12] which is one of the main results of that paper. 
A different approach is taken in [\ref{CGS}] to prove \ref{t-flip-1} 
 when $B$ has hyperstandard coefficients and $p\gg 0$ (these coefficients are of 
the form $\frac{n-1}{n}+\sum \frac{l_ib_i}{n}$ where $n \in\N\cup \{\infty\}$, $l_i\in \Z^{\ge 0}$ 
and $b_i$ are in some fixed DCC set).

To prove Theorem \ref{t-flip-1} we actually first prove the existence of \emph{generalized 
flips} [\ref{HX}, after Theorem 5.6]. See Section \ref{s-flips} for more details.\\

{\textbf{\sffamily{Log minimal models.}}}
In [\ref{HX}, after Theorem 5.6], using generalized flips, a \emph{generalized LMMP} is defined which is 
used to show the existence of minimal models for varieties with canonical singularities 
(or for pairs with canonical singularities and "good" boundaries). Using weak Zariski decompositions 
as in [\ref{B-WZD}], we construct log minimal models for klt pairs in general. 

\begin{thm}\label{t-mmodel}
Let $(X,B)$ be a klt pair of dimension $3$ over $k$ of char $p>5$ and let 
$X\to Z$ be a projective contraction. If $K_X+B$ is pseudo-effective$/Z$, then $(X,B)$ has a log minimal model over $Z$.
\end{thm}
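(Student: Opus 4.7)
The plan is to adapt the weak Zariski decomposition (WZD) framework of [\ref{B-WZD}] to the characteristic $p$ setting, using Theorem \ref{t-flip-1} as the main input for existence of flips, and invoking the generalized LMMP of [\ref{HX}] to organize the MMP steps.

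First I would perform the standard reductions: replace $(X,B)$ by a small $\Q$-factorial dlt modification (so that the singularity assumption matches the hypothesis of Theorem \ref{t-flip-1}), and by localizing over $Z$ reduce to the situation where we can run an LMMP. Since the claim is relative over $Z$, it is enough to construct a log minimal model in a neighborhood of each fiber, so no essential generality is lost by assuming the base of the contraction is suitably nice.

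The core of the argument is to establish a weak Zariski decomposition for $K_X+B$ over $Z$: on some projective birational model $f\colon W\to X$, write
\[
f^{*}(K_X+B)\;\sim_{\R}\;P+N,
\]
with $P$ nef$/Z$ and $N\geq 0$. For a $3$-fold pseudo-effective log divisor this should follow by combining a Nakayama-type $\sigma$-decomposition on a log resolution with a preliminary run of the generalized LMMP of [\ref{HX}] to peel off the rigid negative part; Theorem \ref{t-flip-1} allows every such step to be carried out in our setting. Once WZD is in hand, I would run a $(K_X+B)$-MMP$/Z$ with scaling of an auxiliary ample divisor $A$: divisorial contractions are provided by the cone theorem and base-point-freeness results of Keel-type in the paper, while flips are supplied by Theorem \ref{t-flip-1}.

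For termination, I would follow the pattern of [\ref{B-WZD}]. The presence of a WZD bounds the scaling thresholds $\lambda_i$ away from zero, so only finitely many flipping steps can occur before the log divisor becomes nef; special termination in dimension $3$ reduces to termination of an induced LMMP on the surface components of $\rddown{B}$, which is automatic since there are no flips on surfaces. The main obstacle I foresee is Step 2, namely producing the WZD in characteristic $p$ without Kawamata–Viehweg vanishing: the char $0$ proof in [\ref{B-WZD}] leans on vanishing-based ingredients, so one must substitute arguments using only the tools available here, i.e.\ Theorem \ref{t-flip-1}, the good behavior of klt surface singularities for $p>5$ (strong $F$-regularity), and Keel's semi-ampleness techniques. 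This substitution, rather than the termination or MMP step itself, is where the real work will lie.
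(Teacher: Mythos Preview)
Your high-level strategy is correct: the paper also reduces Theorem \ref{t-mmodel} to Proposition \ref{p-WZD}, which says that a weak Zariski decomposition (WZD) for $K_X+B$ over $Z$ yields a log minimal model. The genuine gap is exactly where you place it, in producing the WZD, and the method you sketch does not close it.

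A Nakayama $\sigma$-decomposition only gives $P_\sigma$ in the closure of the movable cone, not a nef divisor, so it is not a WZD in the required sense. Running a ``preliminary'' generalized LMMP on $K_X+B$ to peel off the negative part is circular: termination of that LMMP is precisely the content of the theorem, and the results of [\ref{HX}] apply only to standard coefficients or to canonical singularities, not to an arbitrary klt boundary.

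The paper's device is different and is the missing idea. After reducing to $X,Z$ projective (via Lemma \ref{l-dlt-model-proj}) and to a $\Q$-factorial terminal model, one considers
\[
\mathcal{E}=\{B'\mid 0\le B'\le B,\ K_X+B'\ \text{pseudo-effective}/Z\}
\]
and picks $B'\in\mathcal{E}$ of minimal norm. One then runs the generalized LMMP$/Z$ on $K_X+B'$ (generalized flips from Theorem \ref{t-flip-2}, divisorial contractions from Lemma \ref{l-div-ray}). The minimality of $B'$ forces that no component of $B'$ is ever contracted: if a step contracted a component $D$, then $K_X+B'-bD$ would still be pseudo-effective$/Z$ for some $b>0$, contradicting minimality. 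Hence every model in the sequence remains \emph{terminal}, and terminal $3$-fold flips terminate by the classical difficulty argument [\ref{Shokurov-nv}][\ref{Kollar-Mori}, Theorem 6.17]. The resulting nef $K_Y+B'_Y$ then supplies the WZD for $K_X+B$: on a common resolution $g\colon V\to X$, $h\colon V\to Y$, set $P=h^*(K_Y+B'_Y)$ and $M=g^*(K_X+B)-P\ge 0$.

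Your termination sketch for the second half is also inaccurate: a WZD does not bound the scaling numbers away from zero. What Proposition \ref{p-WZD} actually does is an induction on $\theta(X,B,M)$, the number of components of $f_*M$ not lying in $\rddown{B}$. When $\theta=0$ one has $\Supp M\subseteq\rddown{B}$, so every extremal ray in the LMMP using $P+M$ is a pl-ray and the process terminates by special termination (\ref{p-st}); the inductive step manufactures a new WZD with strictly smaller $\theta$ by comparing two decompositions, without running any further LMMP.
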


The theorem is proved in Section \ref{s-mmodels}. 
Alternatively, one can apply the methods of [\ref{B-mmodel}] to construct log minimal models 
for lc pairs $(X,B)$ such that $K_X+B\equiv M/Z$ for some $M\ge 0$.
Note that when $X\to Z$ is a semi-stable fibration over a curve and $B=0$, the theorem was 
proved much earlier by Kawamata [\ref{Kawamata}].\\

{\textbf{\sffamily{Remark on Mori fibre spaces.}}} 
Let $(X,B)$ be a projective klt pair of dimension $3$ over $k$ of char $p>5$ such that $K_X+B$ is not 
pseudo-effective. An important question is whether $(X,B)$ has a Mori fibre space. 
There is an ample $\R$-divisor $A\ge 0$ such that 
$K_X+B+A$ is pseudo-effective but $K_X+B+(1-\epsilon)A$ 
is not pseudo-effective for any $\epsilon>0$. Moreover, 
we may assume that $(X,B+A)$ is klt as well (\ref{l-ample-dlt}). By Theorem \ref{t-mmodel}, 
$(X,B+A)$ has a log minimal model $(Y,B_Y+A_Y)$. Since 
$K_Y+B_Y+A_Y$ is not big, $K_Y+B_Y+A_Y$ is numerically trivial on some covering 
family of curves by [\ref{CTX}](see also \ref{t-aug-b-non-big} below). 
Again by [\ref{CTX}], there is a nef reduction map $Y\bir T$ for $K_Y+B_Y+A_Y$
which is projective over the generic point of $T$. Although $Y\bir T$ is not necessarily a 
Mori fibre space but in some sense it is similar.\\

{\textbf{\sffamily{Finite generation, base point freeness, and contractions.}}} 
We will prove finite generation in the big case from which we can derive base point freeness 
and contractions of extremal rays in many cases. These are proved in Section \ref{s-fg}.

\begin{thm}\label{t-fg}
Let $(X,B)$ be a klt pair of dimension $3$ over $k$ of char $p>5$ and $X\to Z$ a projective contraction. 
Assume that $K_X+B$ is a $\Q$-divisor which is big$/Z$. 
Then the relative log canonical algebra $\mathcal{R}(K_X+B/Z)$ is finitely generated over $\mathcal{O}_Z$. 
\end{thm}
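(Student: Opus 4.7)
\medskip

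\noindent\textbf{Proof plan for Theorem \ref{t-fg}.} The strategy is to pass to a log minimal model and then reduce finite generation to a base point free/semi-ampleness statement for nef and big $\Q$-divisors. Since $K_X+B$ is big$/Z$, it is pseudo-effective$/Z$, so Theorem \ref{t-mmodel} produces a log minimal model $\phi:X\bir Y$ over $Z$, with $K_Y+B_Y:=\phi_\ast(K_X+B)$ nef$/Z$ and still big$/Z$. Each step of the associated LMMP is a divisorial contraction or a flip, both of which preserve the log canonical algebra, so
$$\mathcal{R}(K_X+B/Z)\cong \mathcal{R}(K_Y+B_Y/Z),$$
and it is enough to establish finite generation on $Y$.

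Next I would show $K_Y+B_Y$ is semi-ample$/Z$. Because it is big$/Z$, write $K_Y+B_Y\sim_{\Q} A+E$ with $A$ an ample$/Z$ $\Q$-divisor and $E\ge 0$. For $\epsilon>0$ sufficiently small, $(Y,B_Y+\epsilon E)$ remains klt, and
$$(1+\epsilon)(K_Y+B_Y)-(K_Y+B_Y+\epsilon E)\;=\;\epsilon A$$
is ample$/Z$, hence nef and big$/Z$. Applying the semi-ampleness theorem advertised in the abstract (semi-ampleness of a nef and big $\Q$-divisor $D$ when $D-(K+\Delta)$ is nef and big on a klt pair) to $D=(1+\epsilon)(K_Y+B_Y)$ with boundary $B_Y+\epsilon E$, we conclude $D$, and therefore $K_Y+B_Y$, is semi-ample$/Z$. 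This immediately yields finite generation: there is a projective contraction $f:Y\to W/Z$ with $K_Y+B_Y\sim_{\Q} f^\ast H$ for some ample$/Z$ $\Q$-divisor $H$ on $W$, and $\mathcal{R}(K_Y+B_Y/Z)$ agrees, up to clearing denominators, with the finitely generated section ring of $H$.

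The main obstacle is the semi-ampleness/base point free input used in the second step. In characteristic zero this would be a direct consequence of Kawamata--Viehweg vanishing, which is not available here; one must therefore substitute genuinely positive-characteristic tools. The standard replacement is Keel's theorem, which reduces semi-ampleness of a nef and big $D$ to the behavior of $D$ on its exceptional locus $\mathbb{E}(D)$, a lower dimensional subscheme where one can induct; combined with the surface base point free/semi-ampleness theorem (which relies crucially on the assumption $p>5$ to ensure that klt surface singularities are strongly $F$-regular) and $F$-singularity based lifting of sections. This chain of inputs, rather than the minimal-model reduction itself, is where all the char-$p$ difficulty is concentrated; the first step is formal once Theorem \ref{t-mmodel} is in hand.
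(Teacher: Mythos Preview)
Your reduction to a minimal model is correct and matches the paper's treatment of the relative case: once $(Y,B_Y)$ is a log minimal model over $Z$, semi-ampleness of $K_Y+B_Y$ over $Z$ gives finite generation, and the paper indeed handles general $Z$ by adding a large pullback of an ample divisor from $Z$ to reduce to the absolute case $Z=\Spec k$.

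However, your second step is circular as written. You propose to apply Theorem~\ref{t-bpf} (semi-ampleness of a nef and big $D$ with $D-(K_X+B)$ nef and big) to $D=(1+\epsilon)(K_Y+B_Y)$. But in this paper Theorem~\ref{t-bpf} is \emph{deduced from} Theorem~\ref{t-fg}: its proof literally reads ``$\mathcal{R}(K_X+B+A/Z)$ is finitely generated by Theorem~\ref{t-fg}''. So invoking \ref{t-bpf} here begs the question. Your final paragraph correctly flags that the substance lies in a Keel-type semi-ampleness argument, but the sketch you give (reduce to $\mathbb{E}(D)$, use surface abundance, lift sections via $F$-singularities) is not enough: on an arbitrary minimal model one has no a priori control over $\mathbb{E}(K_Y+B_Y)$ or over whether $\rddown{B_Y}$ meets it in the right way.

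What the paper actually does in the absolute case is an induction on the invariant $\theta(X,B,M)$, the number of components of an effective $M\sim_\Q K_X+B$ that are \emph{not} components of $\rddown{B}$ (Proposition~\ref{p-fg-1-4} and Lemma~\ref{l-fg-decrease}). One first arranges $(X,B)$ log smooth with $\rddown{B}\subset\Supp M\subseteq\Supp B$ and $M=A+D$ with $A$ ample; then one increases $B$ along $M$ until a new component enters $\rddown{B}$, dropping $\theta$. The heart is Lemma~\ref{l-fg-decrease}: if $(X,B+tC)$ has an lc model then so does $(X,B+(t-\epsilon)C)$. Here one passes to a carefully chosen $\Q$-factorial weak lc model $Y$, arranges that every connected component $\Sigma$ of $\mathbb{E}(K_Y+B_Y+(t-\epsilon)C_Y)$ either lies in $\rddown{B_Y}$ or already carries a semi-ample restriction, and then applies Theorem~\ref{t-sa-reduced-boundary-2} (which in turn rests on the connectedness principle \ref{t-connectedness-d-2} and Keel's gluing). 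This $\theta$-induction, together with the tracking of $\rddown{B}$ versus $\Supp M$ so that Keel's criterion becomes applicable, is the missing idea in your plan; it is the positive-characteristic replacement for base point freeness, and it must be proved \emph{before}, not after, Theorem~\ref{t-bpf}.
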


Assume that $Z$ is a point. 
If $K_X+B$ is not big, then ${R}(K_X+B/Z)$ is still finitely generated if 
$\kappa(K_X+B)\le 1$. 
It remains to show the finite generation when $\kappa(K_X+B)=2$: this can probably be 
reduced to dimension $2$ using an appropriate canonical bundle formula, for example as 
in [\ref{CTX}].

A more or less immediate consequence of the above finite generation is the following base point 
freeness.

\begin{thm}\label{t-bpf}
Let $(X,B)$ be a projective klt pair of dimension $3$ over $k$ of char $p>5$ and $X\to Z$ a projective contraction 
where $B$ is a $\Q$-divisor. Assume that $D$ is a $\Q$-divisor such that 
$D$ and $D-(K_X+B)$ are both nef and big$/Z$. 
Then $D$ is semi-ample$/Z$.  
\end{thm}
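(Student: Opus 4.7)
My plan is to derive Theorem \ref{t-bpf} from the finite generation result Theorem \ref{t-fg} via a perturbation argument, and then to pass from finite generation to semi-ampleness using Keel's criterion [\ref{Keel}] in positive characteristic.

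First, the goal is to produce a klt pair $(X, B_\lambda)$ with $K_X + B_\lambda \sim_\Q \lambda D$ for some positive integer $\lambda$. Since $N := D - (K_X + B)$ is nef and big$/Z$, for every rational $\lambda \geq 1$ the class $\lambda D - (K_X + B) = (\lambda - 1) D + N$ is nef and big$/Z$ (sum of a nef class and a nef and big class). By the relative Kodaira lemma, I write
\[ \lambda D - (K_X + B) \sim_\Q A_\lambda + E_\lambda \]
with $A_\lambda$ ample$/Z$ and $E_\lambda \geq 0$. Taking $\lambda$ sufficiently divisible, $A'_\lambda \in |m A_\lambda|$ a general member for $m$ large, and choosing $E_\lambda$ carefully within its linear equivalence class so that its coefficients along $\operatorname{Supp}(B)$ stay compatible with the klt condition (moving $E_\lambda$ by a general-position argument with respect to the augmented base locus of $D$), the pair $(X, B_\lambda := B + E_\lambda + A'_\lambda)$ is klt and satisfies $K_X + B_\lambda \sim_\Q \lambda D$.

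Second, I apply Theorem \ref{t-fg} to $(X, B_\lambda)$: since $\lambda D$ is big$/Z$, the divisor $K_X + B_\lambda$ is a big $\Q$-divisor, so $\mathcal{R}(K_X + B_\lambda/Z) = \mathcal{R}(\lambda D/Z)$ is finitely generated over $\mathcal{O}_Z$. A standard truncation/Veronese argument then yields finite generation of $\mathcal{R}(D/Z)$.

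Third, to pass from finite generation to semi-ampleness of the nef and big $D$, I invoke Keel's theorem [\ref{Keel}]: on a projective variety over a field of characteristic $p > 0$, a nef divisor $D$ is semi-ample iff its restriction to the exceptional locus $E(D) = \bigcup \{Z \subset X : D|_Z \text{ is not big}\}$ is semi-ample. Since $D$ is big$/Z$, $E(D)$ has relative dimension at most $2$ over $Z$, reducing the problem to semi-ampleness of a nef restriction on a surface or curve, which can be handled via classical surface theory in characteristic $p$ together with the finite generation already established. The main obstacle will be the construction of the klt pair $(X, B_\lambda)$ in the first step: the delicate point is to control the coefficients of an effective $\Q$-representative of $\lambda D - (K_X + B)$ along $\operatorname{Supp}(B)$ to preserve the klt condition, which is routine when the augmented base locus of $D$ does not meet $\operatorname{Supp}(B)$ but requires more care otherwise. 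The Keel step, while not trivial, is standard once finite generation is in hand.
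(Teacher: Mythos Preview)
Your approach is essentially the paper's: reduce to Theorem~\ref{t-fg} by constructing a klt pair whose log canonical divisor is $\Q$-linearly equivalent to (a multiple of) $D$. A few points, though.

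First, the parameter $\lambda$ is superfluous: take $\lambda=1$, so that you need $(X,B+A')$ klt with $A'\sim_\Q N:=D-(K_X+B)$. Your hand-wavy construction of $B_\lambda$ via the Kodaira decomposition $N\sim_\Q A+E$ is exactly what Lemma~\ref{l-ample-dlt} does cleanly: write $N\sim_\R G+D'$ with $G$ ample and $D'\ge 0$, absorb $\epsilon D'$ into $B$ (still klt for small $\epsilon$), and replace $N$ by the ample $(1-\epsilon)N+\epsilon G$; then a general member works by passing to a log resolution. Your worry about the augmented base locus meeting $\Supp B$ is handled precisely by this $\epsilon$-trick. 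So cite Lemma~\ref{l-ample-dlt} rather than re-deriving it loosely.

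Second, your Step~3 is both unnecessary and incomplete. Once $D\sim_\Q K_X+B'$ with $(X,B')$ klt and $D$ nef and big$/Z$, finite generation of $\mathcal{R}(D/Z)$ already gives semi-ampleness by a standard negativity argument: resolve the rational map to $Y=\Proj\,\mathcal{R}(D/Z)$ to get $\pi\colon W\to X$, $f\colon W\to Y$ with $m\pi^*D=f^*H+E$, $H$ ample$/Z$, $E\ge 0$ $f$-exceptional; then $E\equiv m\pi^*D$ over $Y$ is nef$/Y$ and $f_*(-E)=0$, so the negativity lemma (\ref{ss-negativity}) forces $E=0$. Alternatively, the proof of Theorem~\ref{t-fg} actually produces a semi-ample model, and since $D$ is already nef$/Z$, $X$ is that model. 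Your proposed route via Keel's $\mathbb{E}(D)$ reduction does not close: knowing $\mathcal{R}(D/Z)$ is finitely generated on $X$ says nothing directly about semi-ampleness of $D|_{\mathbb{E}(D)}$, and ``classical surface theory'' does not give semi-ampleness of an arbitrary nef divisor on a possibly non-normal surface in characteristic $p$. Drop Step~3 and invoke the negativity argument instead.
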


Assume that $Z$ is a point. When $D-(K_X+B)$ is nef and big but $D$ is nef with numerical 
dimension $\nu(D)$ one or two, 
semi-ampleness of $D$ is proved in [\ref{CTX}] under some restrictions on the coefficients.

\begin{thm}\label{t-contraction}
Let $(X,B)$ be a projective $\Q$-factorial dlt pair of dimension $3$ over $k$ of char $p>5$, and 
 $X\to Z$ a projective contraction. Let $R$ be a $K_X+B$-negative extremal ray$/Z$. 
Assume that there is a nef and big$/Z$ $\Q$-divisor $N$ such that $N\cdot R=0$. 
Then $R$ can be contracted by a projective morphism. 
\end{thm}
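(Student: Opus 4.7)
The plan is to apply the base point free Theorem \ref{t-bpf} to a suitably constructed $\Q$-Cartier divisor $D$ on $X$ whose associated fibration contracts exactly $R$.

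I first reduce to the klt case: replacing $B$ with $B' = B - \epsilon\lfloor B\rfloor$ for a small rational $\epsilon>0$, the pair $(X,B')$ becomes klt, $(K_X+B')\cdot R < 0$ still holds, and $N$ remains nef and big over $Z$. Next, fix an ample $\Q$-divisor $A/Z$ and set $\mu := -(K_X+B')\cdot\Gamma/A\cdot\Gamma > 0$ for a curve $\Gamma$ generating $R$. For $a \in \Q_{>0}$ to be chosen large, consider
\[
D := aN + (K_X+B') + \mu A.
\]
By construction $D\cdot R = 0$, and $D-(K_X+B') = aN + \mu A$ is ample (sum of a nef and an ample divisor), so in particular nef and big over $Z$. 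If I can verify that $D$ is nef with $D^{\perp}\cap\NEbar(X/Z) = \R_{\ge 0}R$, then Theorem \ref{t-bpf} applied to the klt pair $(X,B')$ and the divisor $D$ gives that $D$ is semi-ample over $Z$, and the Stein factorization of $\phi_{|mD|}$ for $m$ sufficiently divisible is the desired projective morphism contracting $R$.

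The core technical task is the verification of nefness of $D$ and identification of $D^{\perp}$. For curves $C$ with $N\cdot C > 0$, the discreteness of intersection numbers (since $N$ is $\Q$-Cartier, so $N\cdot C \in \frac{1}{n}\Z$) together with a bound $(K_X+B')\cdot C \ge -M\,A\cdot C$ (from ampleness of $MA+(K_X+B')$ for $M\gg 0$) gives $D\cdot C > 0$ for $a$ sufficiently large. The delicate case is curves inside the null locus of $N$, which is at most $2$-dimensional because $N$ is big. On an irreducible surface component $S$ of this locus, adjunction produces a dlt sub-pair $(S,B'_S)$, and the question becomes a surface-level analysis in char $p > 5$, where klt surface singularities are strongly $F$-regular. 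Using this together with the extremality of $R$ in $\NEbar(X/Z)$, one rules out other extremal rays of $\NEbar(X/Z)$ supported on the null locus and confirms nefness of $D$ along those curves, establishing $D^{\perp}\cap\NEbar(X/Z) = \R_{\ge 0}R$.

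The principal obstacle is precisely this surface-level analysis on the null locus of $N$: showing that $R$ is the unique extremal ray of $\NEbar(X/Z)$ in the face $N^{\perp}$ that lies in $D^{\perp}$, and that $D$ is nef along the curves contained in the null locus. This step is where the char $p > 5$ hypothesis enters in an essential way through surface-singularity theory and adjunction, much as in [\ref{HX}]. Once this is in place, the proof concludes by invoking Theorem \ref{t-bpf} to obtain semi-ampleness of $D$ and passing to the associated contraction.
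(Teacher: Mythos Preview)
Your overall strategy---reduce to klt, build a supporting nef and big $\Q$-divisor $D$ for $R$, and apply Theorem~\ref{t-bpf}---matches the paper's. The gap is in your verification that $D=aN+(K_X+B')+\mu A$ is nef with $D^\perp=R$.

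For curves $C$ with $N\cdot C>0$, your argument does not close: the bound $(K_X+B')\cdot C\ge -M\,A\cdot C$ gives $D\cdot C\ge a/n+(\mu-M)A\cdot C$, but $A\cdot C$ is unbounded over all curves, so no fixed $a$ suffices. For curves in the null locus of $N$, the appeal to ``surface-level analysis via adjunction'' is not a proof: you would need to show that the face $N^\perp\cap\overline{NE}(X/Z)$ contains only finitely many $(K_X+B')$-negative extremal rays, and nothing in your sketch establishes this. The reference to strong $F$-regularity of klt surface singularities is a red herring here; that input is used in \cite{HX} for existence of flips, not for this cone-type finiteness.

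The paper sidesteps both issues by quoting the cone-theoretic results already set up in \S\ref{ss-ext-rays}--\ref{ss-ext-rays-II} (which rest on Keel's Theorems~\ref{t-Keel-3} and~\ref{t-Keel-4}). After absorbing a multiple of $N$ and a pullback from $Z$ into the boundary to make $K_X+B+A$ globally big with $(K_X+B+A)\cdot R<0$, those results give \emph{directly} an ample $E$ with $L:=K_X+B+A+E$ nef and big and $L^\perp=R$; then Theorem~\ref{t-bpf} finishes. The characteristic hypothesis $p>5$ enters only through Theorem~\ref{t-bpf} (and the LMMP results underlying it), not through any surface adjunction in the construction of the supporting divisor. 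You should replace your ad hoc nefness argument by an invocation of \ref{ss-ext-rays-II}.
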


Note that if $K_X+B$ is pseudo-effective$/Z$, then for every $K_X+B$-negative extremal 
ray $R/Z$ there exists $N$ as in the theorem (see \ref{ss-ext-rays-II}). Therefore such extremal rays 
can be contracted by projective morphisms.

 Theorems  \ref{t-bpf} and \ref{t-contraction} have been proved by Xu [\ref{Xu}]   
independently and more or less at the same time but using a different approach. His proof also relies on our 
results on flips and minimal models.\\

{\textbf{\sffamily{Dlt and terminal models.}}}
The next two results are standard consequences of the LMMP (more precisely, of special termination).
They are proved in Section \ref{s-crepant-models}.

\begin{thm}\label{cor-dlt-model}
Let $(X,B)$ be an lc pair of dimension $3$ over $k$ of char $p>5$. 
Then $(X,B)$ has a (crepant) $\Q$-factorial dlt model. In particular, if $(X,B)$ 
is klt, then $X$ has a $\Q$-factorialization by a small morphism.
\end{thm}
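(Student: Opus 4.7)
The plan is to construct the $\Q$-factorial dlt model via an MMP on a log resolution, a well-known strategy in the characteristic zero setting that here is made possible by the results stated earlier in this paper. Take a log resolution $f\colon W\to X$ of $(X,B)$, with $f$-exceptional prime divisors $E_1,\dots,E_n$, and set
$$\Gamma = f^{-1}_*B + \sum_{i=1}^n E_i,$$
so that $(W,\Gamma)$ is log smooth, in particular $\Q$-factorial dlt. Since $(X,B)$ is lc, the discrepancies $a_i := a(E_i,X,B)$ satisfy $a_i\geq -1$, and
$$K_W + \Gamma = f^*(K_X+B) + F, \qquad F = \sum_i (a_i+1)\,E_i \geq 0,$$
is an effective $f$-exceptional divisor whose support consists precisely of those $E_i$ with $a_i>-1$.

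I would then run a $(K_W+\Gamma)$-MMP over $X$. Flips exist by Theorem \ref{t-flip-1} (in its relative form), and divisorial contractions are supplied by Theorem \ref{t-contraction} together with the usual construction of a supporting nef and big divisor on an extremal ray via MMP-with-scaling of an auxiliary ample divisor. Because $K_W+\Gamma \equiv F/X$ with $F$ effective and $f$-exceptional, every step of the MMP is $F$-negative: divisorial contractions remove components of $\Supp F$, and flips are centered inside a neighborhood of $\Supp F$. The main obstacle is termination. I would handle it by combining special termination in dimension three---which, granted termination for log surface MMP (classical), confines all but finitely many flips to the klt locus $W\setminus\lfloor\Gamma\rfloor$---with Theorem \ref{t-mmodel} applied to a small klt perturbation of $(W,\Gamma)$: the existence of a log minimal model for the perturbed pair over $X$, compared with our MMP-with-scaling sequence, forces the latter to terminate after finitely many additional steps since $F$ has only finitely many components and their coefficients are bounded.

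Suppose termination is achieved at $g\colon Y\to X$. Then $(Y,\Gamma_Y)$ is $\Q$-factorial dlt, $K_Y+\Gamma_Y$ is nef over $X$, and $K_Y+\Gamma_Y\equiv F_Y/X$. Since $F_Y$ is effective, $g$-exceptional and $g$-nef, the negativity lemma for birational morphisms forces $F_Y=0$; hence every $E_i$ with $a_i>-1$ has been contracted, while the lc places (those with $a_i=-1$) survive as coefficient-one components of $\Gamma_Y$. This yields $K_Y+\Gamma_Y = g^*(K_X+B)$, the desired crepant $\Q$-factorial dlt model. In the klt case every $a_i>-1$ by definition, so there are no lc places, all $f$-exceptional divisors are contracted during the MMP, and $g\colon Y\to X$ extracts no divisors---that is, it is small---yielding the claimed $\Q$-factorialization.
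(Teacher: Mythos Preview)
Your overall strategy—pass to a log resolution, add the reduced exceptional divisor, run an MMP over $X$, and conclude via the negativity lemma—is exactly the paper's, and your final paragraph deducing $F_Y=0$ and the small $\Q$-factorialization in the klt case is correct. The problem is that the ingredients you invoke to run and terminate the MMP are not available at this point in the paper; in fact they all logically depend on Theorem~\ref{cor-dlt-model} itself. The paper's order of proofs is: \ref{t-flip-1} (projective case), then \ref{cor-dlt-model}, then \ref{t-mmodel}, then \ref{t-flip-1} (quasi-projective case), and only much later \ref{t-contraction}. Since $(X,B)$ is merely quasi-projective, your appeal to \ref{t-flip-1} ``in its relative form'' is circular, and so are your uses of \ref{t-contraction} and \ref{t-mmodel}.

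The paper circumvents this as follows (Lemma~\ref{l-dlt-model-proj}). First projectivize $X\hookrightarrow\overline{X}$ and choose a reduced Cartier divisor $H$ on $\overline{X}$ containing $\overline{X}\setminus X$; take a log resolution $\overline{W}\to\overline{X}$ and run an LMMP$/\overline{X}$ on $K_{\overline{W}}+\Delta_{\overline{W}}$ where $\Delta_{\overline{W}}$ is the birational transform of $B+H$ plus the reduced exceptional divisor. Every flipping extremal ray is then a pl-extremal ray: its exceptional curves either lie over $\Supp H$ (hence inside $\rddown{\Delta_{\overline{W}}}$) or lie over the lc locus $X$, where $K_{\overline{W}}+\Delta_{\overline{W}}$ differs from the pullback of $K_X+B$ by an effective exceptional divisor supported in $\rddown{\Delta_{\overline{W}}}$. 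Pl-extremal rays are contracted projectively (\ref{ss-pl-ext-rays}), divisorial rays are handled by Lemma~\ref{l-div-ray}, and flips exist by the \emph{projective} case of \ref{t-flip-1}. Termination is immediate from special termination (\ref{p-st}) alone, because every step touches $\rddown{\Delta_{\overline{W}}}$; no appeal to \ref{t-mmodel} or to klt perturbations is needed. A second LMMP with scaling of the birational transform of $H$ then strips off $H$ and produces the dlt model over $X$. Note in particular that even in your setup $\Supp F\subseteq\rddown{\Gamma}$, so every ray is pl-extremal and special termination by itself already terminates the MMP—your detour through \ref{t-mmodel} is both circular and unnecessary.
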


The theorem was proved in [\ref{HX}, Theorem 6.1] for pairs with standard coefficients.

\begin{thm}\label{cor-terminal-model}
Let $(X,B)$ be a  klt pair of dimension $3$ over $k$ of char $p>5$. 
Then $(X,B)$ has a (crepant) $\Q$-factorial terminal model.
\end{thm}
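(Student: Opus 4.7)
The plan is to carry out the classical MMP construction of a terminal model on a log resolution, using Theorem \ref{t-mmodel} as the essential nontrivial input. First I would choose a log resolution $\pi\colon W\to X$ of $(X,B)$, available in dimension three over $k$. Since $(X,B)$ is klt, the set of prime divisors $F$ over $X$ with $a(F,X,B)\le 0$ is finite (a standard local fact about klt singularities), and I would arrange $\pi$ so that every such $F$ exceptional over $X$ appears on $W$. Let $E_1,\dots,E_m$ be the $\pi$-exceptional prime divisors and set $a_i:=a(E_i,X,B)>-1$. Define
\[
\Gamma_W \;:=\; \pi_{*}^{-1}B \;+\; \sum_{a_i\le 0}(-a_i)\,E_i \;+\; \sum_{a_i>0}\epsilon_i\,E_i,
\]
with small rational $\epsilon_i>0$. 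All coefficients of $\Gamma_W$ then lie in $[0,1)$, so $(W,\Gamma_W)$ is log smooth and klt, and a direct computation yields
\[
K_W+\Gamma_W \;=\; \pi^{*}(K_X+B)+F, \qquad F\;:=\;\sum_{a_i>0}(a_i+\epsilon_i)\,E_i\;\ge\;0,
\]
which is $\pi$-exceptional; in particular $K_W+\Gamma_W$ is pseudo-effective over $X$.

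Next, I would apply Theorem \ref{t-mmodel} to $(W,\Gamma_W)$ over $X$ to produce a log minimal model $g\colon X'\to X$ with $(X',\Gamma_{X'})$ $\Q$-factorial klt and $K_{X'}+\Gamma_{X'}$ nef over $X$. A step-by-step pushforward argument shows that the identity $K_{W_i}+\Gamma_i=\pi_i^{*}(K_X+B)+F_i$ with $F_i$ effective and exceptional over $X$ is preserved throughout the MMP, hence at the end $K_{X'}+\Gamma_{X'}=g^{*}(K_X+B)+F_{X'}$ with $F_{X'}\ge 0$ a $g$-exceptional and $g$-nef divisor. The negativity lemma forces $F_{X'}=0$, so $K_{X'}+\Gamma_{X'}=g^{*}(K_X+B)$ and $(X',\Gamma_{X'})$ is crepant to $(X,B)$.

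To verify terminality, I would argue that $W\bir X'$ contracts exactly the components of $F$, namely the $E_i$ with $a_i>0$. One direction is immediate from $F_{X'}=0$. For the other, any $(K+\Gamma)$-negative extremal ray over $X$ must be $F$-negative, so its supporting curves lie inside a single component of $F$; for a divisorial step the contracted divisor must coincide with that component, since otherwise every fiber would lie in the codimension-two intersection of two distinct prime divisors. Therefore the $E_i$ with $a_i\le 0$ persist as divisors on $X'$ (appearing in $\Gamma_{X'}$ with coefficient $-a_i$) and are not exceptional over $X'$, while the $E_i$ with $a_i>0$ become $g$-exceptional with discrepancy $a_i>0$ by crepancy. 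Any further prime divisor $F'$ exceptional over $X'$ is also exceptional over $W$, so by the choice of $\pi$ we have $a(F',X,B)>0$ and hence $a(F',X',\Gamma_{X'})>0$. This shows $(X',\Gamma_{X'})$ is a $\Q$-factorial terminal model.

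The main obstacle is the second step, namely producing the log minimal model of $(W,\Gamma_W)$ over $X$; in characteristic zero this is routine, whereas in our setting it is precisely the nontrivial content of Theorem \ref{t-mmodel}. The remaining ingredients --- resolution of singularities in dimension three, the finiteness of prime divisors with non-positive discrepancy over a klt pair, the negativity lemma, and the combinatorial analysis of which divisors a $(K+\Gamma)$-MMP contracts --- are local or purely formal and transfer from characteristic zero without modification.
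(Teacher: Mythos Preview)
Your approach is sound but takes a different route from the paper, and there is one presentational gap you should fix.

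The paper does not invoke Theorem \ref{t-mmodel} here; instead it applies Lemma \ref{l-extraction} with $\{D_i\}$ the (finite) set of exceptional divisors of log discrepancy $\le 1$. The proof of \ref{l-extraction} runs an explicit LMMP over $X$ on a log resolution, with a boundary in which the unwanted exceptional divisors carry coefficient $1$; every step is then a pl-contraction (\ref{ss-pl-ext-rays}), flips exist by Section \ref{s-flips}, and termination comes from special termination (\ref{p-st}). This is logically prior to \ref{t-mmodel} in the paper's ordering. Your argument uses \ref{t-mmodel} as a black box; there is no circularity, since the proof of \ref{t-mmodel} does not rely on \ref{cor-terminal-model}, but you are reversing the paper's order of dependence and resting on the heavier weak-Zariski-decomposition machinery of Section \ref{s-mmodels} rather than just special termination.

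The gap: Theorem \ref{t-mmodel} produces a log minimal model, not a sequence of MMP steps, so neither your ``step-by-step pushforward'' nor your extremal-ray analysis of which divisors get contracted is justified as written. Both conclusions, however, follow directly from the definition in \ref{ss-mmodels}. Since $(W,\Gamma_W)$ is klt, $W\dashrightarrow X'$ extracts no divisors; pushing forward gives $g_*\Gamma_{X'}=B$, so $K_{X'}+\Gamma_{X'}=g^*(K_X+B)+F_{X'}$ with $F_{X'}\ge 0$ $g$-exceptional and $g$-nef, hence $F_{X'}=0$ by negativity. This already forces every $E_i$ with $a_i>0$ to be contracted. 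Conversely, if some $E_j$ with $a_j\le 0$ were contracted, condition (4) of \ref{ss-mmodels} would give $a(E_j,W,\Gamma_W)<a(E_j,X',\Gamma_{X'})$, while crepancy gives $a(E_j,X',\Gamma_{X'})=a(E_j,X,B)=a(E_j,W,\Gamma_W)$, a contradiction. With this adjustment your terminality argument is complete.
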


The theorem was proved in [\ref{HX}, Theorem 6.1] for pairs with standard coefficients and 
canonical singularities.\\

{\textbf{\sffamily{The connectedness principle with applications to semi-ampleness.}}}
The next result concerns the Koll\'ar-Shokurov connectedness principle. 
In characteristic $0$, the surface case was proved by Shokurov by taking a resolution and 
then calculating intersection numbers [\ref{Shokurov}, Lemma 5.7]  
but the higher dimensional case was proved by Koll\'ar by deriving it from the Kawamata-Viehweg vanishing 
theorem [\ref{Kollar+}, Theorem 17.4]. 

\begin{thm}\label{t-connectedness-d-3}
Let $(X,B)$ be a projective $\Q$-factorial pair of dimension $3$ over $k$ of char $p>5$. 
Let $f\colon X\to Z$ be a birational contraction such that 
$-(K_X+B)$ is ample$/Z$. Then for any closed point $z\in Z$, the non-klt locus of 
$(X,B)$ is connected in any neighborhood of the fibre $X_z$.
\end{thm}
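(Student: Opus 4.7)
In characteristic $0$ this is Koll\'ar's connectedness principle: on a log resolution $\pi\colon Y\to X$ with $S=\lfloor B_Y\rfloor$, one uses Kawamata--Viehweg vanishing to show $R^1(f\pi)_*\mathcal O_Y(-S)=0$, so the exact sequence $0\to\mathcal O_Y(-S)\to\mathcal O_Y\to\mathcal O_S\to 0$ yields a surjection $\mathcal O_Z\twoheadrightarrow (f\pi)_*\mathcal O_S$ and hence fibrewise connectedness. Since Kawamata--Viehweg is unavailable to us, I propose to replace this input by an MMP argument built on the theorems established earlier in the paper.

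\emph{Step 1 (setup).} Using that log resolutions exist for $3$-folds, take $\pi\colon Y\to X$ resolving $(X,B)$ and write $K_Y+B_Y=\pi^*(K_X+B)$. Truncate any coefficients of $B_Y$ exceeding $1$ down to $1$ to obtain $B_Y''=\Delta+S$ with $S=\lfloor B_Y''\rfloor$ and $\Delta=\{B_Y''\}$; then $\pi(S)=\mathrm{Nklt}(X,B)$ and
\[ -(K_Y+B_Y'')\;=\;\pi^*A+E \]
for some effective $E$, where $A=-(K_X+B)$ is $f$-ample. Writing $g=f\circ\pi$, the divisor $\pi^*A$ is $g$-semi-ample and $g$-big, and the theorem reduces to proving that $g|_S\colon S\to Z$ has connected fibre over $z$ after shrinking $Z$ around $z$. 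Suppose for contradiction $S=S^0+S^1$ with both nonzero and $\pi(S^0)\cap\pi(S^1)=\emptyset$ in some neighborhood of $X_z$; shrink $Z$ so this holds globally.

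\emph{Step 2 (MMP).} For $0<\epsilon,\delta<1$ sufficiently small the pair $(Y,\Delta+(1-\epsilon)S^0+(1-\delta)S^1)$ is klt, and
\[ -(K_Y+\Delta+(1-\epsilon)S^0+(1-\delta)S^1)\;=\;\pi^*A+E+\epsilon S^0+\delta S^1, \]
which is $g$-big. I would run a $(K_Y+\Delta+(1-\epsilon)S^0+(1-\delta)S^1)$-MMP$/Z$ with scaling, using Theorem \ref{t-flip-1} for existence of flips and the techniques of Theorem \ref{t-mmodel} (together with special termination, underwritten by Theorem \ref{cor-dlt-model}) for termination. Each contracted or flipped extremal ray $R$ satisfies $(\pi^*A+E+\epsilon S^0+\delta S^1)\cdot R>0$; because $\pi^*A$ vanishes on the $g$-vertical $\pi$-exceptional curves, such $R$ must meet $E+\epsilon S^0+\delta S^1$. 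The assumption $\pi(S^0)\cap\pi(S^1)=\emptyset$ over a neighborhood of $z$ then forces every such $R$ to be supported near exactly one of $S^0$ or $S^1$, which is the mechanism preserving the disjointness through the MMP.

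\emph{Step 3 (contradiction).} On the output $Y^\circ$, apply Theorem \ref{t-bpf} to the $g$-nef-and-big divisor $(\pi^*A+E+\epsilon S^0+\delta S^1)^\circ$ with respect to the klt pair transform of $(Y,\Delta+(1-\epsilon)S^0+(1-\delta)S^1)$; this yields a semi-ample contraction over $Z$. Using that $A$ is $f$-ample on the fibres of $X\to Z$, the output semi-ample morphism must force curves meeting both $S^{0,\circ}$ and $S^{1,\circ}$ in the fibre over $z$, whose images in $X$ produce points of $\pi(S^0)\cap\pi(S^1)$ near $X_z$, contradicting our assumption. The main obstacle is the MMP bookkeeping: one must verify that through every flip and divisorial contraction the disjointness $\pi(S^0)\cap\pi(S^1)=\emptyset$ is genuinely preserved over a neighborhood of $z$, and that the semi-ample contraction produced by Theorem \ref{t-bpf} does exhibit the required intersection. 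This is precisely the step that substitutes for Koll\'ar's vanishing argument, and it leans crucially on the $f$-ampleness of $-(K_X+B)$ to localize every contracted extremal ray near $E\cup S^1$ (or $E\cup S^0$) and away from the opposite component.
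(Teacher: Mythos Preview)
Your overall strategy—replacing vanishing by an MMP over $Z$—is the right one, and it is what the paper does. But your execution has a genuine gap in Step~3, and the setup in Step~2 loses exactly the structure you need to close it.

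On the minimal model $Y^\circ$, the divisor $(\pi^*A+E+\epsilon S^0+\delta S^1)^\circ$ equals $-(K_{Y^\circ}+\Delta^\circ+(1-\epsilon)S^{0,\circ}+(1-\delta)S^{1,\circ})$, and since you have just made the latter nef$/Z$, the former is \emph{anti}-nef$/Z$, not nef-and-big. So Theorem~\ref{t-bpf} does not apply to it. Even if you instead apply semi-ampleness to $K_{Y^\circ}+\cdots$ itself (which is klt, nef and big$/Z$), you have not explained why the resulting contraction forces a curve through both $S^{0,\circ}$ and $S^{1,\circ}$; your final sentence simply asserts this. There is also a logical-order issue: in the paper \ref{t-bpf} is proved \emph{after} the connectedness theorems, via \ref{t-sa-reduced-boundary} which itself rests on surface connectedness, so invoking \ref{t-bpf} here requires care.

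The paper avoids all of this by \emph{not} perturbing the coefficients of $S$ down. It passes to a $\Q$-factorial dlt model $(Y,B_Y)$ of $(X,B)$ (so $\lfloor B_Y\rfloor$ is still present) and chooses a general $G_Y$ so that $(Y,B_Y+G_Y)$ is dlt and
\[
K_Y+B_Y+G_Y\ \sim_\R\ P_Y\,/Z,\qquad -P_Y\ge 0,\qquad \Supp(-P_Y)=\lfloor B_Y\rfloor.
\]
One then runs the LMMP$/Z$ on $K_Y+B_Y+G_Y$ and uses a plt argument to show the connected components of $\lfloor B_Y\rfloor$ over $z$ stay disjoint and none is contracted. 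On the minimal model $Y'$, $P_{Y'}$ is nef$/Z$ while $-P_{Y'}\ge 0$ with $\Supp(-P_{Y'})=\lfloor B_{Y'}\rfloor$; this forces the entire (connected) fibre $Y'_z$ set-theoretically into $\lfloor B_{Y'}\rfloor$, hence into a single connected component—contradiction. The point is that keeping $\lfloor B_Y\rfloor$ and arranging $\Supp(-P_Y)=\lfloor B_Y\rfloor$ is what produces the contradiction cleanly; by making your pair klt you have thrown this mechanism away and are left in Step~3 trying to reconstruct it without the tools to do so.
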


The theorem is proved in Section \ref{s-connectedness}. 
To prove it we use the LMMP rather than vanishing theorems. 
When $\dim X=2$, the theorem holds in a stronger form (see \ref{t-connectedness-d-2}). 

We will use the connectedness principle on surfaces to prove some semi-ampleness results 
on surfaces and $3$-folds. 
Here is one of them:

\begin{thm}\label{t-sa-reduced-boundary}
Let $(X,B+A)$ be a projective $\Q$-factorial dlt pair of dimension $3$ over $k$ of 
char $p>5$. Assume that $A,B\ge 0$ are $\Q$-divisors such that $A$ is ample and 
$(K_X+B+A)|_{\rddown{B}}$ is nef. Then $(K_X+B+A)|_{\rddown{B}}$ is semi-ample.
\end{thm}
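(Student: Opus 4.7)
The plan is to descend by successive adjunction from $X$ through the surface $S:=\rddown{B}$ to its boundary curves, replacing Kawamata--Viehweg vanishing by Keel's theorem [\ref{Keel}] and the surface connectedness principle (see \ref{t-connectedness-d-2}) for the gluing. First, since $(X,B+A)$ is dlt and $S\subseteq\rddown{B+A}$, each component of $S$ is a normal projective surface. Adjunction on each component gives $L:=(K_X+B+A)|_S = K_S + B_S + A|_S$ with $(S,B_S)$ dlt; crucially, $L - (K_S + B_S) = A|_S$ is ample, placing the problem in the scope of a surface base-point-free theorem.

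The heart of the argument is a char-$p$ base-point-free statement for dlt surfaces: if $(S,B_S)$ is projective dlt and $L$ is nef $\Q$-Cartier with $L-(K_S+B_S)$ ample, then $L$ is semi-ample provided $L|_{\rddown{B_S}}$ is semi-ample. I would prove this by adapting Koll\'ar's restriction-to-boundary argument: Keel's theorem reduces semi-ampleness of $L$ on $S$ to that of $L$ on its numerical exceptional locus $E_{\mathrm{num}}(L)$, and the surface connectedness principle \ref{t-connectedness-d-2} (combined with a Tanaka-type klt BPF away from the boundary) forces $E_{\mathrm{num}}(L)\subseteq\rddown{B_S}$.

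Granted this reduction, set $T=\rddown{B_S}$. Each component $C$ of $T$ is a smooth projective curve, and by adjunction $L|_C = K_C + \mathrm{Diff}_C$ with $\mathrm{Diff}_C\geq 0$. If $L\cdot C>0$, then $L|_C$ is ample; if $L\cdot C=0$, then $\deg(K_C+\mathrm{Diff}_C)=0$ forces $g(C)\leq 1$, and case analysis ($g=0$: any degree-zero line bundle is trivial; $g=1$: one gets $\mathrm{Diff}_C=0$ and $\omega_C\cong\mathcal{O}_C$) shows $L|_C$ is torsion. A final application of Keel's theorem to the one-dimensional scheme $T$ glues the torsion components to yield semi-ampleness of $L|_T$, completing the descent.

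The main obstacle is the surface dlt base-point-free statement in characteristic $p$: in characteristic zero it follows routinely from Kawamata--Viehweg vanishing, which is unavailable here. The surface connectedness principle developed in Section \ref{s-connectedness} is precisely designed to bridge this gap; once it is in place, the descent to curves and the one-dimensional gluing via Keel are essentially mechanical.
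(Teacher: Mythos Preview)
Your proposal misidentifies where the difficulty lies. If $\rddown{B}$ is \emph{irreducible}, then it is a normal projective surface $T$ (Lemma~\ref{l-plt-normal}), and adjunction gives $(K_X+B+A)|_T=K_T+B_T+A_T$ with $(T,B_T)$ dlt and $A_T$ ample; semi-ampleness then follows directly from Tanaka's abundance for surfaces [\ref{Tanaka}] --- no reduction to $\rddown{B_T}$, no descent to curves, no case analysis on genus is needed. So your entire ``surface base-point-free statement'' and the subsequent curve analysis are addressing a case that is already trivial.

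The actual content of the theorem is the \emph{reducible} case, where $\rddown{B}=S_1\cup\cdots\cup S_r$ with $r\ge 2$. Here $\rddown{B}$ is not a normal surface, so you cannot write $(K_X+B+A)|_{\rddown{B}}$ as $K_S+B_S+A|_S$ for a dlt pair and apply your surface argument. Knowing semi-ampleness on each $S_i$ separately does \emph{not} give semi-ampleness on the union: one must glue sections across the double curves $S_i\cap S_j$, and your final sentence about ``gluing torsion components via Keel on the one-dimensional scheme $T$'' does not address this (your $T=\rddown{B_S}$ lives on a single component, not across components). The paper's proof proceeds by induction on the number of components: assuming semi-ampleness on $T$ and on $S'=S-T$, it applies Keel [\ref{Keel}, Corollary~2.9], which requires the fibres of $S'\cap T\to Z$ to be connected, where $Z$ is the target of the semi-ample map from $T$. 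Since $S'\cap T=\rddown{B_T}$ set-theoretically and $-(K_T+B_T)$ is ample over $Z$, this connectedness is exactly what the surface connectedness principle \ref{t-connectedness-d-2} supplies. So the connectedness principle is used for gluing between surface components, not (as you suggest) to control $\mathbb{E}(L)$ on a single normal surface.
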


Note that if one could show that $\rddown{B}$ is semi-lc, then the result would follow from Tanaka
[\ref{Tanaka-2}].  In order to show 
that $\rddown{B}$ is semi-lc one needs to check that it satisfies the Serre condition $S_2$. 
In characteristic $0$ this is a consequence of Kawamata-Viehweg vanishing (see Koll\'ar [\ref{Kollar+}, Corollary 17.5]).
The $S_2$ condition can be used to glue sections on the various irreducible components of $\rddown{B}$. 
To prove the above semi-ampleness we instead use a result of Keel [\ref{Keel}, Corollary 2.9] to glue sections.\\

{\textbf{\sffamily{Log canonical thresholds.}}}
As in characteristic $0$, we will derive the following result from existence of 
$\Q$-factorial dlt models and boundedness results on Fano surfaces. 

\begin{thm}\label{t-ACC}
Suppose that $\Lambda\subseteq [0,1]$ and $\Gamma\subseteq
\mathbb{R}$ are DCC sets. Then the set
$$\{\lct(M,X,B)|\mbox{ $(X,B)$ is lc of dimension $\le 3$}\}$$

{\flushleft satisfies} the  ACC where $X$ is over $k$ with char $p>5$, 
the coefficients of $B$ belong to $\Lambda$, $M\ge 0$ is an 
$\R$-Cartier divisor with coefficients in $\Gamma$,  and $\lct(M,X,B)$ is the lc 
threshold of $M$ with respect to $(X,B)$.
\end{thm}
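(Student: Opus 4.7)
\medskip
\noindent\textbf{Proof plan.} The plan is to induct on $d := \dim X$, reducing ACC for lc thresholds to lower dimensions via adjunction on $\Q$-factorial dlt models. For $d = 1$ the lct at a point equals $(1-b)/m$ with $b \in \Lambda$, $m \in \Gamma$ (when $m > 0$); passing to subsequences along which $b$ and $m$ are monotone and using DCC forces the ratio $(1-b)/m$ to be eventually non-increasing, which rules out any strictly increasing sequence of such values.

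For $d \in \{2,3\}$, assume ACC in dimensions below $d$ and suppose, for a contradiction, that $t_i = \lct(M_i, X_i, B_i)$ is a strictly increasing sequence. Each pair $(X_i, B_i + t_i M_i)$ is lc but not klt. By Theorem \ref{cor-dlt-model}, I take a $\Q$-factorial dlt model $\phi_i \colon Y_i \to X_i$ with $K_{Y_i} + B_{Y_i} + t_i M_{Y_i} = \phi_i^*(K_{X_i} + B_i + t_i M_i)$, and pick a prime divisor $E_i \subset Y_i$ of coefficient $1$ in $B_{Y_i} + t_i M_{Y_i}$ (which exists because the pair is non-klt). Writing $c_i := \mathrm{coeff}_{E_i}(B_{Y_i})$ and $d_i := \mathrm{coeff}_{E_i}(M_{Y_i})$, one has the key identity $c_i + t_i d_i = 1$.

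If $E_i$ is the strict transform of a prime divisor of $X_i$, then $c_i \in \Lambda$ and $d_i \in \Gamma$, so $1-c_i$ is ACC, $d_i$ is DCC, and the relation $t_i = (1-c_i)/d_i$ (assuming $d_i > 0$) is eventually non-increasing on any subsequence making $c_i, d_i$ monotone, contradicting strict increase. Otherwise $E_i$ is $\phi_i$-exceptional or $d_i = 0$, and I apply dlt adjunction to $E_i \subset Y_i$: writing $K_{E_i} + \Theta_i + t_i N_i = (K_{Y_i} + B_{Y_i} + t_i M_{Y_i})|_{E_i}$ with $N_i := M_{Y_i}|_{E_i}$, Shokurov's formula for the different gives $\Theta_i$ with coefficients in a fixed DCC set determined by $\Lambda$, while $N_i \ge 0$ has coefficients in a DCC set depending only on $\Gamma$. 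Provided $t_i$ remains the lct of $N_i$ with respect to $(E_i, \Theta_i)$, the induction hypothesis in dimension $d-1$ produces the contradiction.

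The main obstacle is the exceptional subcase: when $E_i$ is $\phi_i$-exceptional the coefficient $c_i$ is not a priori in a DCC set, and one must also ensure that the lct truly descends to $(E_i, \Theta_i)$ rather than merely bounding it below. I would address both points by running an appropriate MMP on $Y_i$ over $X_i$, available thanks to Theorems \ref{t-flip-1}, \ref{t-contraction}, and \ref{t-mmodel}, to place $E_i$ in a Fano-type contraction, and then invoke boundedness of klt Fano surfaces in char $p>5$ to force the exceptional coefficient $c_i$ into a DCC set. Preservation of the lct under adjunction can be arranged by a small ample perturbation of $M_i$ (a standard trick once the MMP is available). Together, these steps reduce $d = 3$ to $d = 2$ and $d = 2$ to an essentially bounded Fano-surface problem, where the DCC/ACC analysis of coefficients finishes the argument.
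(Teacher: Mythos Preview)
Your plan has a genuine gap in the exceptional case, and the proposed fixes do not close it. After adjunction to $E_i$ you obtain an lc pair $(E_i,\Theta_i+t_iN_i)$, but you only get $\lct(N_i,E_i,\Theta_i)\ge t_i$, not equality; an ACC bound on these restricted thresholds does not contradict $t_i$ being strictly increasing. Your two remedies do not help: running an MMP on $Y_i$ over $X_i$ cannot ``force $c_i$ into a DCC set'' because $c_i=1-a(E_i,X_i,B_i)$ is a discrepancy invariant, unchanged by any birational modification of $Y_i/X_i$, and boundedness of Fano surfaces says nothing about how $E_i$ sits inside $Y_i$. The tie-breaking perturbation of $M_i$ by a small ample destroys the DCC hypothesis on the coefficients of $M_i$, and in characteristic $p$ you do not have the vanishing/inversion-of-adjunction machinery that makes the de~Fernex--Ein--Musta\c{t}\u{a} style ``lct descends to a minimal center'' argument go through.

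The paper does \emph{not} reduce ACC for lc thresholds in dimension $3$ to ACC for lc thresholds in dimension $2$. Instead it reduces to a different statement, the \emph{global ACC} for surfaces (Proposition~\ref{p-ACC-global}): if $(S,\Delta)$ is lc of dimension $2$ with $K_S+\Delta\equiv 0/Z$, $\dim S>\dim Z$, and the coefficients of $\Delta$ lie in a DCC set, then the coefficients of the horizontal$/Z$ components lie in a \emph{finite} set. The point you are missing is that once $E_i$ is exceptional over $X_i$ on a crepant dlt model, the pullback relation gives $K_{E_i}+\Delta_{E_i}\equiv 0$ over the Stein factorisation $Z_i$ of $E_i\to X_i$. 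By Proposition~\ref{p-adjunction-DCC} the coefficients of $\Delta_{E_i}$ lie in a DCC set built from $\Lambda\cup\{t_i\gamma:\gamma\in\Gamma\}$; choosing $E_i$ to meet (and map into) $\Supp M_i$, some horizontal coefficient has the form $\tfrac{m-1}{m}+\sum\tfrac{l_jb_j}{m}+t_i\sum\tfrac{l_j'\gamma_j}{m}$ with the last sum positive, hence these coefficients are DCC but not ACC as $t_i$ strictly increases. This contradicts global ACC. The substantial work is proving Proposition~\ref{p-ACC-global} itself (via surface MMP, Construction~\ref{rem-Fano-twist}, Lemma~\ref{l-lim-nefness}, and Alexeev's boundedness), and that is where boundedness of $\epsilon$-lc del Pezzo surfaces actually enters; it is not used to control $c_i$.
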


With some work it seems that using the above ACC one can actually prove termination for those lc pairs 
$(X,B)$ of dimension $3$ such that $K_X+B\equiv M$ for some $M\ge 0$ following the ideas in [\ref{B-acc}].
But we will not pursue this here.\\

{\textbf{\sffamily{Numerically trivial family of curves in the non-big case.}}}
We will also give a somewhat different proof of the following result which was proved 
by Cascini-Tanaka-Xu [\ref{CTX}] in char $p$. This was also proved independently by M$^{\rm c}$Kernan much earlier but unpublished. 
He informed us that his proof was inspired by [\ref{KMM}]. 

\begin{thm}\label{t-aug-b-non-big}
Assume that $X$ is a normal projective variety of dimension $d$ over an algebraically 
closed field (of any characteristic), 
and that $B,A\ge 0$ are $\R$-divisors. 
Moreover, suppose $A$ is nef and big and $D=K_X+B+A$ is nef. If $D^d=0$, then  
for each general closed point $x\in X$ there is a rational curve $L_x$ passing through 
$x$ with $D\cdot L_x=0$. 
\end{thm}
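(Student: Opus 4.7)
The plan is a Kodaira-lemma reduction to ample $A$, then extraction of a covering family of $D$-trivial curves from the hypothesis $D^d=0$, and finally a Mori bend-and-break to pass from these curves to rational ones.

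First, since $A$ is nef and big, Kodaira's lemma gives $A\sim_\R H+E$ with $H$ ample and $E\ge 0$. A general closed point $x\in X$ lies off $\Supp E$, so after replacing $B$ by $B+E$ and $A$ by $H$ we may assume $A$ is ample without changing the class of $D$ or the conclusion for such $x$.

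Next, we want a covering family of irreducible curves $\{C_s\}$ on $X$ with $D\cdot C_s=0$. The condition $D^d=0$ together with $D$ nef says that the numerical dimension of $D$ is strictly less than $d$. In characteristic zero this yields the nef reduction map $\phi\colon X\dashrightarrow T$ of $D$: general fibres $F$ of $\phi$ are positive-dimensional with $D|_F\equiv 0$, and intersecting $F$ with $\dim F-1$ general members of $|mA|$ through $x$ gives an irreducible curve $C$ through $x$ with $D\cdot C=0$, $A\cdot C>0$ and, for generic choices with $C\not\subset\Supp B$, $-K_X\cdot C=B\cdot C+A\cdot C>0$. In positive characteristic a suitable substitute for this covering family is constructed in [\ref{CTX}].

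Finally, let $f\colon\widetilde C\to X$ be the normalisation of such a $C$ through $x$, and $g=g(\widetilde C)$. In characteristic $p$, precomposing with the $e$-th Frobenius $F^e\colon\widetilde C\to\widetilde C$ gives $(f\circ F^e)_*[\widetilde C]=p^e f_*[\widetilde C]$, hence $-K_X\cdot(f\circ F^e)_*[\widetilde C]=p^e(-K_X)\cdot f_*[\widetilde C]\to\infty$ while the genus remains $g$. Euler-characteristic estimates then make the space of deformations of $f\circ F^e$ fixing a preimage of $x$ positive-dimensional for $e\gg 0$, and the standard specialisation argument breaks off a rational curve $L\ni x$ in $X$; in characteristic zero we spread out and reduce modulo $p$. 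Since $L$ is an irreducible component of a degeneration of $(f\circ F^e)_*[\widetilde C]$, the remaining components are effective, and $D$ is nef with $D\cdot(f\circ F^e)_*[\widetilde C]=0$, we conclude $D\cdot L=0$. The main obstacle is the middle step: extracting a covering family of $D$-trivial irreducible curves in positive characteristic, where the countable-union construction of the nef reduction map over $\C$ is not directly available. This is precisely the role played by the Cascini-Tanaka-Xu result cited as [\ref{CTX}].
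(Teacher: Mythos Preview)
Your approach diverges from the paper's and, as you yourself acknowledge, leans on [\ref{CTX}] for the crucial middle step in positive characteristic. The problem is that Theorem \ref{t-aug-b-non-big} \emph{is} the Cascini--Tanaka--Xu result: the paper explicitly states it is giving ``a somewhat different proof'' of the theorem already established in [\ref{CTX}]. Invoking [\ref{CTX}] to supply the covering family of $D$-trivial curves is therefore circular in this context; you are assuming what is to be proved (or at any rate its essential content). Your characteristic-zero route via the nef reduction map is plausible on its own, but the point of this section is a self-contained argument valid in all characteristics, independent of [\ref{CTX}].

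The paper avoids the need for any pre-existing $D$-trivial family entirely. It first proves an elementary intersection-theoretic lemma (Lemma \ref{l-movable-curve}): if $P$ is nef with $P^d=0$ and $A$ is ample, then for any $\epsilon>0$ there exist $\delta\in[0,\epsilon]$ and a very ample $H$ with $(P-\delta A)\cdot H^{d-1}=0$. Applying this with $P=(1+\alpha)D$ and the ample divisor $A+\alpha D$ (for a free parameter $\alpha>0$) produces, through each general $x$, an irreducible complete-intersection curve $C_x$ in the smooth locus with $(K_X+B+t(A+\alpha D))\cdot C_x=0$ for some $t$ close to $1$. Then $K_X\cdot C_x<0$, and Koll\'ar's bend-and-break yields a rational curve $L_x\ni x$ with $(A+\alpha D)\cdot L_x\le 2d/t<3d$. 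Since $A$ is ample and $A\cdot L_x<3d$, the curves $L_x$ lie in a bounded family \emph{independent of $\alpha$}, so $D\cdot L_x$ takes only finitely many values; sending $\alpha\to\infty$ in the bound $(A+\alpha D)\cdot L_x<3d$ then forces $D\cdot L_x=0$. The key idea you are missing is that one never needs $D\cdot C_x=0$ at the outset --- the parameter $\alpha$ and a discreteness argument do all the work at the end.
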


The theorem is independent of the rest of this paper. Its proof is an application of the 
bend and break theorem.\\

{\textbf{\sffamily{Some remarks about this paper.}}}
In writing this paper we have tried to give as much details as possible even 
if the arguments are very similar to the characteristic $0$ case. This is for convenience,  
future reference, and to avoid any unpleasant surprise having to do with positive characteristic. 
The main results are proved in the following order: \ref{t-flip-1} in the projective case, 
\ref{cor-dlt-model}, \ref{cor-terminal-model}, \ref{t-mmodel}, \ref{t-flip-1} in general, 
\ref{t-connectedness-d-3}, 
\ref{t-sa-reduced-boundary}, \ref{t-fg}, \ref{t-bpf}, \ref{t-contraction}, \ref{t-ACC}, and
\ref{t-aug-b-non-big}.\\

{\textbf{\sffamily{Acknowledgements.}}}
Part of this work was done when I visited National Taiwan University in September 2013 with the support of the 
Mathematics Division (Taipei Office) of the National Center for Theoretical Sciences. The visit was arranged by Jungkai A. Chen. 
I would like to thank them for their hospitality. 
This work was partially supported by a Leverhulme grant.
I would also like to thank Paolo Cascini,  Christopher Hacon, Janos Koll\'ar, James M$^{\rm{c}}$Kernan, 
Burt Totaro and Chenyang Xu for their 
comments and suggestions. Finally I would like to thank the referee for valuable corrections 
and suggestions.

\section{Preliminaries}

We work over an algebraically closed field $k$ of characteristic $p>0$ fixed throughout the paper 
unless stated otherwise.

\subsection{Contractions}\label{ss-contractions} 
A \emph{contraction} $f\colon X\to Z$ of algebraic spaces over $k$ is a proper morphism such that 
$f_*\mathcal{O}_X=\mathcal{O}_Z$. 
When  $X,Z$ are quasi-projective varieties over $k$ and $f$ is 
projective, we refer to $f$ as a \emph{projective contraction} to avoid confusion.

Let $f\colon X\to Z$ be a projective contraction of normal varieties.
We say $f$ is \emph{extremal} if the relative Kleiman-Mori cone of curves $\overline{NE}(X/Z)$ is one-dimensional.
Such a contraction is a \emph{divisorial contraction} if it is birational and it contracts
some divisor. It is called a \emph{small contraction} if it is birational and it 
contracts some subvariety of codimension $\ge 2$ but no divisors. 

Let $f\colon X\to Z$ be a small contraction and  
$D$ an $\R$-Cartier divisor such that $-D$ is ample$/Z$. We refer to $f$ as a 
\emph{$D$-flipping contraction} or just a flipping contraction for short.
We say the \emph{$D$-flip} of $f$
exists if there is a small contraction $X^+\to Z$ such that the birational transform 
$D^+$ is ample$/Z$.

\subsection{Some notions related to divisors}\label{ss-divisors}
Let $X$ be a normal projective variety over $k$ and $L$ a nef $\R$-Cartier 
divisor. We define $L^\perp:=\{\alpha\in \overline{NE}(X) \mid L\cdot \alpha=0\}$. 
This is an extremal face of $\overline{NE}(X)$ cut out by $L$.

Let $f\colon X\to Z$ be a projective morphism of normal varieties over $k$, and 
let $D$ be an $\R$-divisor on $X$. We define the algebra of $D$ over $Z$ 
as $\mathcal{R}(D/Z)=\bigoplus_{m\in\Z^{\ge 0}} f_*\mathcal{O}_X(\rddown{mD})$.
When $Z$ is a point we denote the algebra by $R(D)$.
When $D=K_X+B$ for a pair $(X,B)$ we call the algebra the \emph{log canonical algebra} 
of $(X,B)$ over $Z$.

Now let $\phi\colon X\bir Y$ be a birational map of normal projective varieties over $k$ whose inverse does 
not contract divisors. Let $D$ be an $\R$-Cartier divisor on $X$ such that $D_Y:=\phi_*D$ is 
$\R$-Cartier too. We say that $\phi$ is 
\emph{$D$-negative} if there is a common resolution $f\colon W\to X$ and $g\colon W\to Y$ 
such that $f^*D-g^*D_Y$ is effective and exceptional$/Y$, and its support contains the birational transform of 
all the prime divisors on $X$ which are contracted$/Y$.

\subsection{The negativity lemma}\label{ss-negativity}
The negativity lemma states that if $f\colon Y\to X$ is a projective birational 
contraction of normal quasi-projective varieties over $k$ and $D$ is an $\R$-Cartier 
divisor on $Y$ such that $-D$ is nef$/X$ and $f_*D\ge 0$, then $D\ge 0$ 
(since this is a local statement over $X$, it also holds if we assume 
$X$ is an algebraic space and $f$ is proper). See 
[\ref{Shokurov}, Lemma 1.1] for the characteristic $0$ case. The proof there also works 
in char $p>0$ and we reproduce it for convenience. 
Assume that the lemma does not hold. We reduce the problem to the surface case.  
Let $P$ be the image of the negative components of $D$. If  $\dim P>0$, 
we take a general hypersurface section $H$ on $X$, let $G$ be the normalization 
of the birational transform of $H$ on $Y$ and reduce the problem to the contraction 
$G\to H$ and the divisor $D|_G$. But if $\dim X>2$ and $\dim P=0$, we take a general hypersurface 
section $G$ on $Y$, let $H$ be the normalization of $f(G)$, and reduce 
the problem to the induced contraction $G\to H$ and divisor $D|_G$. So we can reduce the 
problem to the case when $X,Y$ are surfaces, $P$ is just one point, and $f$ is an isomorphism 
over $X\setminus \{P\}$. 
Taking a resolution enables us to assume $Y$ is smooth. Now let $E\ge 0$ be a divisor whose support is 
equal to the exceptional locus of $f$ and such that $-E$ is nef$/X$: pick a Cartier divisor $L\ge 0$ 
 passing through $P$ and write $f^*L=L^\sim+E$ where $L^\sim$ is the birational 
transform of $L$; then $E$ satisfies the requirements. Let $e$ be the smallest 
number such that $D+eE\ge 0$. Now there is a component $C$ of $E$ whose coefficient 
in $D+eE$ is zero and that $C$ intersects $\Supp (D+eE)$. But then 
$(D+eE)\cdot C>0$, a contradiction.

\subsection{Resolution of singularities}\label{ss-resolution}
Let $X$ be a quasi-projective variety of dimension $\le 3$ over $k$ and $P\subset X$ 
a closed subset. Assume that 
there is an open set $U\subset X$ such that $P\cap U$ is a divisor 
with simple normal crossing (snc) singularities. Then there is a \emph{log resolution} 
of $X,P$ which is an isomorphism over $U$, that is, there is a projective 
birational morphism $f\colon Y\to X$ such that the union of the exceptional locus of $f$  
and the birational transform of $P$ is an snc divisor, and   
$f$ is an isomorphism over $U$. This follows from Cutkosky [\ref{Cut-sing}, Theorems 1.1, 1.2, 1.3] 
when $k$ has char $p>5$, and from Cossart-Piltant [\ref{CP-sing}, Theorems 4.1, 4.2][\ref{CP-sing-2}, Theorem] 
in general (see also [\ref{HX}, Theorem 2.1]).

\subsection{Pairs}\label{ss-pairs} 
A \emph{pair} $(X,B)$ consists of a normal quasi-projective variety $X$ over $k$  
and an \emph{$\R$-boundary} $B$, that is an $\R$-divisor $B$ on $X$ with coefficients in $[0,1]$, 
such that $K_X+B$ is $\mathbb{R}$-Cartier. When $B$ has rational coefficients we 
say $B$ is a \emph{$\Q$-boundary} or say $B$ is \emph{rational}.
We say that $(X,B)$ is \emph{log smooth} if $X$ is smooth 
and $\Supp B$ has simple normal crossing singularities.

Let $(X,B)$ be a pair. For a prime divisor $D$ on some birational model of $X$ with a
nonempty centre on $X$, $a(D,X,B)$ denotes the \emph{log discrepancy} which is defined 
by taking a projective birational morphism $f\colon Y\to X$ from a normal variety 
containing $D$ as a prime divisor and putting $a(D,X,B)=1-b$ where $b$ is the 
coefficient of $D$ in $B_Y$ and $K_Y+B_Y=f^*(K_X+B)$.  

 As in characteristic $0$, we can define various types of singularities using 
log discrepancies. Let $(X,B)$ be a pair.
We say that the pair is \emph{log canonical} or lc for short (resp. \emph{Kawamata log terminal} or {klt} 
for short) if 
$a(D,X,B)\ge 0$ (resp. $a(D,X,B)>0$) for any prime divisor $D$ on birational models of $X$. 
An \emph{lc centre} of $(X,B)$ is the image in $X$ of a $D$ with $a(D,X,B)=0$.
The pair $(X,B)$ is \emph{terminal} if $a(D,X,B)>1$ for any prime divisor $D$ on birational models of $X$ which is 
exceptional$/X$ (such pairs are sometime called terminal in codimension $\ge 2$).
On the other hand, we say that $(X,B)$ is \emph{dlt} if there is a closed subset $P\subset X$ 
such that $(X,B)$ is log smooth 
outside $P$ and no lc centre of $(X,B)$ is inside $P$. In particular, 
the lc centres of $(X,B)$ are exactly the components of $S_1\cap \cdots \cap S_r$ where $S_i$ 
are among the components of $\rddown{B}$. Moreover, there is a log resolution 
$f\colon Y\to X$ of $(X,B)$ such that $a(D,X,B)>0$ for any prime divisor $D$ on $Y$ 
which is exceptional$/X$, eg take a log resolution $f$ 
which is an isomorphism over $X\setminus P$.  
Finally, we say that $(X,B)$ is \emph{plt} if it is dlt and each connected component of 
$\rddown{B}$ is irreducible. In particular, the only lc centres of $(X,B)$ are 
the components of $\rddown{B}$.

\subsection{Ample divisors on log smooth pairs}\label{ss-log-smooth}
Let $(X,B)$ be a projective log smooth pair over $k$ and let $A$ be an ample $\Q$-divisor. 
We will argue that there is $A'\sim_\Q A$ such that $A'\ge 0$ and that 
$(X,B+A')$ is log smooth. The argument was suggested to us by several people 
independently. We may assume that $B$ is reduced. Let $S_1,\dots, S_r$ 
be the components of $B$ and let $\mathcal{S}$ be the set of the components 
of $S_{i_1}\cap \cdots \cap S_{i_n}$ for all the choices 
$\{i_1,\dots,i_n\}\subseteq  \{1,\cdots,r\}$. By Bertini's theorem, 
there is a sufficiently divisible integer $l>0$ such that for any $T\in\mathcal{S}$, a general element of 
$|lA|_T|$ is smooth. Since $lA$ is sufficiently ample, such general elements 
are restrictions of general elements of $|lA|$. Therefore, we can choose a 
general $G\sim lA$ such that $G$ is smooth and $G|_T$ is smooth for any $T\in \mathcal{S}$. 
This means that $(X,B+G)$ is log smooth. Now let $A'=\frac{1}{l}G$.

\subsection{Models of pairs}\label{ss-mmodels} 
Let $(X,B)$ be a pair and $X\to Z$ a projective contraction over $k$.
A pair $(Y,B_Y)$ with a projective contraction $Y\to Z$ and a birational map
$\phi\colon X\bir Y/Z$ is a \emph{log birational model} of $(X,B)$ 
if  $B_Y$ is the sum of the birational transform of $B$ 
and the reduced exceptional divisor of $\phi^{-1}$. 
We say that $(Y,B_Y)$ is a \emph{weak lc model} of $(X,B)$ over $Z$ if in addition\\\\
(1) $K_Y+B_Y$ is nef/$Z$.\\
(2) for any prime divisor $D$ on $X$ which is exceptional/$Y$, we have
$$
a(D,X,B)\le a(D,Y,B_Y)
$$

And  we call $(Y,B_Y)$ a \emph{log minimal model} of $(X,B)$ over $Z$ if in addition\\\\
(3) $(Y,B_Y)$ is $\Q$-factorial dlt,\\
(4) the inequality in (2) is strict.\\

When $K_X+B$ is big$/Z$, the \emph{lc model} of $(X,B)$ over $Z$ is a weak lc model $(Y,B_Y)$
over $Z$ with $K_Y+B_Y$ ample$/Z$.

On the other hand, a log birational model $(Y,B_Y)$ of $(X,B)$ is called a 
\emph{Mori fibre space} of $(X,B)$ over $Z$ if there is a $K_Y+B_Y$-negative extremal projective 
contraction $Y\to T/Z$, and if for any prime divisor $D$ on birational models of $X$ we have
$$
a(D,X,B)\le a(D,Y,B_Y)
$$
with strict inequality if $D\subset X$ and if it is exceptional/$Y$,

Note that the above definitions are slightly different from the traditional definitions. However, 
if $(X,B)$ is plt (hence also klt) the definitions coincide.

Let $(X,B)$ be an lc pair over $k$. A $\Q$-factorial dlt pair $(Y,B_Y)$ is 
a \emph{$\Q$-factorial dlt model} of $(X,B)$ if there is a projective birational 
morphism $f\colon Y\to X$ such that $K_Y+B_Y=f^*(K_X+B)$ and such that every exceptional prime
divisor of $f$ has coefficient $1$ in $B_Y$. On the other hand, 
when $(X,B)$ is klt, a pair $(Y,B_Y)$ with terminal singularities 
is a \emph{terminal model} of $(X,B)$ if there is a projective birational 
morphism $f\colon Y\to X$ such that $K_Y+B_Y=f^*(K_X+B)$.

\subsection{Keel's results}\label{ss-Keel} 
We recall some of the results of Keel which will be used in this paper. 
For a nef $\Q$-Cartier divisor $L$ on a projective scheme $X$ over $k$, the 
\emph{exceptional locus} $\mathbb{E}(L)$ is the union of those 
positive-dimensional integral subschemes $Y\subseteq X$ such that $L|_Y$ is not big, i.e. $(L|_Y)^{\dim Y}=0$. 
By [\ref{CMM}], $\mathbb{E}(L)$ coincides with the augmented base locus ${\bf{B_+}}(L)$.
We say $L$ is \emph{endowed with a map} $f\colon X\to V$, where $V$ is an algebraic space over $k$, 
if: an integral subscheme $Y$ is contracted by $f$ (i.e. $\dim Y>\dim f(Y)$) 
if and only if $L|_Y$ is not big.

\begin{thm}[{[\ref{Keel}, 1.9]}]\label{t-Keel-1}
Let $X$ be a projective scheme over $k$ and $L$ a nef $\Q$-Cartier divisor on $X$. 
Then 

$\bullet$ $L$ is semi-ample if and only if $L|_{\mathbb{E}(L)}$ is semi-ample;

$\bullet$ $L$ is endowed with a map if and only if $L|_{\mathbb{E}(L)}$ is endowed with a map.
\end{thm}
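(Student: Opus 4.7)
Both biconditionals in the statement have trivial ``only if'' directions, since both semi-ampleness and the property of being endowed with a map are preserved under restriction to a closed subscheme (composing the map for $L$ with the inclusion $\mathbb{E}(L) \hookrightarrow X$). I focus on the nontrivial ``if'' direction of the first bullet; the second bullet is parallel in structure, substituting Artin's representability theorem for algebraic spaces in place of the construction of a contraction from a base point free linear system.

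\textbf{Setup and Frobenius.} Write $E = \mathbb{E}(L)$, and assume $L|_E$ is semi-ample. I would argue by induction on $\dim X$, and reduce to the case where $L$ is big on $X$: if $L$ is not big then $E = X$ and there is nothing to prove. After passing to a positive multiple we may assume $L$ is Cartier and $L|_E$ is base point free. To show $L^{\otimes m}$ is globally generated for some $m$ it suffices to lift sections along
\begin{equation*}
0 \to \mathcal{I}_E \otimes L^{\otimes m} \to L^{\otimes m} \to L^{\otimes m}|_E \to 0,
\end{equation*}
the obstruction to which lives in $H^1(X, \mathcal{I}_E \otimes L^{\otimes m})$. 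In characteristic zero one would finish by Kawamata--Viehweg vanishing; in characteristic $p$ I would substitute the iterated Frobenius $F^e$. The crucial input is that $L$ is big on $X \setminus E$: this means that for $m \gg 0$ one can decompose $L^{\otimes m p^e}$ as an ample class plus an effective class supported on $E$. That decomposition implies that the image of a given obstruction class under $F^{e\ast}$, composed with the natural inclusion $\mathcal{I}_E^{[p^e]} \subset \mathcal{I}_E$, eventually lands in a Serre-vanishing range and so becomes zero for $e$ large. This yields lifting of the sections after replacing $m$ by $m p^e$, and combined with the bigness of $L$ away from $E$ gives base point freeness of $L^{\otimes m p^e}$ on all of $X$.

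\textbf{EWM and the main obstacle.} For the second bullet the plan is parallel: given a contraction $g : E \to V_0$ endowing $L|_E$ with a map, one constructs $f : X \to V$ extending $g$ by applying Artin's contractibility theorem to the equivalence relation on $X$ that agrees with the fibres of $g$ on $E$ and is the diagonal off $E$. The algebraic-space representability of the quotient reduces to the production of enough functions/sections in the formal neighbourhood of $E$, which is exactly the Frobenius-driven lifting of the first part. The principal obstacle throughout is the absence of Kawamata--Viehweg vanishing: the whole argument stands or falls on showing that the $H^1$-obstruction is annihilated by a bounded power of Frobenius, which in turn relies on a delicate interaction between $F^{e\ast}$, the ideal sheaf $\mathcal{I}_E$, and the ample-plus-effective decomposition of $L^{\otimes N}$ for large $N$. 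This ``Frobenius in lieu of Kawamata--Viehweg'' mechanism is the heart of Keel's proof and the technical step I expect to take essentially all of the work.
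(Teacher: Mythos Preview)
The paper does not prove this theorem at all: it is quoted verbatim from Keel's paper (the citation [\ref{Keel}, 1.9] in the statement header) and used as a black box throughout Sections \ref{s-ext-rays}--\ref{s-fg}. There is therefore no ``paper's own proof'' to compare your proposal against.

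For what it is worth, your sketch does track the broad strategy of Keel's original argument --- reduction to the big case, the short exact sequence for $\mathcal{I}_E\otimes L^{\otimes m}$, and killing the $H^1$-obstruction via iterated Frobenius in lieu of Kawamata--Viehweg --- so if you were asked to reproduce Keel's proof you are on the right track. But as a review of how \emph{this} paper handles the statement, the answer is simply that it does not: the result is imported, not proved.
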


The theorem does not hold if $k$ is of characteristic $0$. When $L|_{\mathbb{E}(L)}\equiv 0$, then 
$L|_{\mathbb{E}(L)}$ is automatically endowed with the constant map $\mathbb{E}(L)\to \rm{pt}$ hence 
$L$ is endowed with a map. This is particularly useful for studying $3$-folds because it is often 
not difficult to show that $L|_{\mathbb{E}(L)}$ is endowed with a map, eg  when $\dim \mathbb{E}(L)=1$.

\begin{thm}[{[\ref{Keel}, 0.5]}]\label{t-Keel-2}
Let $(X,B)$ be a projective $\Q$-factorial pair of dimension $3$ over $k$ with $B$ a $\Q$-divisor. 
Assume that $A$ is an ample $\Q$-divisor such that 
$L=K_X+B+A$ is nef and big. Then $L$ is endowed with a map.
\end{thm}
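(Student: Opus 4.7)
The plan is to invoke Theorem~\ref{t-Keel-1}, reducing the problem to showing that $L|_{\mathbb{E}(L)}$ is endowed with a map. Since $L$ is nef and big, $\mathbb{E}(L)=\mathbf{B}_+(L)$ is a proper closed subset of $X$, so $\dim\mathbb{E}(L)\le 2$. Thus it suffices to exhibit an endowed map on each irreducible component of $\mathbb{E}(L)$ and then glue. The low-dimensional components are easy: for a $1$-dimensional component $Y\subseteq\mathbb{E}(L)$ one has $L\cdot Y=0$ (otherwise $L|_Y$ would be ample on a curve, hence big, contradicting $Y\subseteq\mathbb{E}(L)$), so $L|_Y\equiv 0$ and the constant map endows it; the $0$-dimensional case is tautological.

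The main obstacle is a $2$-dimensional irreducible component $S$ of $\mathbb{E}(L)$, where $L|_S$ is nef but not big. My plan is to reduce this to the dimension-$2$ version of the theorem, which is Keel's original surface result. To apply surface adjunction I would first arrange that $S$ becomes a component of $\rddown{B'}$ for a suitably modified boundary. Using the bigness of $L$ together with Kodaira's lemma, write $L\sim_\Q E+H$ with $H$ ample and $E\ge 0$; since $S\subseteq\mathbf{B}_+(L)$, we may choose such a decomposition so that $\Supp(E)$ contains $S$. For small $\epsilon>0$, rewrite
\[
L\;=\;K_X+(B+\epsilon E)+(A-\epsilon E),
\]
and absorb a small positive multiple of $H$ into $A-\epsilon E$ to restore its ampleness. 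The resulting boundary $B'$ contains $S$ with positive coefficient; by rescaling $\epsilon$ (using $\Q$-factoriality of $X$ to manage coefficients) we can in fact make $S$ a component of $\rddown{B'}$, with the pair plt near $S$. Adjunction on the normalization $\nu\colon S^\nu\to S$ then writes $L|_{S^\nu}=K_{S^\nu}+B_{S^\nu}+A_{S^\nu}$ with $A_{S^\nu}$ ample, and the surface version of the theorem, due to Keel~[\ref{Keel}], produces an endowed map on $S^\nu$, which descends to $S$.

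The final step is to glue the endowed maps on the distinct irreducible components of $\mathbb{E}(L)$ into a single endowed map: on any curve $C$ lying in the intersection of two components we have $L\cdot C=0$, forcing the component maps to agree after contracting $C$ to a point, and a standard Stein-type argument assembles them into a map to an algebraic space. The hardest part of the plan is justifying the perturbation of $(X,B)$: one must execute the substitution of $B$ by $B'$ simultaneously for each $2$-dimensional component of $\mathbb{E}(L)$ while preserving $L=K_X+B'+A'$, keeping $A'$ ample, and ensuring the pair is plt near every such $S$. This requires a careful bookkeeping of the Kodaira decomposition combined with the $\Q$-factoriality of $X$; once achieved, the surface result plugs in directly and the iterated application of Theorem~\ref{t-Keel-1} completes the argument.
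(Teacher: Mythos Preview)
First, note that the paper does not give its own proof of this statement: it is quoted verbatim from Keel's paper [\ref{Keel}, 0.5] as one of several results being recalled, so there is no proof here to compare against. What one can compare with is Keel's actual argument, and your proposal diverges from it in a way that introduces a real gap.

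The hypothesis on $(X,B)$ is only that it is a $\Q$-factorial pair: $B$ has coefficients in $[0,1]$ and $K_X+B$ is $\Q$-Cartier. There is \emph{no} lc, dlt, or klt assumption. Your key perturbation step asks to replace $B$ by $B'$ so that a given $2$-dimensional component $S\subset\mathbb{E}(L)$ becomes a component of $\rddown{B'}$ with $(X,B')$ plt near $S$. This is impossible in general: if $X$ itself has a non-klt singularity along $S$, then $(X,\Delta)$ fails to be plt near $S$ for \emph{every} effective $\Delta$. Without pltness, adjunction does not produce a boundary $B_{S^\nu}$ with coefficients in $[0,1]$, and the ``surface version of the theorem'' you invoke (which would in effect be abundance for log surfaces) is not available. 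Indeed, Keel's paper does not contain a surface analogue of Theorem~0.5 in the form you need; the $3$-fold statement is the base case.

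Keel's actual proof avoids adjunction entirely. For a $2$-dimensional component $S$ of $\mathbb{E}(L)$, through a general point of $S$ there is a curve $C$ with $L\cdot C=0$, hence $(K_X+B)\cdot C=-A\cdot C<0$; bend-and-break then produces rational curves of bounded $A$-degree through general points of $S$, giving a bounded covering family of $L$-trivial curves on $S$. The quotient by this family (in the sense of algebraic spaces) endows $L|_S$ with a map, and one then applies Theorem~\ref{t-Keel-1}. This Mori-theoretic route is insensitive to the singularities of $(X,B)$, which is exactly why the theorem holds without any lc-type hypothesis.
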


In particular, when $L^\perp$ is an extremal ray, 
then we can contract $R$ to an algebraic space by the map associated to $L$.
Thus such an extremal ray is generated by the class of some curve.

We also recall the following cone theorems which we will use repeatedly in Section \ref{s-ext-rays}. 
Note that these theorems (as well as \ref{t-Keel-2}) do not assume singularities to be lc.

\begin{thm}[{[\ref{Keel}, 0.6]}]\label{t-Keel-3}
Let $(X,B)$ be a projective $\Q$-factorial pair of dimension $3$ over $k$ with $B$ a $\Q$-divisor. 
Assume that $K_X+B\sim_\Q M$ for some $M\ge 0$. Then there is a countable 
number of curves $\Gamma_i$ such that 

$\bullet$ $\overline{NE}(X)=\overline{NE}(X)_{K_X+B\ge 0}+\sum_i \R [\Gamma_i]$,

$\bullet$  all but finitely many of the $\Gamma_i$ are rational curves satisfying 
$-3\le (K_X+B)\cdot \Gamma_i<0$, and  

$\bullet$ the rays $\R [\Gamma_i]$ do not accumulate inside  $\overline{NE}(X)_{K_X+B<0}$.\\
\end{thm}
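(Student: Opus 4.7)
My plan is to exploit the effective representative $M$ to confine all $(K_X+B)$-negative curves to the two-dimensional locus $T:=\Supp M$, reducing the cone statement to the classical cone theorem and bend-and-break for projective surfaces, both of which are available in arbitrary characteristic.

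First I would observe that for any irreducible curve $C\subset X$ with $(K_X+B)\cdot C<0$ one has $M\cdot C<0$, so $C\subset T$. Hence every class $\alpha\in\overline{NE}(X)$ with $(K_X+B)\cdot\alpha<0$ lies in the push-forward of $\overline{NE}(S^{\nu})$ for some irreducible component $S$ of $T$, where $\nu\colon S^{\nu}\to S$ denotes the normalization. On $S^{\nu}$ the divisor $L:=\nu^{*}((K_X+B)|_{S})$ is $\R$-Cartier, and via adjunction it can be written in the form $K_{S^{\nu}}+B_{S^{\nu}}$ for an auxiliary $\R$-divisor $B_{S^{\nu}}$.

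Next I would invoke the cone theorem for projective surfaces applied to the divisor $L$ on each $S^{\nu}$: the cone $\overline{NE}(S^{\nu})$ decomposes as the $L$-nonnegative part together with countably many $L$-negative extremal rays which do not accumulate in the interior of $\{L<0\}$. Mori's bend-and-break for surfaces (carried out in char $p$ via Frobenius pullback to inflate the degree of a would-be non-deformable negative curve, followed by deformation and specialization) produces in each such ray a rational curve $\Gamma\subset S^{\nu}$ with $-L\cdot\Gamma\le 3$, save for possibly finitely many exceptions. Pushing these forward to $X$ yields rational curves satisfying $-3\le (K_X+B)\cdot\Gamma<0$, and the union over the finitely many components $S$ of $T$, combined with the $(K_X+B)$-nonnegative part of $\overline{NE}(X)$, gives the three bulleted conclusions, with non-accumulation inherited from each $S^{\nu}$.

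The main obstacle is the adjunction step and the associated sign issue: since $(X,B)$ is only assumed $\Q$-factorial (not lc or dlt), the divisor $B_{S^{\nu}}$ coming from adjunction need not have coefficients in $[0,1]$ and bend-and-break does not apply verbatim to $K_{S^{\nu}}+B_{S^{\nu}}$. One circumvents this by running bend-and-break on a resolution $\widetilde{S}\to S^{\nu}$ with respect to $K_{\widetilde{S}}$ and then transporting the resulting rational curve back to $S^{\nu}$: the only input actually used is that the pairing of $L$ with the test curve is negative, so the dimension-$2$ bound $-3$ on $(K_X+B)\cdot\Gamma$ is preserved, and the finiteness of the exceptional set together with the non-accumulation on each $\widetilde{S}$ assembles into the global statement.
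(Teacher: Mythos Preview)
The paper does not prove this theorem: it appears in the preliminaries (subsection ``Keel's results'') as a direct citation of [\ref{Keel}, 0.6], with no argument supplied. So there is no proof in the paper to compare against; your sketch is to be measured against Keel's original.

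Your strategy is the same as Keel's: the effective representative $M\sim_\Q K_X+B$ forces every $(K_X+B)$-negative irreducible curve into $\Supp M$, so the negative part of the cone is controlled by the finitely many surface components of $\Supp M$, where one has cone/bend-and-break statements in any characteristic. Two points deserve tightening. First, the assertion that ``every class $\alpha\in\overline{NE}(X)$ with $(K_X+B)\cdot\alpha<0$ lies in the push-forward of $\overline{NE}(S^{\nu})$ for some component $S$'' is false as written (such a class may mix contributions from several components and from the nonnegative half-space); what is true, and what actually suffices, is that every irreducible \emph{curve} with negative intersection lies in some $S$, hence $\overline{NE}(X)$ equals the closed cone spanned by $\overline{NE}(X)_{K_X+B\ge 0}$ together with the images of the $\overline{NE}(S^{\nu})$. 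Second, and more substantively, the step ``the dimension-$2$ bound $-3$ on $(K_X+B)\cdot\Gamma$ is preserved'' is not a consequence of ordinary bend-and-break on $\widetilde S$ with respect to $K_{\widetilde S}$: that only gives $-K_{\widetilde S}\cdot\Gamma\le 3$, which says nothing about $(K_X+B)\cdot\Gamma$ when $B_{S^\nu}$ is not effective. Keel obtains the bound via the refined bend-and-break that controls the degree of the output rational curve against an \emph{auxiliary} nef divisor (his ``technical cornerstone'' in \S5 of [\ref{Keel}]), applied with the pullback of $K_X+B$ playing that role. With these two fixes your outline is Keel's proof.
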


\begin{thm}[{[\ref{Keel}, 5.5.2]}]\label{t-Keel-4}
Let $(X,B)$ be a projective $\Q$-factorial pair of dimension $3$ over $k$. 
Assume that 
$$
L=K_X+B+H\sim_\R A+M
$$ 
is nef where $H,A$ are ample $\R$-divisors, and $M\ge 0$. 
Then any extremal ray of $L^\perp$ is generated by some curve $\Gamma$ 
such that either 

$\bullet$ $\Gamma$ is a component of the singular locus of $B+M$ union 
with the singular locus of $X$, or

$\bullet$ $\Gamma$ is a rational curve satisfying $-3\le (K_X+B)\cdot \Gamma<0$.\\
\end{thm}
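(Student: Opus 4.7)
The plan is to combine Keel's endowing-map criterion with Mori's bend-and-break in characteristic $p$. Since $A$ is ample and $M\ge 0$, the nef divisor $L\sim_\R A+M$ is also big, so by Theorem \ref{t-Keel-2}, $L$ is endowed with a map $\pi\colon X\to V$ to some algebraic space. Consequently each extremal ray $R$ of the face $L^\perp$ is spanned by the class of an irreducible curve contracted by $\pi$, and it suffices to exhibit a suitable such curve with the claimed properties.

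Two numerical constraints on any curve $C$ with $[C]\in R$ come for free. From $L=K_X+B+H$, the equation $L\cdot C=0$ together with $H\cdot C>0$ (as $H$ is ample) yields $(K_X+B)\cdot C=-H\cdot C<0$, so $R$ is automatically $K_X+B$-negative. From $L\sim_\R A+M$ and $A$ ample we likewise get $M\cdot C=-A\cdot C<0$, forcing $C\subseteq \Supp M$. Now I would pick an irreducible curve $\Gamma$ with $[\Gamma]\in R$ of minimal $H$-degree. If $\Gamma$ is not contained in $\mathrm{Sing}(X)\cup\mathrm{Sing}(B+M)$, then its generic point lies where $X$ is smooth and $B+M$ has at most one smooth sheet, and $(K_X+B)\cdot\Gamma<0$. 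Mori's bend-and-break, in the characteristic $p$ form using Frobenius twists to inflate sections of the normal sheaf (as used in [\ref{HX}] and the standard char $p$ cone theorem), then produces deformations of the normalization of $\Gamma$ fixing a general smooth point, which break off a rational curve $\Gamma'$ with $[\Gamma']\in R$. A standard ruled-surface estimate on a threefold gives $-(K_X+B)\cdot\Gamma'\le 3$, which is the second bullet.

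If instead every irreducible curve representing $R$ lies in $\mathrm{Sing}(X)\cup\mathrm{Sing}(B+M)$, then $\Gamma$ does, and being $1$-dimensional and irreducible it must be a component of this locus, giving the first bullet. The main obstacle is carrying out bend-and-break in characteristic $p$ on a $\Q$-factorial but possibly singular threefold in the presence of the boundary $B$ and the effective divisor $M$: one must generate enough sections of the normal sheaf of $\Gamma$'s normalization to force a deformation, and then use the geometry of the resulting ruled surface to extract both a rational curve in $R$ and the degree bound $-(K_X+B)\cdot\Gamma'\le 3$. This is the technically delicate step in Keel's original argument, handled via Frobenius endomorphisms together with Riemann--Roch on normalizations; the cleanness of the dichotomy above rests on the fact that the only obstruction to deforming $\Gamma$ is its being trapped in the singular locus.
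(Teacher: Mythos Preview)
The paper does not prove Theorem \ref{t-Keel-4}; it is quoted from Keel [\ref{Keel}, 5.5.2] and used throughout Section \ref{s-ext-rays} as a black box. There is therefore no proof in the paper to compare your proposal against.

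On the proposal itself: the overall shape matches Keel's argument, but two steps are not justified as written. First, from $L$ being endowed with a map $\pi$ you conclude that every extremal ray $R$ of $L^\perp$ is spanned by the class of an irreducible curve. This does not follow formally: Theorem \ref{t-Keel-2} only tells you which curves $\pi$ contracts, not that the face $L^\perp\cap\overline{NE}(X)$ is locally polyhedral or that each of its extremal rays contains an actual curve class. What makes this work is the observation (which you do make) that every curve in $L^\perp$ lies in $\Supp M$, a finite union of surfaces; one then controls the relevant part of the cone by passing to the normalizations of the components of $M$ and using surface theory there. That reduction to surfaces is the real engine of Keel's proof, and you have bypassed it. Second, Mori's bend-and-break on $X$ requires $K_X\cdot\Gamma<0$, whereas you only have $(K_X+B)\cdot\Gamma<0$; when $\Gamma$ sits inside a component of $B$ the sign of $K_X\cdot\Gamma$ is not controlled, so deforming $\Gamma$ directly on $X$ is not available. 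Keel's remedy is again to work on the surface component containing $\Gamma$ (with adjunction) rather than on the threefold. Your final paragraph already concedes that the delicate part is Keel's, so what you have is an outline of the proof rather than a proof.
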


\begin{rem}\label{ss-good-exc-locus}
Let $(X,B)$ a projective lc pair of dimension $3$ over $k$ with $B$ a $\Q$-boundary, 
and $H$ an ample $\Q$-divisor. Assume that $L=K_X+B+H$ is  
nef and big. Moreover, suppose that each connected component of 
 $\mathbb{E}(L)$ is inside some normal irreducible component $S$ of $\rddown{B}$. 
Then  $L|_S$ is semi-ample for such components (cf. [\ref{Tanaka}]) hence $L|_{\mathbb{E}(L)}$ is semi-ample 
and this in turn implies that $L$ is semi-ample by Theorem \ref{t-Keel-1}. 
\end{rem}

\section{Extremal rays and special kinds of LMMP}\label{s-ext-rays}

As usual the varieties and algebraic spaces in this section are defined over $k$ of char $p>0$.

\subsection{Extremal curve of a ray} \label{ss-ext-curve}
Let $X$ be a projective variety and $H$ a fixed ample Cartier divisor. 
Let $R$ be a ray of  $\overline{NE}(X)$ which is generated by some curve $\Gamma$. 
Assume that 
$$
H\cdot \Gamma=\min\{H\cdot C\mid \mbox{$C$ generates $R$} \}
$$
 In this case, we say $\Gamma$ 
is an \emph{extremal curve} of $R$ (in practice we do not mention $H$ and assume that 
it is already fixed). Let $C$ be any other curve 
generating $R$.
Assume that $D\cdot R<0$ for some $\R$-Cartier divisor $D$. Since $\Gamma$ and $C$ both generate $R$, 
$$
\frac{D\cdot C}{H\cdot C}=\frac{D\cdot \Gamma}{H\cdot \Gamma}
$$
hence 
$$
D\cdot \Gamma={D\cdot C}(\frac{H\cdot \Gamma}{H\cdot C})\ge D\cdot C
$$
which implies that 
$$
D\cdot \Gamma=\max\{D\cdot C\mid \mbox{$C$ generates $R$} \}
$$

\subsection{Negative extremal rays}\label{ss-ext-rays} 
Let $(X,B)$ be a projective $\Q$-factorial pair of dimension $3$. 
 Let $R$ be a $K_X+B$-negative extremal ray. Assume that there is a 
boundary $\Delta$ such that $K_X+\Delta$ is pseudo-effective and 
$(K_X+\Delta)\cdot R<0$. By adding a small ample divisor and perturbing the 
coefficients we can assume that $\Delta$ is rational and that $K_X+\Delta$ is big. 
Then by Theorem \ref{t-Keel-3}, 
$R$ is generated by some extremal curve and $R$ is an isolated extremal ray of 
$\overline{NE}(X)$. 

Now assume that $K_X+B$ is pseudo-effective and let $A$ be an ample $\R$-divisor. 
Then for any $\epsilon>0$, there are only finitely many $K_X+B+\epsilon A$-negative 
extremal rays: assume that this is not the case; then we can find a $\Q$-boundary $\Delta$ 
such that $K_X+\Delta$ is big and 
$$
K_X+B+\epsilon A\sim_\R K_X+\Delta+G
$$ where 
$G$ is ample; so there are also infinitely many $K_X+\Delta$-negative 
extremal rays; but $K_X+\Delta$ is 
big hence by Theorem \ref{t-Keel-3} all but finitely many of the 
$K_X+\Delta$-negative extremal 
rays are generated by extremal curves $\Gamma$ with 
$-3\le (K_X+\Delta)\cdot \Gamma<0$; if $(K_X+B+\epsilon A)\cdot \Gamma<0$,  
then $G\cdot \Gamma\le 3$; since $G$ is ample, there can be only 
finitely many such $\Gamma$ up to numerical equivalence.

Let $R$ be a $K_X+B$-negative extremal ray where $K_X+B$ is not necessarily pseudo-effective. 
But assume that there is a pseudo-effective $K_X+\Delta$ with  
$(K_X+\Delta)\cdot R<0$. By the remarks above we may assume $\Delta$ is rational, $K_X+\Delta$ big, 
and that there are only finitely many $K_X+\Delta$-negative extremal rays.
Therefore, we can find an ample $\Q$-divisor 
$H$ such that $L=K_X+\Delta+H$ is nef and big and $L^\perp=R$. That is, $L$ is a \emph{supporting divisor} 
of $R$. Moreover,  
$R$ can be contracted to an algebraic space, by Theorem \ref{t-Keel-2}. 
More precisely, there is a 
contraction $X\to V$ to an algebraic space such that  
it contracts a curve $C$ if and only if $L\cdot C=0$ if and only if the class $[C]\in R$.

\subsection{More on negative extremal rays}\label{ss-ext-rays-II} 
Let $(X,B)$ be a projective $\Q$-factorial pair of dimension $3$. 
Let $\mathcal{C}\subset \overline{NE}(X)$ be one of the following:

$(1)$ $\mathcal{C}=\overline{NE}(X/Z)$ for a given projective contraction $X\to Z$ such that 
$K_X+B\equiv P+M/Z$ where $P$ is nef$/Z$ and $M\ge 0$ (this is a \emph{weak Zariski decomposition}; 
see \ref{ss-WZD}); or

$(2)$ $\mathcal{C}=N^\perp$ for some nef and big $\Q$-divisor $N$;

We will show that in both cases, each $K_X+B$-negative extremal ray 
$R$ of $\mathcal{C}$ is generated by an extremal curve $\Gamma$, and for all but finitely many of 
those rays we have $-3\le (K_X+B)\cdot \Gamma<0$. 

We first deal with case (1). Fix a 
$K_X+B$-negative extremal ray $R$ of $\mathcal{C}$. By replacing $P$ we can 
assume that $K_X+B=P+M$. Let $A$ be an ample $\R$-divisor and 
$T$ be the pullback of a sufficiently ample divisor on $Z$ 
so that $K_X+B+A+T$ is big and $(K_X+B+A+T)\cdot R<0$. 
By \ref{ss-ext-rays}, there is a nef and big $\Q$-divisor $L$ with $L^\perp=R$. Moreover, 
we may assume that if   
$l\gg 0$, then 
$$
Q_1:=K_X+B+T+lL+A
$$
 is nef and big and $Q_1^\perp=R$. 
By construction, $T+lL+A$ is ample, $P+T+lL+A$ is also ample, and 
$$
K_X+B+T+lL+A=P+T+lL+A+M
$$ 
Therefore, by Theorem \ref{t-Keel-4},  $R$ is generated by some curve 
$\Gamma$ satisfying $-3\le (K_X+B)\cdot \Gamma<0$ or $R$ is generated by some curve 
in the singular locus of $B+M$ or $X$. There are only finitely many possibilities in the latter 
case. The claim then follows.

Now we deal with case (2). Fix a $K_X+B$-negative extremal ray $R$ of $\mathcal{C}$. 
Since $N$ is nef and big, for some $n>0$, 
$$
K_X+B+nN\sim_\R G+S
$$ 
where $G$ is ample and $S\ge 0$. 
By \ref{ss-ext-rays}, there is a nef and big $\Q$-divisor $L$ with $L^\perp=R$. 
Moreover, for some  
$l\gg 0$ and some ample $\R$-divisor $A$, 
$$
Q_2:=K_X+B+nN+lL+A
$$ 
is nef and big with $Q_2^\perp=R$. 
Now, $nN+lL+A$ is ample, $G+lL+A$ is ample, and 
$$
K_X+B+nN+lL+A\sim_\R G+lL+A+S
$$
Therefore, by Theorem \ref{t-Keel-4},  $R$ is generated by some curve 
$\Gamma$ satisfying $-3\le (K_X+B)\cdot \Gamma<0$ or $R$ is generated by some curve 
in the singular locus of $B+S$ or $X$. There are only finitely many possibilities in the latter 
case. The claim then follows.

Assume that $R$ is a $K_X+B$-negative extremal ray of $\mathcal{C}$, in either case. 
Then the above arguments show that there is a $\Q$-boundary $\Delta$ and an ample $\Q$-divisor 
$H$ such that $K_X+\Delta$ is big, $(K_X+\Delta)\cdot R<0$, and $L=K_X+\Delta+H$ is nef and 
big with $L^\perp=R$. Therefore, as in \ref{ss-ext-rays}, $R$ can be contracted via a contraction $X\to V$ to an 
algebraic space. Moreover, if $B$ is rational, then we can find an ample $\Q$-divisor 
$H'$ such that $L'=K_X+B+H'$ is nef and big and again $L'^\perp=R$.

\subsection{Extremal rays given by scaling.}\label{ss-ext-rays-scaling} 
Let $(X,B)$ be a projective $\Q$-factorial 
 pair of dimension $3$. Assume that either $\mathcal{C}=\overline{NE}(X/Z)$ for some 
projective contraction $X\to Z$ such that $K_X+B\equiv M/Z$ for some $M\ge 0$, or $\mathcal{C}=N^\perp$ for some 
nef and big $\Q$-divisor $N$. 
 In addition assume that $(X,B+C)$ is a pair for some $C\ge 0$ and that $K_X+B+C$ is nef on 
$\mathcal{C}$, that is, $(K_X+B+C)\cdot R\ge 0$ for every extremal ray $R$ of $\mathcal{C}$. 
Let 
$$
\lambda=\inf\{t\ge 0\mid K_X+B+tC \mbox{~is nef on $\mathcal{C}$}\}
$$
Then we will see that either $\lambda=0$ or there is an extremal ray $R$ of $\mathcal{C}$ 
such that $(K_X+B+\lambda C)\cdot R=0$ and $(K_X+B)\cdot R<0$. Assume $\lambda>0$. 
If the claim is not true, then there exist a sequence of numbers $t_1<t_2<\cdots$ approaching 
$\lambda$ and extremal rays $R_i$ of $\mathcal{C}$ such that 
$(K_X+B+t_i C)\cdot R_i=0$ and $(K_X+B)\cdot R_i<0$. 

First assume that $\mathcal{C}=N^\perp$ for some nef and big $\Q$-divisor $N$.
We can write a finite sum $K_X+B=\sum_j r_j(K_X+B_j)$ where $r_j\in (0,1]$, $\sum r_j=1$, and 
$(X,B_j)$ are pairs with $B_j$ being rational. By \ref{ss-ext-rays-II}, we may assume 
that each $R_i$ is generated by some extremal curve $\Gamma_i$ with $-3\le (K_X+B_j)\cdot \Gamma_i$ 
for each $j$. This implies that there are only finitely many possibilities 
for the numbers $(K_X+B)\cdot \Gamma_i$. A similar reasoning shows that there are only finitely many 
possibilities for the numbers $(K_X+B+\frac{\lambda}{2}C)\cdot \Gamma_i$ hence there are also only finitely many 
possibilities for the numbers $C\cdot \Gamma_i$. But then this implies that there are finitely 
many $t_i$, a contradiction.

Now assume that $\mathcal{C}=\overline{NE}(X/Z)$ for some projective contraction $X\to Z$ 
such that $K_X+B\equiv M/Z$ for some $M\ge 0$. 
Then we can write $K_X+B=\sum_j r_j(K_X+B_j)$ and  $M=\sum_j r_jM_j$ 
where $r_j\in (0,1]$, $\sum r_j=1$, $(X,B_j)$ are pairs with $B_j$ being rational, 
$K_X+B_j\equiv M_j/Z$, and $M_j\ge 0$. 
To find such a decomposition we argue as in [\ref{BP}, pages 96-97].
Let $V$ and $W$ be the $\R$-vector spaces generated by the 
components of $B$ and $M$ respectively. For a vector $v\in V$ (resp. $w\in W$) we denote the 
corresponding $\R$-divisor by 
$B_v$ (resp. $M_w$). Let $F$ be the set of those $(v,w)\in V\times W$ such that 
$(X,B_v)$ is a pair, $M_w\ge 0$, and $K_X+B_v\equiv M_w/Z$. 
Then $F$ is defined by a finite 
number of linear equalities and 
inequalities with rational coefficients. If $B=B_{v_0}$ and $M=M_{w_0}$ are the given divisors, then $(v_0,w_0)\in F$ 
hence it belongs to some polytope in $F$ with rational vertices. The vertices of the polytope give the $B_j,M_j$. 
The rest of the proof is as in the last paragraph.

\subsection{LMMP with scaling}\label{ss-g-LMMP-scaling}
Let $(X,B)$ be a projective $\Q$-factorial pair of dimension $3$. 
Assume that either $\mathcal{C}=\overline{NE}(X/Z)$ for some 
projective contraction $X\to Z$ such that $K_X+B\equiv M/Z$ for some $M\ge 0$, or $\mathcal{C}=N^\perp$ for some 
nef and big $\Q$-divisor $N$. 
 In addition assume that $(X,B+C)$ is a pair for some $C\ge 0$ and that $K_X+B+C$ is nef on 
$\mathcal{C}$. 

If $K_X+B$ is not nef on $\mathcal{C}$, by \ref{ss-ext-rays-scaling}, there is an extremal ray $R$ 
of $\mathcal{C}$ such that $(K_X+B+\lambda C)\cdot R=0$ and $(K_X+B)\cdot R<0$ 
where $\lambda$ is the smallest  number such that $K_X+B+\lambda C$ is nef on 
$\mathcal{C}$. Assume that $R$ can be contracted by a projective morphism. The contraction is birational 
because $L\cdot R=0$ for some nef and big $\Q$-Cartier divisor $L$ (see \ref{ss-ext-rays-II}). 
Assume that $X\bir X'$ is the corresponding divisorial contraction or flip, and assume that 
$X'$ is $\Q$-factorial. 
Let $\mathcal{C}'$ be the cone given by $\mathcal{C}'=\overline{NE}(X'/Z)$ or $\mathcal{C}'=(N')^\perp$ 
 corresponding to the above cases. 
Let $\lambda'$ be the smallest nonnegative number such that $K_{X'}+B'+\lambda' C'$ is nef on 
$\mathcal{C}'$. If $\lambda'>0$, then there is an extremal ray $R'$ 
of $\mathcal{C}'$ such that $(K_{X'}+B'+\lambda' C')\cdot R'=0$ and $(K_{X'}+B')\cdot R'<0$. 
Assume that $R'$ can be contracted and so on. Assuming that all the necessary ingredients 
exist, the process gives a special kind of 
LMMP which we may refer to as \emph{LMMP$/\mathcal{C}$ on $K_X+B$ with scaling of $C$}.  
Note that $\lambda\ge \lambda'\ge \cdots$

If $\mathcal{C}=\overline{NE}(X/Z)$, we also refer to the above LMMP as the LMMP$/Z$ 
on $K_X+B$ with scaling of $C$. If $\mathcal{C}=N^\perp$, and if 
$N$ is endowed with a map $X\to V$ to an algebraic space, we refer to the 
above LMMP as the LMMP$/V$ on $K_X+B$ with scaling of $C$. 

In practice, when we run an LMMP with scaling, $(X,B)$ is $\Q$-factorial dlt 
and each extremal ray in the process intersects some component of $\rddown{B}$ 
negatively. In particular, such rays can be contracted by projective morphisms and 
the $\Q$-factorial property is preserved by the LMMP (see \ref{ss-pl-ext-rays}). 
If the required 
flips exist then the LMMP terminates by special termination (see \ref{ss-special-termination}).

\subsection{Extremal rays given by a weak Zariski decomposition}\label{ss-ext-ray-WZD}
Let $(X,B)$ be a projective $\Q$-factorial pair of dimension $3$ and $X\to Z$ a projective contraction such that 
\begin{enumerate}
\item $K_X+B\equiv P+M/Z$, $P$ is nef$/Z$, $M\ge 0$, and 
\item $\Supp M\subseteq \rddown{B}$. 
\end{enumerate}

Let
$$
\mu=\sup \{t\in [0,1] \mid P+tM ~~~\mbox{is nef$/Z$}\}. 
$$
Assume that $\mu<1$. We will show that there is an extremal ray $R/Z$ 
such that $(K_X+B)\cdot R<0$ and $(P+\mu M)\cdot R=0$. 

 Replacing $P$ with $P+\mu M$ we may assume that $\mu=0$. 
Then by definition of $\mu$, $P+\epsilon'M$ is not nef$/Z$ for any $\epsilon'>0$. In particular, for any 
$\epsilon'>0$ there is a $K_X+B$-negative extremal ray $R/Z$ 
such that $(P+\epsilon'M)\cdot R<0$ but $(P+\epsilon M)\cdot R=0$ for some $\epsilon\in [0,\epsilon')$.
If  there is no $K_X+B$-negative extremal ray $R/Z$ such that $P\cdot R=0$, then there is an infinite  
strictly decreasing 
sequence of sufficiently small positive real numbers $\epsilon_i$ and $K_X+B$-negative extremal rays $R_i/Z$ 
such that $\lim_{i\to \infty} \epsilon_i=0$ and
$(P+\epsilon_i M)\cdot R_i=0$. 

We may assume that for each $i$, there is an extremal curve 
$\Gamma_i$ generating $R_i$ such that $-3\le (K_X+B)\cdot \Gamma_i<0$ (see \ref{ss-ext-rays-II}).
Since $\Supp M\subseteq \rddown{B}$, 
there is a small $\delta>0$ such that  $(K_X+B-\delta M)\cdot \Gamma_i<0$ 
for each $i$, $B-\delta M\ge 0$, 
and $\Supp (B-\delta M)=\Supp B$. 
We have
$$
K_X+B-\delta M\equiv P+(1-\delta)M/Z
$$  

By replacing the sequence of extremal rays with a subsequence, 
we can assume that each component $S$ of $M$ satisfies: either $S\cdot R_i\ge 0$ for every $i$, or 
$S\cdot R_i<0$ for every $i$. Pick a component $S$. If $S\cdot R_i\ge 0$ for each $i$, 
then by \ref{ss-ext-rays-II}, we may assume that 
$$
-3\le (K_X+B-\delta M)\cdot \Gamma_i<0
$$ 
and
$$
-3\le (K_X+B-\delta M-\tau S)\cdot \Gamma_i<0
$$ 
for every $i$ where $\tau>0$ is a small number.
In particular, this means that $S\cdot \Gamma_i$ 
is bounded from below and above. On the other hand, if $S\cdot R_i< 0$ for each $i$, then by considering 
$K_X+B-\delta M+\tau S$ and arguing similarly we can show that again $S\cdot \Gamma_i$ 
is bounded from below and above. In particular, there are only finitely many 
possibilities for the numbers $M\cdot \Gamma_i$.
Therefore, 
$$
\lim_{i\to \infty} P\cdot \Gamma_i=\lim_{i\to \infty} -\epsilon_iM\cdot \Gamma_i=0
$$

Write $K_X+B=\sum_j r_j(K_X+B_j)$ where $r_j\in (0,1]$, $\sum r_j=1$, and 
$(X,B_j)$ are pairs with $B_j$ being rational. 
We can assume that 
each component of $B-B_j$ has irrational coefficient in $B$ hence 
 $B-B_j$ and $M$ have no common components because $\Supp M\subseteq \rddown{B}$. 
Assume $(K_X+B_j)\cdot \Gamma_i<0$ for some $i,j$. Let $S$ be a component of $M$ such that
 $S\cdot \Gamma_i<0$, and let $S^\nu$ be its normalization. 
 Let $K_{S^\nu}+B_{j,S^\nu}=(K_X+B_j)|_{S^\nu}$ (see Section \ref{s-adjunction} for adjunction 
formulas of this type). On the other hand, by  \ref{ss-ext-rays-II}, 
there is an ample $\Q$-divisor $H$ such that $Q=K_X+B_j+H$ is nef and big and $R_i=Q^\perp$.
Now the face $(Q|_{S^\nu})^\perp$  of $\overline{NE}(S^\nu/Z)$  is generated by finitely many curves 
$\Lambda_1^\nu, \dots, \Lambda_r^\nu$ such that $\alpha_j\le (K_{S^\nu}+B_{j,S^\nu})\cdot \Lambda_l^\nu<0$ 
where $\alpha_j$ depends on $({S^\nu},B_{j,S^\nu})$ but does not depend on $i$,   
by Tanaka [\ref{Tanaka}, Theorem 4.4, Remark 4.5]. Let $\Lambda_l$ be the image of $\Lambda_l^\nu$ 
under the map $S^\nu\to X$.
 Since $R_i=Q^\perp$ and $Q\cdot \Lambda_l=0$, each $\Lambda_l$ also generates $R_i$. But as $\Gamma_i$ is 
extremal, perhaps after replacing the $\alpha_j$, we get 
$$
\alpha_j\le (K_X+B_j)\cdot \Lambda_l\le (K_X+B_j)\cdot \Gamma_i<0
$$
by \ref{ss-ext-curve}. 

On the other hand, 
since 
$$
-3\le (K_X+B)\cdot \Gamma_i=\sum_j r_j(K_X+B_j)\cdot \Gamma_i<0
$$ 
for each $i$, we deduce that  $(K_X+B_j)\cdot \Gamma_i$ is  
bounded from below and above for each $i,j$ which in turn implies that there are only finitely many 
possibilities for $(K_X+B)\cdot \Gamma_i$. Recalling that there are also finitely many 
possibilities for $M\cdot \Gamma_i$, we get a contradiction as 
$$
0<P\cdot \Gamma_i=(K_X+B)\cdot \Gamma_i-M\cdot \Gamma_i
$$ 
but $\lim_{i\to \infty} P\cdot \Gamma_i=0$.

\subsection{LMMP using a weak Zariski decomposition}\label{ss-LMMP-WZD}
Let $(X,B)$ be a projective $\Q$-factorial pair of dimension $3$ and $X\to Z$ a projective contraction such that 
$K_X+B\equiv P+M/Z$ where $P$ is nef$/Z$, $M\ge 0$, and 
$\Supp M\subseteq \rddown{B}$. Let $\mu$ be the largest number such that $P+\mu M$ is nef$/Z$. 
Assume $\mu<1$. Then, by \ref{ss-ext-ray-WZD}, there is an extremal ray $R/Z$ such that 
$(K_X+B)\cdot R<0$ and $(P+\mu M)\cdot R=0$. By replacing $P$ with $P+\mu M$ we may assume that 
$P\cdot R=0$. Assume that $R$ can be contracted by a projective morphism and that 
it gives a divisorial contraction or a log flip 
$X\bir X'/Z$ with $X'$ being $\Q$-factorial. Obviously,  
$K_{X'}+B'\equiv P'+M'/Z$ 
where $P'$ is nef$/Z$, $M'\ge 0$, and $\Supp M'\subseteq \rddown{B'}$. 
Continuing this process we obtain a particular kind of LMMP 
which we will refer to as the  \emph{LMMP using a weak Zariski decomposition} or more specifically 
the \emph{LMMP$/Z$ on $K_X+B$ using $P+M$}. When we need this LMMP below we will make sure that 
all the necessary ingredients exist.\\

\section{Adjunction}\label{s-adjunction}

The varieties in this section are over $k$ of arbitrary characteristic.
 We will use some of the 
results of Koll\'ar [\ref{Kollar-sing}] to prove an adjunction formula.
Let $\Lambda$ be a DCC set of numbers in $[0,1]$. 
Then the hyperstandard set 
$$
\mathfrak{S}_\Lambda=\{\frac{m-1}{m}+\sum \frac{l_ib_i}{m}\le 1 \mid m\in \N\cup\{\infty\}, l_i\in \Z^{\ge 0}, b_i\in \Lambda\}
$$
also satisfies DCC.

Now let $(X,B)$ be a pair and $S$ a component of 
$\rddown{B}$. Let $S^\nu \to S$ be the normalization. 
Following a suggestion of Koll\'ar, 
we will show that the pullback of $K_X+B$ to $S^{\nu}$ can be canonically written 
as $K_{S^\nu}+B_{S^\nu}$ for some  $B_{S^\nu}\ge 0$ which is called the \emph{different}. 
Moreover, if $(X,B)$ is lc outside a codimension $3$ closed subset and if 
the coefficients of $B$ belong to $\Lambda$, then we show $B_{S^\nu}$ is a boundary with coefficients in 
$\mathfrak{S}_\Lambda$. When there is a 
log resolution $f\colon W\to X$, it is easy to define $B_{S^\nu}$: 
let $K_W+B_W=f^*(K_X+B)$ and let 
$K_T+B_T=(K_W+B_W)|_T$ where $T$ is the birational transform of $S$.  
Next, let $B_{S^\nu}$ be the pushdown of $B_T$ via $T\to S^\nu$.
However, since existence of log resolutions is not known in general, we follow a different path, that is, 
that of [\ref{Kollar-sing}, Section 4.1]. 
Actually, in this paper we will need this construction 
only when $\dim X\le 3$ in which case log resolutions exist. 
 
The characteristic $0$ case of the results mentioned is due to Shokurov [\ref{Shokurov}, Corollary 3.10]. 
His idea is to cut by appropriate hyperplane sections and reduce the problem to the case when $X$ is a surface.
If the index of $K_X+S$ is $1$ one proves the claim by direct calculations on a 
resolution. If the index is more than $1$ one then uses the index $1$ cover. Unfortunately this 
does not work in positive characteristic.

\begin{prop}\label{p-adjunction-existence} 
Let $(X,B)$ be a pair, $S$ be a component of $\rddown{B}$, and $S^\nu\to S$ be the normalization.
Then there is a canonically determined $\R$-divisor $B_{S^\nu}\ge 0$  
such that 
$$
K_{S^\nu}+B_{S^\nu}\sim_\R (K_X+B)|_{S^\nu}
$$
where $|_{S^\nu}$ means pullback to ${S^\nu}$ by the induced morphism $S^\nu\to X$.
\end{prop}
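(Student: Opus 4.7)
The plan is to follow Kollár's approach in [Kollar-sing, \S 4.1] and define $B_{S^\nu}$ by prescribing its coefficient at each prime divisor $P$ of $S^\nu$, then check that the resulting Weil divisor satisfies the claimed $\R$-linear equivalence. Write $B = S + B'$ with $S$ not a component of $B'$. Since $S^\nu$ is normal, any Weil divisor on it is determined by its restriction to a big open subset, so it is enough to specify the coefficients $b_P$ in a consistent way. A key feature of this approach is that it never uses a log resolution of $(X,B)$ nor an index one cover, both of which are the ingredients missing in Shokurov's original char $0$ argument in positive characteristic.

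First I would pick a big open $U \subseteq X$ (with $X \setminus U$ of codimension $\ge 3$) on which $X$ is Cohen--Macaulay, $S$ is regular in codimension one, and $K_X + S$ behaves well in codimension one along $S$. On the open locus of $U$ where $S + B'$ has simple normal crossings, classical adjunction gives $(K_X + S + B')|_{S^\nu} = K_{S^\nu} + B'|_{S^\nu}$, which specifies $b_P$ at all but finitely many prime divisors of $S^\nu$. For the remaining $P$, I would localize at the image of $P$ in $X$, complete, and (if needed) cut by general hyperplane sections to reduce to the situation of a surface germ. On such a surface germ the Poincaré residue map $\omega_X(S)|_{S^\nu} \to \omega_{S^\nu}$ and the contribution of $B'$ produce a canonical non-negative coefficient $b_P$, computed intrinsically as in [Kollar-sing, 4.2] from the conductor ideal and the multiplicities of $B'$. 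Collecting these coefficients yields $B_{S^\nu} \ge 0$.

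The main obstacle is the global $\R$-linear equivalence $K_{S^\nu} + B_{S^\nu} \sim_\R (K_X + B)|_{S^\nu}$. Fix an $\R$-Cartier representative of $K_X + B$ and pull it back along $S^\nu \to X$ to obtain an $\R$-Cartier divisor $D$ on $S^\nu$. By construction, the coefficient of $D - (K_{S^\nu} + B_{S^\nu})$ at every prime divisor of $S^\nu$ is zero, since the Poincaré residue calculation at each codimension one point of $S^\nu$ is precisely the restriction of the chosen $\R$-Cartier representative in codimension one. Hence the difference is a Weil divisor on the normal variety $S^\nu$ with trivial codimension one part, so it is a principal $\R$-divisor coming from the choice of representative of $K_X$, giving the required $\R$-linear equivalence. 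The delicate point is justifying that the local residue computation really matches the global pullback, and it is here that following Kollár's treatment (which goes through in arbitrary characteristic) rather than Shokurov's index one cover method is essential.
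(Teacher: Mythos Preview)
Your proposal is essentially a sketch of Koll\'ar's construction [\ref{Kollar-sing}, \S 4.1--4.2] carried out directly for $\R$-coefficients. The paper takes a different and more modular route: it treats the $\Q$-Cartier case as a black box (citing [\ref{Kollar-sing}, 4.2 and 4.5]) and then focuses entirely on passing from $\Q$-coefficients to $\R$-coefficients. The key observation in the paper is that the assignment $\Delta \mapsto \Delta_{S^\nu}$, defined by Koll\'ar for rational $\Delta$ with $K_X+\Delta$ $\Q$-Cartier, is an affine map on a rational affine subspace $V_\Q$ of the space of divisors supported on $\Supp B$; one then extends this affine map to the real affine space $V$ containing $B$, and defines $B_{S^\nu}$ as the image of $B$. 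Effectivity and the $\R$-linear equivalence then follow immediately by writing $B=\sum r_j \Delta^j$ as a convex combination of rational points $\Delta^j\in V_\Q$ and using that everything is already known for each $\Delta^j$.

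Your approach never isolates the $\R$-versus-$\Q$ issue: you implicitly assume that the local residue/conductor computation of [\ref{Kollar-sing}, 4.2], stated there for $\Q$-Cartier $K_X+\Delta$, extends verbatim to $\R$-coefficients, and that the resulting coefficients patch to give the global $\R$-linear equivalence. This is true, but it is precisely what needs to be argued, and the cleanest way to argue it is the paper's: linearity of the local formula in the coefficients of $B$ is exactly the statement that $\Delta\mapsto\Delta_{S^\nu}$ is affine. Your final paragraph also has a small logical slip: if the difference $D-(K_{S^\nu}+B_{S^\nu})$ has all coefficients zero then it is the zero divisor, not merely a principal one; the $\sim_\R$ rather than $=$ arises only from the ambiguity in choosing representatives of $K_X$ and $K_{S^\nu}$, and this is handled automatically in the paper's approach because each $K_{S^\nu}+\Delta^j_{S^\nu}\sim_\Q (K_X+\Delta^j)|_{S^\nu}$ is already known from Koll\'ar.
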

\begin{proof}
If $K_X+B$ is $\Q$-Cartier, then the statement is proved in 
[\ref{Kollar-sing}, 4.2 and 4.5]. In fact, [\ref{Kollar-sing}, 4.2] defines $\Delta_{S^\nu}$ 
in general when $\Delta$ is a $\Q$-divisor with arbitrary rational coefficients,  
$S$ is a component of $\Delta$ with coefficient $1$, and $K_X+\Delta$ is $\Q$-Cartier (but $\Delta_{S^\nu}$ is not effective in general).

Let $U$ be the 
$\R$-vector space generated by the components of $B$.  There is a    
rational affine subspace $V$ of $U$ containing $B$ and with minimal dimension. 
Since $V$ has minimal dimension, $\Delta-B$ is supported in the irrational part of 
$B$ for every $\Delta\in V$. Thus the coefficient of $S$ in $\Delta$ is $1$ 
for every $\Delta\in V$.

Let $V_\Q$ be the underlying $\Q$-affine space of $V$.  Let 
$$
W_\Q=\{\Delta_{S^\nu} \mid \Delta\in V_\Q\}
$$
If $\Delta=\sum r_j\Delta^j$ where $r_j> 0$ is rational, $\sum r_j=1$, and $\Delta^j\in V_\Q$, then 
the construction of [\ref{Kollar-sing}, 4.2] shows that $\Delta_{S^\nu}=\sum r_j\Delta^j_{S^\nu}$. 
Therefore, $W_\Q$ is a $\Q$-affine space and the map $\alpha \colon V_\Q\to W_\Q$ 
sending $\Delta$ to $\Delta_{S^\nu}$ is an affine map. Letting $W$ be the $\R$-affine space generated by $W_\Q$, 
we get an induced affine map $V\to W$ which sends $B$ to some element $B_{S^\nu}$. 
Writing $B=\sum r_j\Delta^j$ where $r_j> 0$, $\sum r_j=1$, and $0\le \Delta^j\in V_\Q$, we see that  
$B_{S^\nu}=\sum r_j\Delta^j_{S^\nu}\ge 0$. Moreover, by construction 
$$
K_{S^\nu}+B_{S^\nu}=\sum r_j(K_{S^\nu}+\Delta^j_{S^\nu})\sim_\R \sum r_j(K_X+\Delta^j)|_{S^\nu} =(K_X+B)|_{S^\nu}
$$\\   
\end{proof}

Note that in general $B_{S^\nu}$ is not a boundary, i.e. its coefficients may not 
be in $[0,1]$. 

\begin{prop}\label{p-adjunction-DCC}
Let $\Lambda\subseteq [0,1]$ be a DCC set of real numbers. 
Let $(X,B)$ be a pair, $S$ be a component of $\rddown{B}$, $S^\nu\to S$ be the normalization, 
and $B_{S^\nu}$ be the divisor given by Proposition \ref{p-adjunction-existence}.   
Assume that 

$\bullet$ $(X,B)$ is lc outside a codimension $3$ closed subset, and 
 
$\bullet$ the coefficients of $B$ are in $\Lambda$.\\

Then $B_{S^\nu}$ is a boundary with coefficients in $\mathfrak{S}_\Lambda$. 
More precisely: write $B=S+\sum_{i\ge 2} b_iB_i$, 
let $V^\nu$ be a prime divisor on $S^\nu$ and let $V$ be its image on $S$;
then there exists $m\in\N\cup \{\infty\}$ depending only on $X,S$ and $V$, and there exist
nonnegative integers $l_i$ depending only on $X,S,B_i$ and $V$, such that 
the coefficient of $V^\nu$ in $B_{S^\nu}$ is equal to 
$$
\frac{m-1}{m}+\sum_{i\ge 2} \frac{l_ib_i}{m}
$$
\end{prop}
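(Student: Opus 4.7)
My plan is to combine the affine linearity of the different established in Proposition~\ref{p-adjunction-existence} with an explicit local computation in the $\Q$-Cartier case. The assertion is local on $S^\nu$ at the generic point of $V^\nu$, so I may freely shrink $X$. The image $V$, being dominated by a prime divisor on $S^\nu$, has codimension exactly $2$ in $X$; hence the assumption that $(X,B)$ is lc outside codimension $3$ guarantees lc at the generic point of $V$, which will be used for the upper bound on coefficients.

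First, from the proof of Proposition~\ref{p-adjunction-existence}, the affine map $\alpha\colon V_\Q \to W_\Q$, $\Delta \mapsto \Delta_{S^\nu}$, extends affinely to $V \to W$. In particular, the coefficient of $V^\nu$ in $\Delta_{S^\nu}$ is an affine function of the coefficients of $\Delta$ as $\Delta$ varies in $V$, and it is determined by its values on $\Q$-boundaries in $V_\Q$. Writing $B = \sum_j r_j \Delta^j$ with $r_j \in (0,1]\cap \Q$, $\sum r_j = 1$, and each $\Delta^j$ a $\Q$-boundary in $V_\Q$, we have $B_{S^\nu} = \sum_j r_j \Delta^j_{S^\nu}$, so it suffices to determine this affine function at rational test points.

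Second, for any $\Q$-boundary $\Delta \in V_\Q$, $K_X + \Delta$ is $\Q$-Cartier. Using [\ref{Kollar-sing}, \S4.1] there exist $m \in \N \cup \{\infty\}$ depending only on $(X, S, V)$ and, for each $B_i$ with $i \ge 2$, a nonnegative integer $l_i$ depending only on $(X, S, B_i, V)$, such that
\[
\mult_{V^\nu}(\Delta_{S^\nu}) = \frac{m-1}{m} + \sum_{i \ge 2} \frac{l_i\, d_i}{m},
\]
where $d_i$ is the coefficient of $B_i$ in $\Delta$. Since $\dim X \le 3$ in our intended applications, one can alternatively carry out this computation directly on a log resolution $f\colon W \to X$ (which exists by \ref{ss-resolution}) by restricting $K_W + B_W = f^*(K_X + B)$ to the birational transform $T$ of $S$ and pushing forward along $T \to S^\nu$; the integers $m$ and $l_i$ then emerge from the combinatorics of $T$ and the $f$-exceptional components meeting the preimage of $V$.

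Since the affine function from the first step and the expression $\frac{m-1}{m} + \sum_{i\ge 2} \frac{l_i d_i}{m}$ agree on the dense set of rational points of $V_\Q$, they agree identically. Evaluating at $B$ yields the claimed formula, placing the coefficient of $V^\nu$ in $B_{S^\nu}$ in $\mathfrak{S}_\Lambda$; the upper bound $\le 1$ required by the definition of $\mathfrak{S}_\Lambda$ is forced by the lc hypothesis at the generic point of $V$ via adjunction. The main obstacle is justifying the explicit rational-case formula in characteristic $p$: Shokurov's classical argument passes through the index-$1$ cyclic cover of $X$ along $S$ at $V$, which has degree $m$ and may fail to be tame when $p \mid m$. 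This is bypassed either by the characteristic-free treatment of the different in [\ref{Kollar-sing}] or, in our low-dimensional setting, by the direct log-resolution computation sketched above.
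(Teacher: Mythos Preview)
Your outline matches the paper's: reduce from $\R$-coefficients to $\Q$-coefficients via the affine linearity of the different (Proposition~\ref{p-adjunction-existence}), then establish the explicit formula in the $\Q$-Cartier case. But the second step is the entire content of the proposition, and you have not proved it; you have cited it. Your appeal to [\ref{Kollar-sing}, \S4.1] gives the construction of $\Delta_{S^\nu}$ and (via [\ref{Kollar-sing}, 4.5]) that it is a boundary under lc hypotheses, but it does not hand you the specific shape $\tfrac{m-1}{m}+\sum \tfrac{l_i d_i}{m}$ with $m$ depending only on $(X,S,V)$ and $l_i\in\Z^{\ge 0}$ depending only on $(X,S,B_i,V)$. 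That is precisely what the paper spends its effort on: it localizes at the generic point of $V$ to a $2$-dimensional excellent scheme $\Spec\mathcal{O}_{X,V}$, takes the log minimal resolution (which exists for excellent surfaces), uses the lc hypothesis to force the dual graph of $(X,S)$ to be a chain, identifies $m$ as the determinant of the intersection matrix, and reads off the $l_i$ via Cramer's rule applied to the pullback of each $B_i$. This is exactly the characteristic-free replacement for Shokurov's index-one cover that you correctly flag as problematic, but you do not actually supply it.

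There is also a smaller point in your reduction step. The paper does not merely write $B=\sum r_j\Delta^j$ with $\Delta^j$ rational; it invokes Lemma~\ref{l-lc-codim2} to arrange that each $(X,\Delta^j)$ is still lc outside codimension $3$, so that the $\Q$-Cartier case of the proposition applies verbatim to every $\Delta^j$. Your version uses arbitrary rational test points, which is fine for determining the affine function, but then you cannot simply quote the $\Q$-Cartier case of the proposition at those points---you need the raw formula for the coefficient, independent of any lc assumption on $(X,\Delta^j)$, and that again is what the explicit surface computation delivers. Your alternative ``direct log-resolution computation'' in $\dim X\le 3$ would ultimately land in the same place: you still have to analyse the exceptional configuration over $V$ on a surface to extract $m$ and the $l_i$.
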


\begin{lem}\label{l-lc-codim2}
Let $(X,B)$ be a pair which is lc outside a codimension $3$ closed subset. 
Then we can write $B=\sum r_jB^j$ where $r_j> 0$, $\sum r_j=1$, $B^j$ are $\Q$-boundaries, 
and $(X,B^j)$ are lc outside a codimension $3$ closed subset. 
\end{lem}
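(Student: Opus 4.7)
The plan is to realize $B$ as a point in a rational polytope of admissible lc boundaries, and take $B^j$ to be rational vertices used in a convex decomposition.

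Let $V$ be the finite-dimensional $\R$-vector space of $\R$-divisors supported on the components of $B$, with its natural $\Q$-structure given by the prime components, and let $W\subset V$ be the subspace of $\R$-Cartier divisors, which is rational. The set of $\Delta\in V$ with $K_X+\Delta$ being $\R$-Cartier is the affine subspace $F=B+W$. To see $F$ is a rational affine subspace, write $K_X+B=\sum_k c_kL_k$ with $c_k\in\R$ and $L_k$ integral Cartier. Equating Weil-divisor coefficients on each prime not lying in $\Supp B$ yields a system of $\Q$-linear equations in $(c_k)$ admitting a real solution, hence a rational solution $(c'_k)$; then $\Delta':=\sum_k c'_kL_k-K_X$ is a rational Weil divisor supported on components of $B$ with $K_X+\Delta'=\sum_k c'_kL_k$ being $\Q$-Cartier, so $\Delta'\in F\cap V_{\Q}\ne\emptyset$.

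Inside $F$, I would cut out the admissible lc polytope. Take a log resolution $f\colon Y\to X$ of $(X,\Supp B)$, available in dimension $\le 3$ by \ref{ss-resolution}. For each prime divisor $E$ on $Y$, the log discrepancy $a(E,X,\Delta)$ is a $\Q$-affine function of $\Delta\in F$. Because $f^{-1}(\Supp B)\cup\mathrm{Exc}(f)$ is snc on $Y$, the condition that $(X,\Delta)$ be lc at a codimension-$\le 2$ point $\eta$ of $X$ is equivalent to $a(E,X,\Delta)\ge 0$ for the finitely many primes $E$ on $Y$ with $\eta\in f(E)$. Hence the condition that $(X,\Delta)$ is lc outside codimension $3$, together with the boundary inequalities that every coefficient of $\Delta$ lie in $[0,1]$, cuts out a rational polytope $\mathcal{P}\subset F$ containing $B$ by hypothesis.

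Finally, $B$ can be written as a convex combination $B=\sum_j r_j B^j$ of finitely many rational vertices $B^j$ of $\mathcal{P}$ with $r_j>0$ and $\sum_j r_j=1$. Each $B^j$ is then a $\Q$-boundary with $K_X+B^j$ being $\R$-Cartier (in fact $\Q$-Cartier) and $(X,B^j)$ lc outside codimension $3$, as required. The main obstacle is establishing the rational polytope structure in the second paragraph; it rests on the availability of log resolution in dimension $\le 3$ and on the rationality of $F$ obtained in the first paragraph.
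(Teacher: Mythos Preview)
Your argument is correct when a global log resolution of $(X,\Supp B)$ exists, and in that case it is essentially the approach the paper itself mentions: once you have the resolution, the log discrepancies $a(E,X,\Delta)$ are rational affine functions of $\Delta$, and the lc-outside-codimension-$3$ condition together with the $[0,1]$ bounds cuts out a rational polytope inside the rational affine space $F$.

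The gap is that the lemma carries no dimension hypothesis, but your appeal to \ref{ss-resolution} only yields a log resolution when $\dim X\le 3$. The paper proves the lemma in arbitrary dimension and so cannot take a global resolution. Its workaround is to observe that $(X,B)$ is log smooth outside some codimension-$2$ closed subset $Y$, and then, for each irreducible component $R$ of $Y$, to localize at the generic point of $R$. This replaces $X$ by the $2$-dimensional normal excellent scheme $\Spec\mathcal{O}_{X,R}$, where resolution of singularities is classical; there one gets a rational polytope $\mathcal{L}_R$ of boundaries which are lc near the generic point of $R$. Intersecting the finitely many $\mathcal{L}_R$ (inside the polytope $\mathcal{P}$ of $[0,1]$-valued boundaries in $F$) gives the desired rational polytope $\mathcal{L}$. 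In short, the paper trades one global resolution in dimension $d$ for finitely many local resolutions in dimension $2$. For the applications elsewhere in the paper, which are all in dimension $\le 3$, your proof already suffices.
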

\begin{proof}
As in the proof of Proposition \ref{p-adjunction-existence}, there is a  
rational affine space $V$ of divisors, containing $B$, such that  
$K_X+\Delta$ is $\R$-Cartier for every $\Delta\in V$. The set of those $\Delta\in V$ 
with coefficients in $[0,1]$ is a rational polytope $\mathcal{P}$ containing $B$.   
We want to show that there is a rational polytope $\mathcal{L}\subseteq \mathcal{P}$, 
containing $B$, such that $(X,\Delta)$ is lc outside a fixed codimension $3$ closed subset, 
for every $\Delta\in  \mathcal{L}$.
If $(X,B)$ has a log resolution, then existence of $\mathcal{L}$ can be proved using 
the same arguments as in [\ref{Shokurov}, 1.3.2]. 

The pair $(X,B)$ is log smooth outside some codimension $2$ closed subset $Y$. 
In particular, $(X,\Delta)$ is lc outside $Y$, for every $\Delta\in  \mathcal{P}$. 
Shrinking $X$ we can assume $Y$ is of pure codimension $2$ and that $(X,B)$ is lc everywhere. 
Assume that  
for each component $R$ of $Y$, there is a rational polytope $\mathcal{L}_R\subseteq \mathcal{P}$, 
containing $B$, such that $(X,\Delta)$ is lc near the generic point of $R$, 
for every $\Delta\in  \mathcal{L}_R$. Then we can take $\mathcal{L}$ to be any rational 
polytope, containing $B$, inside the intersection of the  $\mathcal{L}_R$. 

Existence of $\mathcal{L}_R$ is a local problem near the generic point of $R$. 
By replacing $X$ with $\Spec \mathcal{O}_{X,R}$ we are reduced to the situation in which 
$X$ is a normal excellent scheme of dimension $2$ (see 
[\ref{Kollar-sing}, 3.3] for notion of lc pairs in this setting). 
Now $(X,B)$ has a log resolution (cf. see [\ref{Shafarevich}, page 28 and following remarks, and page 72]).  
So existence of $\mathcal{L}_R$ can be proved again as in [\ref{Shokurov}, 1.3.2].\\ 
\end{proof}

\begin{proof}(of Proposition \ref{p-adjunction-DCC})
Assume that the proposition holds whenever $K_X+B$ is $\Q$-Cartier. 
In the general case, that is, when $K_X+B$ is only $\R$-Cartier,  
we can use Lemma \ref{l-lc-codim2} to write 
$B=\sum r_jB^j$ where $r_j> 0$, $\sum r_j=1$, $B^j$ are $\Q$-boundaries, and $(X,B^j)$ are lc 
outside a codimension $3$ closed subset. Moreover, we can assume $S$ is a component of 
$\rddown{B^j}$ for each $j$ since we can choose the $B^j$ so that $B-B^j$ are supported on the 
irrational part of $B$. 
Then $B_{S^\nu}=\sum r_jB^j_{S^\nu}$ (see the proof of Proposition \ref{p-adjunction-existence}). 
Write $B^j=S+\sum_{i\ge 2} b^j_iB_i$.  
By assumption, there exists $m\in\N\cup \{\infty\}$ depending only on $X,S$ and $V$, and 
there exist nonnegative integers $l_i$ depending only on $X,S,B_i$ and $V$, such that 
the coefficient of $V^\nu$ in $B^j_{S^\nu}$ is equal to  
$$
\frac{m-1}{m}+\sum \frac{l_ib^j_i}{m}
$$
Therefore, the coefficient of $V^\nu$ in $B_{S^\nu}$ is equal to  
$$
\frac{m-1}{m}+\sum_j r_j (\sum_i \frac{l_ib^j_i}{m})=\frac{m-1}{m}+\sum_i l_i (\sum_j \frac{r_jb^j_i}{m})=\frac{m-1}{m}+\sum_i \frac{l_ib_i}{m}
$$

So from now on we can assume that $K_X+B$ is $\Q$-Cartier.  Determining the coefficient of 
$V^\nu$ in $B_{S^\nu}$ is a local problem near the generic point of $V$. 
As in the proof of Lemma \ref{l-lc-codim2}, 
we can replace $X$ with $\Spec \mathcal{O}_{X,V}$ hence 
assume that $X$ is a normal excellent scheme of 
dimension $2$, $S$ is one-dimensional, and $V$ is a closed point. Now $(X,B)$ is lc and 
the fact that $B_{S^\nu}$ is a boundary is proved in [\ref{Kollar-sing}, 4.5].

Assume that $X$ is regular at $V$. If $S$ is not regular at $V$, then $B=S$ 
and the coefficient of $V^\nu$ in $B_{S^\nu}$ is equal to $1$ (by [\ref{Kollar-sing}, 3.45] or 
by blowing up $V$ and working on the blow up). 
But if $S$ is regular at $V$, then $S^\nu \to S$ is an isomorphism, $(K_X+S)|_{S^\nu}=K_{S^\nu}$, 
$m=1$, and $B_{S^\nu}=B|_{S^\nu}$. From these we can get the formula for the coefficient of $V^\nu$ 
as claimed. 
Thus we can assume $X$ is not regular at $V$.

Since $(X,B)$ is lc, $(X,S)$ is numerically lc (see [\ref{Kollar-sing}, 3.3] for definition of numerical lc 
which is the same as lc except that $K_X+S$ may not be $\Q$-Cartier). If  
 $(X,S)$ is not numerically plt, i.e. if there is an exceptional divisor over $V$ whose 
log discrepancy with respect to $(X,S)$ is $0$, then in fact $B=S$, and 
the coefficient of $V^\nu$ in $B_{S^\nu}$ is equal to $1$ 
by [\ref{Kollar-sing}, 3.45]. Thus we can assume $(X,S)$ is numerically plt 
which in particular implies that $S$ is regular and that $S^\nu \to S$ is an isomorphism, 
by [\ref{Kollar-sing}, 3.35].

Let $f\colon Y\to X$ be a log minimal resolution of $(X,S)$ as in [\ref{Kollar-sing}, 2.25] 
and let $S^\sim$ be the birational transform of $S$. Then $S^\sim\to S$ is an isomorphism 
and the extended dual 
graph of the resolution is of the form 
$$
\xymatrix{
\bullet \ar@{-}[r] & c_1 \ar@{-}[r] & c_2 \ar@{-}[r] &\cdots \ar@{-}[r] & c_r 
}
$$ 
where $\bullet$ corresponds to $S^\sim$, $c_i=-E_i^2$, and $E_1,\dots, E_r$ are the exceptional 
curves of $f$.
Let $M=[-E_i\cdot E_j]$ be the minus of the intersection matrix of the resolution, and let 
$m=\det M$. Then by [\ref{Kollar-sing}, 3.35.1] we have 
$$
K_Y+S^\sim+\sum e_jE_j\equiv 0/X
$$ 
for certain $e_j>0$ and $e_1=\frac{m-1}{m}$. 

Let $D\neq 0$ be an effective Weil divisor on $X$ with coefficients in $\N$. Let $d_i$ be the numbers so that 
$D^\sim+\sum d_iE_i\equiv 0/X$ where $D^\sim$ is the birational transform of $D$. 
The $d_i$ satisfy the equations 
$$
(\sum d_jE_j)\cdot E_t =-D^\sim\cdot E_t 
$$
Since $M$ has integer entries and the numbers $-D^\sim\cdot E_t$ are integers, 
by Cramer's rule, we can write $d_j=\frac{n_j}{m}$ for certain 
$n_j\in \N$. Applying this to $D=B_i$, we have $B_i^\sim+\sum d_{i,j}E_j\equiv 0/X$ for certain 
$d_{i,j}=\frac{n_{i,j}}{m}$ with $n_{i,j}\in \N$. But then 
$$
K_Y+B^\sim+\sum e_j'E_j\equiv 0/X
$$
where $B^\sim$ is the birational transform of $B$ and $e_j'=e_j+\sum_{i\ge 2} \frac{n_{i,j}b_i}{m}$. 
In particular, $e_1'=\frac{m-1}{m}+\sum_{i\ge 2} \frac{l_ib_i}{m}$ where we put $l_i:=n_{i,1}$. 
Now the coefficient of $V^\nu$ in $B_{S^\nu}$ is 
simply the coefficient of the divisor $e_1'E_1|_{S^\sim}$ which is nothing but $e_1'$.\\ 
\end{proof}

\section{Special termination}\label{s-st}

All the varieties and algebraic spaces in this section are 
over $k$ of char $p>0$ unless stated otherwise.

\subsection{Reduced components of boundaries of dlt pairs}

\begin{lem}\label{l-plt-normal}
Let $(X,B)$ be an lc pair of dimension $3$  and $S$ a component of 
$\rddown{B}$. Then we have: 

$(1)$ if the coefficients of $B$ are standard, then the coefficients of $B_{S^\nu}$ are also standard;

$(2)$ if $k$ has char $p>5$ and $(X,B)$ is $\Q$-factorial dlt, then $S$ is normal. 
\end{lem}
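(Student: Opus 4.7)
The plan for part (1) is to apply Proposition \ref{p-adjunction-DCC} with $\Lambda = \mathfrak{S}$ and then refine the conclusion using the very restrictive structure of the standard set. By that proposition the coefficient of any prime divisor $V^\nu \subset S^\nu$ in $B_{S^\nu}$ takes the form
\[
e_1' \;=\; \frac{m-1}{m} + \sum_{i\ge 2} \frac{l_i(n_i-1)}{m\,n_i}
\]
after writing $b_i = (n_i-1)/n_i$. The remaining task is to check that this expression, subject to the lc constraint $e_1' \le 1$, always collapses to a standard value $(N-1)/N$ with $N \in \N \cup \{\infty\}$. Following the local reduction in the proof of Proposition \ref{p-adjunction-DCC}, I would localise at the generic point of $V$ and reduce to a $2$-dimensional excellent lc germ, in which $m$ and the $l_i$ are determined by the minimal log resolution chain. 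The simplification is essentially Shokurov's classical adjunction for surface lc pairs with standard boundaries; it is a combinatorial identity on the dual graph and therefore characteristic-free.

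For part (2), the plan is to verify Serre's conditions $R_1$ and $S_2$ for $S$. Condition $R_1$ is immediate from the definition of dlt recalled in \ref{ss-pairs}: there is a closed $P \subset X$ outside of which $(X,B)$ is log smooth and which contains no lc centre; since $S$ is itself an lc centre, every codimension-$1$ point of $S$ lies in the log smooth locus, where $S$ is smooth. To check $S_2$ I would first reduce to a plt situation by replacing $B$ with $S + \epsilon(B-S)$ for small $\epsilon > 0$; the $\Q$-factorial dlt hypothesis makes $(X, S + \epsilon(B-S))$ plt with $S$ the unique component of its reduced part. By adjunction the induced pair on $S^\nu$ is then klt, and the hypothesis $\mathrm{char}\,k > 5$ ensures, as highlighted in the introduction, that its singularities are strongly $F$-regular and in particular Cohen--Macaulay.

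The main obstacle is the final step: transferring $S_2$ from $S^\nu$ back to $S$, equivalently showing that the normalization $\nu \colon S^\nu \to S$ is an isomorphism. In characteristic $0$ this would follow from Kawamata--Viehweg vanishing applied to $-S$, but that tool is unavailable here. The route I would take is the one developed in [\ref{HX}]: use the strong $F$-regularity of the different on $S^\nu$ to build a Frobenius splitting on a neighbourhood of $S$ in $X$ that is compatible with $S$, and use it to force the conductor sequence
\[
0 \to \mathcal{O}_S \to \nu_*\mathcal{O}_{S^\nu} \to Q \to 0
\]
to have $Q = 0$. This is precisely the step where $p > 5$ is essential, and since the heart of the argument is carried out in [\ref{HX}], I expect the cleanest presentation to be a direct appeal to the corresponding normality statement there.
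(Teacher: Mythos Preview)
Your approach to part~(1) is correct and is essentially the content of the reference the paper cites, namely Koll\'ar [\ref{Kollar-sing}, Corollary~3.45]. The paper simply invokes that result directly rather than rederiving it through Proposition~\ref{p-adjunction-DCC} and a combinatorial check, but the underlying argument is the same.

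For part~(2), your strategy is the right one and matches the paper's, but there is a gap in the reduction step. You replace $B$ by $S + \epsilon(B-S)$ with $\epsilon > 0$; the resulting boundary has coefficients $\{1\} \cup \{\epsilon b_i\}$, which are not standard, and so the different $B'_{S^\nu}$ produced by adjunction will not have standard coefficients either. The strong $F$-regularity input from [\ref{HX}, Theorem~3.1] that you need is stated only for klt surface pairs with \emph{standard} boundary, so it does not apply to your pair. (The sentence from the introduction you quote refers to the variety $S^\nu$ itself being strongly $F$-regular, not the pair $(S^\nu, B'_{S^\nu})$; it is the latter that [\ref{HX}, Theorem~4.1] requires to deduce normality of $S$.)

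The fix is exactly what the paper does: discard the other components entirely and take $B = S$. Then $(X,S)$ is plt, the coefficients of $B$ are all equal to $1$ and hence standard, and part~(1) --- which you did not use in your argument for~(2) --- tells you that $B_{S^\nu}$ has standard coefficients. Now [\ref{HX}, Theorem~3.1] gives strong $F$-regularity of $(S^\nu, B_{S^\nu})$, and [\ref{HX}, Theorem~4.1] yields normality of $S$. The $R_1/S_2$ framework you set up is not needed: the whole question is whether $\nu$ is an isomorphism, and that is settled in one step by the $F$-regularity machinery once the coefficients are standard.
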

\begin{proof}
(1) This follows from Koll\'ar [\ref{Kollar-sing}, Corollary 3.45] (see also [\ref{Kollar-sing}, 4.4]). 
  
(2) We may assume $B=S$ by discarding all the other components, in particular, $(X,B)$ is plt 
hence $({S^\nu},B_{S^\nu})$ is klt. 
By (1), $B_{S^\nu}$ has standard coefficients. By [\ref{HX}, Theorem 3.1], $({S^\nu},B_{S^\nu})$  
is actually strongly $F$-regular. Therefore, $S$ is normal by [\ref{HX}, Theorem 4.1].\\
\end{proof}

\subsection{Pl-extremal rays}\label{ss-pl-ext-rays} 
 Let $(X,B)$ be a projective $\Q$-factorial dlt pair of dimension $3$. 
A $K_X+B$-negative extremal ray $R$ is called a \emph{pl-extremal ray} if  
$S\cdot R<0$ for some component $S$ of $\rddown{B}$. This is named after  
Shokurov's pl-flips. 
 
Assume that $k$ has char $p>5$. 
Now as in \ref{ss-ext-rays-II}, assume that $\mathcal{C}=\overline{NE}(X/Z)$ for 
some projective contraction $X\to Z$ such that 
$K_X+B\equiv P+M/Z$ where $P$ is nef$/Z$ and $M\ge 0$, or $\mathcal{C}=N^\perp$ 
for some nef and big $\Q$-divisor $N$. Let $R$ be a $K_X+B$-negative pl-extremal ray 
of $\mathcal{C}$. By \ref{ss-ext-rays-II}, we can find a $\Q$-boundary $\Delta$ and 
an ample $\Q$-divisor 
$H$ such that $\rddown{\Delta}=S$, $(K_X+\Delta)\cdot R<0$ and $L=K_X+\Delta+H$ is nef and 
big with $L^\perp=R$. Let $X\to V$ be the contraction associated to $L$ which contracts 
$R$ to an algebraic space. Every curve contracted by $X\to V$ is inside $S$. 
So the exceptional locus $\mathbb{E}(L)$ of $L$ is inside $S$. Thus 
$L$ is semi-ample by \ref{ss-good-exc-locus}. 
Therefore $X\to V$ is a projective contraction. 
In other words, pl-extremal rays can be contracted by projective morphisms. 
This was proved in 
[\ref{HX}, Theorem 5.4] when $K_X+B$ is pseudo-effective. The 
extremal rays that appear below are often pl-extremal rays. 

If $X\to V$ is a divisorial contraction put $X'=V$ but if it is a flipping contraction 
assume $X\bir X'/V$ is its flip. Then it is not hard to see that in any case $X'$ is $\Q$-factorial, 
by the following argument [\ref{Xu}]: we treat the divisorial case; 
the flipping case can be proved similarly. We can assume that $B$ is a $\Q$-boundary and $\Delta=B$. 
Let $D'$ be a prime divisor on $X'$ and 
$D$ its birational transform on $X$. 
There are rational numbers $\epsilon>0$ and $\delta$ such that 
$M:=K_X+B+H+\epsilon D+\delta S$ is nef and big, $M\equiv 0/V$, $H+\epsilon D+\delta S$ is ample, 
and $\mathbb{E}(M)=\mathbb{E}(L)=S$. Since $M|_{S}$ 
is semi-ample, $M$ is semi-ample by Theorem \ref{t-Keel-1}. That is, $M$ 
is the pullback of some ample divisor $M'$ on $X'$. But then $\epsilon D'=M'-L'$ is 
$\Q$-Cartier hence $D'$ is $\Q$-Cartier.

\subsection{Special termination}\label{ss-special-termination} 
The following important result is proved just like in characteristic $0$. 
We include the proof for convenience.

\begin{prop}\label{p-st} 
Let $(X,B)$ be a projective $\Q$-factorial dlt pair of dimension $3$ over $k$ of char $p>5$. 
Assume that we are given an LMMP on $K_X+B$, say $X_i\bir X_{i+1}/Z_i$ where $X_1=X$ 
and each $X_i\bir X_{i+1}/Z_i$ is a flip, or a divisorial contraction with $X_{i+1}=Z_i$. 
Then after finitely many steps, each remaining step of the LMMP is an isomorphism near 
the lc centres of $(X,B)$.
\end{prop}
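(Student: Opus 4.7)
The plan is to reduce via adjunction to termination of an induced MMP on each component $S$ of $\lfloor B\rfloor$, which terminates because the Picard number of a surface can only drop finitely many times.

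First I would argue that after finitely many steps, no subsequent step contracts a component of $\lfloor B\rfloor$. Indeed, flips are isomorphisms in codimension one and so preserve the set of such components, while each divisorial contraction removes at most one; since $\lfloor B\rfloor$ has only finitely many components, only finitely many contractions can hit one. From this point on, for each component $S$ of $\lfloor B\rfloor$ its birational transform $S_i\subset X_i$ is well-defined and remains a component of $\lfloor B_i\rfloor$ with coefficient one. By Lemma \ref{l-plt-normal}(2) each $S_i$ is normal, so Proposition \ref{p-adjunction-existence} produces dlt surface pairs $(S_i,B_{S_i})$ with $K_{S_i}+B_{S_i}=(K_{X_i}+B_i)|_{S_i}$.

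Next I would analyse the induced maps $S_i\bir S_{i+1}$. Each step $X_i\bir X_{i+1}$ is $(K_{X_i}+B_i)$-negative in the sense of \ref{ss-divisors}; pulling back to a common resolution of $X_i,X_{i+1}$ and restricting to $S$ shows that $S_i\bir S_{i+1}$ is $(K_{S_i}+B_{S_i})$-negative. Since the flipping or contracting locus in $X_i$ has codimension $\geq 2$ while $S_i$ is a prime divisor, this restriction is a genuine birational map of normal surfaces whose inverse contracts no divisors, by the negativity lemma of \ref{ss-negativity}. Any such $(K_{S_i}+B_{S_i})$-negative birational map of $\Q$-factorial dlt surfaces, if not an isomorphism, factors as a nontrivial sequence of contractions of $(K+B_{S_i})$-negative curves, each strictly lowering the Picard number. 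Hence the number of indices $i$ at which $S_i\bir S_{i+1}$ fails to be an isomorphism is at most $\rho(S_1)-1$, and summing over the finitely many components of $\lfloor B\rfloor$ yields a finite total.

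Once the induced map is an isomorphism on every component of $\lfloor B\rfloor$, the flipping or contracting locus of the corresponding $X_i\bir X_{i+1}$ is disjoint from $\lfloor B_i\rfloor$, and hence the step is an isomorphism in some neighborhood of $\lfloor B_i\rfloor$. Since $(X,B)$ is dlt, every lc centre is an irreducible component of an intersection of components of $\lfloor B\rfloor$, so this gives the desired statement. The delicate point will be carefully checking that the induced surface map really does factor as a $(K+B_{S_i})$-MMP with strictly dropping Picard number --- in particular, ruling out any exotic extraction of surface divisors coming from flipped $3$-fold curves --- which should follow from the negativity lemma applied on $S_i$ combined with the $(K_{X_i}+B_i)$-negativity built into each step of the LMMP.
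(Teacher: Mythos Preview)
Your approach has a genuine gap at the heart of the argument: the claim that the inverse of $S_i\bir S_{i+1}$ contracts no divisors is false in general, and so the Picard-number drop you rely on does not hold. When $X_i\bir X_{i+1}$ is a flip and the flipping curve lies in $S_i$, the \emph{flipped} curve $C^+\subset X_{i+1}$ may very well lie in $S_{i+1}$ (this happens, for instance, whenever $S_{i+1}\cdot C^+<0$). In that case $S_{i+1}\bir S_i$ contracts $C^+$, so the induced surface map is not a morphism in either direction; rather, $S_i$ and $S_{i+1}$ both map to the image of $S_i$ in $Z_i$, one contracting the flipping curve and the other extracting the flipped one. Thus $\rho(S_{i+1})$ can equal or exceed $\rho(S_i)$. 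The negativity lemma gives $(K_{S_i}+B_{S_i})$-non-positivity of the map, but it says nothing to forbid such extractions --- indeed the extracted curve has \emph{positive} coefficient in $B_{S_{i+1}}$, consistent with negativity. You flag exactly this issue in your last paragraph but do not resolve it; and it cannot be resolved along these lines.

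The paper's proof avoids Picard numbers entirely and instead exploits the DCC property of the adjunction coefficients (Proposition~\ref{p-adjunction-DCC}). It proceeds by dimension of lc centre: for one-dimensional centres $C$ it applies adjunction twice to obtain boundaries $B_{i,C}$ on the fixed curve $C$ whose coefficients lie in a DCC set and are non-increasing in $i$, hence stabilize. Once the LMMP is an isomorphism near all one-dimensional centres, any curve $D_i\subset S_i$ in the flipped locus has log discrepancy $a(D_i,S_1,B_{S_1})<1$ and its centre on $S_1$ lies in the klt locus of $(S_1,B_{S_1})$; there are only finitely many such prime divisors on birational models of $S_1$, and again the DCC forces their coefficients to stabilize. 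This is what rules out infinitely many extractions on the surface --- not a Picard-number count.
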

\begin{proof}
 There are only finitely many lc centres and no new one 
can be created in the process, so we may assume that the LMMP does not contract any lc centre.
In particular, we can assume that the LMMP is an isomorphism near each lc centre of 
dimension zero. 

Now let $C$ be an lc centre of dimension one. Since $(X,B)$ is dlt, $C$ is a component 
of the intersection of two components $S,S'$ of $\rddown{B}$. Let $C_i,S_i\subset X_i$ be the birational 
transforms of $C,S$. Applying Lemma \ref{l-plt-normal}, we can see that $C_i,S_i$ are normal.
By adjunction, 
we can write $(K_{X_i}+B_i)|_{S_i}=K_{S_i}+B_{S_i}$ where 
the coefficient of $C_i$ in $B_{S_i}$ is one. Applying adjunction once more, 
we can write the pullback of $K_{S_i}+B_{S_i}$ to $C_i$ as $K_{C_i}+B_{C_i}$
for some boundary $B_{C_i}$. Since $C_i\simeq {C_{i+1}}$, we will use the notation 
$(C,B_{i,C})$ instead of $({C_i},B_{C_i})$. 
Since each step of the LMMP makes the divisor $K_X+B$ "smaller", 
$$
K_{C}+B_{i,C}\ge K_{C}+B_{i+1,C}
$$  
hence $B_{i,C}\ge B_{i+1,C}$ for every $i$. By Propositions 
\ref{p-adjunction-DCC}, the 
coefficients of $B_{S_i}$ and $B_{i,C}$ belong to some fixed DCC set. 
Therefore $B_{i,C}=B_{i+1,C}$ for every $i\gg 0$ which implies that 
after finitely many steps, each remaining step of the LMMP 
is an isomorphism near $C_i$.   

From now on we may assume that all the steps of the LMMP are flips.
Let $S$ be any lc centre of dimension $2$, i.e. a component of $B$ with coefficient one.  
If $S_i$ intersects the exceptional locus $E_i$ of $X_i\to Z_i$, then no other component of $\rddown{B_i}$ 
can intersect the exceptional locus: assume that another component $T_i$ intersects the exceptional 
locus; if either $S_i$ or $T_i$ contains  $E_i$, then $S_i\cap T_i$ intersects $E_i$; 
but $S_i\cap T_i$ is a union of lc centres of dimension one and this contradicts the 
last paragraph; so none of $S_i,T_i$ contains $E_i$. But then both contain the exceptional 
locus of $X_{i+1}\to Z_i$ and similar arguments give a contradiction. 

Assume $D_i\subset S_i$ is a component of the exceptional locus of  
$X_i\to Z_{i-1}$ where $i>1$. Then the log discrepancy of $D_i$ with 
respect to $(S_1,B_{S_1})$ is less than one. Moreover, we can assume that the generic point of the centre of $D_i$ 
on $S_1$ is inside the klt locus of $(S_1,B_{S_1})$ by the last paragraph. But there can be at most finitely 
many such $D_i$ (as prime divisors on birational models of $S_1$). 
Since the coefficients of $D_i$ in $B_{S_i}$ belongs to a DCC set, 
the coefficient of $D_i$ stabilizes. Therefore after finitely many steps, $S_i$ 
cannot contain any component of the exceptional locus of $X_i\to Z_{i-1}$. 
So we get a sequence $S_i\bir S_{i+1}$ of birational morphisms which are isomorphisms 
 if $i\gg 0$. In particular, $S_i$ is disjoint from $E_i$ for $i\gg 0$.\\
\end{proof}

\section{Existence of log flips}\label{s-flips}

In this section, we first prove that generalized flips exist (\ref{t-flip-2}). 
Next we prove  Theorem \ref{t-flip-1} 
in the projective case, that is, when $X$ is projective. The general case of 
 Theorem \ref{t-flip-1} is proved in Section \ref{s-mmodels} where $X$ is quasi-projective.

\subsection{Divisorial and flipping extremal rays}
Let $(X,B)$ be a projective $\Q$-factorial pair of dimension $3$ over $k$ of char $p>0$, and 
let $R$ be a $K_X+B$-negative extremal ray. Assume that there is a nef and big $\Q$-divisor 
$L$ such that $R=L^\perp$.  We say $R$ is a 
\emph{divisorial extremal ray} if  $\dim \mathbb{E}(L)=2$. But we say  
$R$ is a \emph{flipping extremal ray} if $\dim \mathbb{E}(L)=1$. By \ref{ss-ext-rays-II}, 
such rays can be contracted to algebraic spaces. By \ref{ss-ext-rays}, when $K_X+B$ is pseudo-effective, 
 each $K_X+B$-negative extremal ray is either a divisorial extremal ray or a 
flipping extremal ray.
 We will show below (\ref{t-contraction}) 
that any divisorial or flipping extremal ray can actually be contracted by a projective 
morphism if $(X,B)$ is dlt and $p>5$. However, we still need contractions to algebraic spaces as an auxiliary tool.

\subsection{Existence of generalized flips}

We recall the definition of \emph{generalized flips} which was introduced in [\ref{HX}]. 
Let $(X,B)$ be a projective $\Q$-factorial pair of dimension $3$ over $k$ char $p>0$, and 
let $R$ be a $K_X+B$-negative flipping extremal ray. We say that the {generalized 
flip} of $R$ exists (see [\ref{HX}, after Theorem 5.6]) 
if there is a birational map $X\bir X^+/V$ which is an isomorphism in 
codimension one, $X^+$ is $\Q$-factorial projective, and $K_{X^+}+B^+$ is numerically 
positive on any curve contracted by $X^+\to V$.

\begin{thm}\label{t-flip-2}
Let $(X,B)$ be a projective $\Q$-factorial dlt pair of dimension $3$ over $k$ of char $p>5$.  
Let $R$ be a $K_X+B$-negative flipping extremal ray. Then 
the generalized flip of $R$ exists. 
\end{thm}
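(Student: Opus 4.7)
The plan is to reduce the existence of the generalized flip of $R$ to the pl-flip theorem [HX, Theorem 4.12] by working on a suitable $\Q$-factorial dlt model over the algebraic-space target of $R$.

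By \ref{ss-ext-rays-II}, the ray $R$ is contracted by a small projective contraction $f\colon X\to V$ to an algebraic space, with $-(K_X+B)$ ample$/V$; let $E_f$ denote its one-dimensional exceptional locus. I would first construct a $\Q$-factorial dlt model $\pi\colon Y\to X$ whose $\pi$-exceptional prime divisors $E_1,\dots,E_k$ all appear as components of $\rddown{B_Y}$ (for $B_Y:=\pi_*^{-1}B+\sum E_i$) and collectively dominate $E_f$. Since the general dlt modification \ref{cor-dlt-model} is not yet available at this stage of the paper, I would achieve this through [HX, Theorem 6.1]: write $B$ as a convex combination of $\Q$-boundaries with standard coefficients (using the polytope decompositions employed in \ref{ss-pairs} and \ref{ss-ext-rays-scaling}) and enlarge by a small multiple of an effective $\Q$-Cartier auxiliary divisor with high multiplicity along the components of $E_f$, so that the resulting standard-coefficient lc pair has non-klt centres covering $E_f$; then apply [HX, Theorem 6.1].

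Next, I would run the LMMP$/V$ on $K_Y+B_Y$ with scaling of a suitable ample$/V$ divisor, in the framework of \ref{ss-g-LMMP-scaling}. Every extremal ray $R'$ that arises is contracted over $V$, so any curve generating $R'$ lies in the exceptional locus of $Y\to V$; the divisorial part of this locus is precisely the union of the $\pi$-exceptional divisors, each of which is a component of $\rddown{B_Y}$; hence some $S\subseteq\rddown{B_Y}$ satisfies $S\cdot R'<0$, making $R'$ a pl-extremal ray. By \ref{ss-pl-ext-rays}, $R'$ is contractible by a projective morphism, and the required pl-flip exists by [HX, Theorem 4.12]. Special termination (\ref{p-st}) yields termination after finitely many steps, producing $Y^+/V$ with $K_{Y^+}+B_{Y^+}$ nef$/V$. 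Contracting the remaining $\pi$-exceptional divisors over $V$ (which, by construction and the LMMP, have become trivial on $(K_{Y^+}+B_{Y^+})$-fibres) yields the generalized flip $X^+$, with $K_{X^+}+B^+$ numerically positive on every curve contracted to $V$.

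The main obstacle is the dlt modification step: without \ref{cor-dlt-model}, one must rely on the standard-coefficient case [HX, Theorem 6.1], and the auxiliary boundary has to be engineered so that its non-klt locus covers $E_f$, the pair remains lc, and the extracted divisors are disposable at the end (so that the final contraction produces a small map $X\bir X^+$ rather than a new birational modification). Once the dlt model with the pl-property is in place, the rest is a standard pl-LMMP argument relying on \ref{ss-pl-ext-rays}, [HX, Theorem 4.12] and special termination \ref{p-st}.
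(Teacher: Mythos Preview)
Your proposal has two genuine gaps, both at the core of the argument.

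\textbf{The pl-ness claim is not justified.} You assert that since every curve $C$ generating an extremal ray $R'$ over $V$ lies in the exceptional locus of $Y\to V$, and the divisorial part of that locus consists of $\pi$-exceptional divisors which sit in $\rddown{B_Y}$, some component $S$ of $\rddown{B_Y}$ satisfies $S\cdot R'<0$. This is a non-sequitur: a curve lying inside a divisor $S$ does not force $S\cdot C<0$. What one actually needs is a relation $K_Y+B_Y\equiv M/V$ with $M\ge 0$ and $\Supp M\subseteq \rddown{B_Y}$; then any $(K_Y+B_Y)$-negative ray must be $M$-negative, hence negative on some component of $\rddown{B_Y}$. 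In your setup one only has $K_Y+B_Y=\pi^*(K_X+B)+E$ with $E$ exceptional; writing $K_X+B\equiv aS/V$ for some prime divisor $S$ with $S\cdot R<0$, the effective representative over $V$ involves the birational transform $S^\sim$, which has no reason to be a component of $\rddown{B_Y}$. The paper fixes precisely this by \emph{adding $S^\sim$ with coefficient $1$} to the boundary on the log resolution (the divisor $\Delta_W=B_W+(1-b)S^\sim$), so that $K_W+\Delta_W\equiv (a+1-b)S^\sim+F/V$ with $\Supp((a+1-b)S^\sim+F)=\rddown{\Delta_W}$.

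\textbf{The appeal to [HX, Theorem 4.12] requires standard coefficients.} That result establishes pl-flips only when the boundary has standard coefficients; your $B_Y$ inherits the coefficients of $B$, which are arbitrary. Your suggestion of writing $B$ as a convex combination of standard-coefficient boundaries does not make $B$ itself standard. The paper's proof deals with this by introducing the invariant $\zeta(X,B)$ (the number of components of $B$ with non-standard coefficient), assuming $\zeta$ is minimal among counterexamples, and then \emph{reducing} $\zeta$ at each stage: passing to $\Delta_W$ does not increase $\zeta$, and in the second LMMP (on $K_Y+B_Y$ with scaling of $(1-b)S_Y$) the required flips are flips for $K_Y+B_Y-bS_Y$, which has strictly smaller $\zeta$. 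Only when $\zeta=0$ does one invoke [HX, Theorem 4.12] directly. Without this inductive reduction, there is no available flip existence statement to cite.

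In short, the paper's two key ideas---adding the contracted divisor $S$ to the reduced boundary to force pl-ness, and the $\zeta$-induction to reduce to standard coefficients---are exactly what is missing from your outline.
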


The theorem  was proved in [\ref{HX}, Theorem 5.6] when $B$ has standard coefficients and 
$K_X+B$ is pseudo-effective. 

\begin{proof} 
This proof (as well as the proof of [\ref{HX}, Theorem 5.6]) is modeled on the proof of Shokurov's 
reduction theorem [\ref{Shokurov-pl}, Theorem 1.2].
Since $R$ is a flipping extremal ray, by definition, there is a nef and big $\Q$-divisor 
$L$ such that $R=L^\perp$. Moreover, $L$ is endowed with a map $X\to V$ to an algebraic space 
which contracts the curves generating $R$. 
Note that if $B'$ is another boundary such that $(K_X+B')\cdot R<0$, then the generalized flip 
exists for $(X,B)$ if and only if it exists for $(X,B')$. This follows from the fact that 
$K_X+B\equiv t(K_X+B')/V$ for some number $t>0$ where the numerical equivalence means that 
$K_X+B-t(K_X+B')$ is numerically trivial on any curve contracted by $X\to V$.

Let $\mathfrak{S}$ be the set of standard coefficients as defined in the introduction.
Define 
$$
\zeta(X,B)=\#\{S \mid \mbox{$S$ is a component of $B$ and its coefficient is not in~} \mathfrak{S}\}
$$ 
Assume that the generalized flip of $R$ does not exist. We will derive a contradiction.
We can assume that $\zeta(X,B)$ 
is minimal, that is, we may assume that generalized flips always exist for 
pairs with smaller $\zeta$. We can decrease the coefficients of $\rddown{B}$ slightly so that 
$(X,B)$ becomes klt and $\zeta(X,B)$ is unchanged. In addition, each 
component $S$ of $B$ whose coefficient is not in $\mathfrak{S}$ satisfies $S\cdot R<0$ 
otherwise we can discard $S$ and decrease $\zeta(X,B)$ which is not possible by the 
minimality assumption.

First assume that $\zeta(X,B)>0$.
 Choose a component $S$ of $B$ whose coefficient $b$ is not 
in $\mathfrak{S}$. There is a positive number $a$ such that $K_X+B\equiv aS/V$.
Let $g\colon W\to X$ be a log resolution, and let $B_W=B^\sim+E$ 
and $\Delta_W=B_W+(1-b)S^\sim$
where $E$ is the reduced exceptional divisor of $g$ and $B^\sim,S^\sim$ are 
birational transforms. Note that $\rddown{B_W}=E$ and $\rddown{\Delta_W}=S^\sim+E$.
Since $(X,B)$ is klt, 
$$
K_W+\Delta_W=K_W+B_W+(1-b)S^\sim=g^*(K_X+B)+G+(1-b)S^\sim
$$
where $G$ is effective and its support is equal to the support of $E$. Thus
$$
K_W+\Delta_W\equiv g^*(aS)+G+(1-b)S^\sim=(a+1-b)S^\sim+F/V
$$
where $F$ is effective and $\Supp F=\Supp E$. By construction, we have 
$$
\mbox{$\Supp (S^\sim+F)=\rddown{\Delta_W}~~$ and $~~\zeta(W,\Delta_W)<\zeta(X,B)$}
$$

Run an LMMP$/V$ on $K_W+\Delta_W$ with scaling of some ample divisor, as in \ref{ss-g-LMMP-scaling}. 
Recall that this is an LMMP$/\mathcal{C}$ 
on $K_W+\Delta_W$ where $\mathcal{C}=N^\perp$ and $N$ is the pullback of the nef and big $\Q$-divisor $L$.
In each step some component of $\rddown{\Delta_W}$ 
is negative on the corresponding extremal ray. So such extremal rays are pl-extremal rays, 
 they can be contracted by projective 
morphisms, and the $\Q$-factorial property is preserved  (see \ref{ss-pl-ext-rays}). 
Moreover, if we encounter a flipping contraction, then 
its generalized flip exists because 
$\zeta(W,\Delta_W)<\zeta(X,B)$ and because we chose $\zeta(X,B)$ to be minimal; 
the flip is a usual one since its extremal ray is contracted projectively. 
By special termination (\ref{p-st}), 
the LMMP terminates on some model $Y/V$.

Now run an LMMP$/V$ on $K_Y+B_Y$ with scaling of $(1-b)S_Y$  
where $B_Y$ is the pushdown of 
$B_W$ and $S_Y$ is the pushdown of $S^\sim$. Since we have the numerical equivalence  
$K_Y+B_Y\equiv a S_Y+F_Y/V$ and $\Supp F_Y=\rddown{B_Y}$,
in each step of the LMMP the corresponding extremal ray intersects 
some component of $\rddown{B_Y}$ negatively hence they are pl-extremal rays and 
they can be contracted by projective morphisms (\ref{ss-pl-ext-rays}). Moreover, 
if one of these rays gives a  
flipping contraction, then its generalized flip exists because $K_Y+B_Y-bS_Y$ 
is negative on that ray and $\zeta(Y,B_Y-bS_Y)<\zeta(X,B)$. 
Note that again such flips are usual flips.
The LMMP terminates on a model $X^+$ by special termination. 

Let $h\colon W'\to X$ and $e\colon W'\to X^+$ be a common resolution.
Now the negativity lemma (\ref{ss-negativity}) 
applied to the divisor $h^*(K_X+B)-{e^*}(K_{X^+}+B^+)$ over $X$ implies  
that 
$$
h^*(K_X+B)-{e^*}(K_{X^+}+B^+)\ge 0
$$ 
Thus  
every component $D$ of $E$ is contracted over $X^+$ because  
$$
0<a(D,X,B)\le a(D,X^+,B^+)
$$
Therefore $X\bir X^+$ is an isomorphism in codimension one. It is enough to show that  
$K_{X^+}+B^+$ is numerically positive$/V$. Let $H^+$ be an ample 
divisor on $X^+$ and $H$ its birational transform on $X$. There is a positive number 
$c$ such that $K_X+B\equiv cH/V$ hence $K_{X^+}+B^+\equiv cH^+/V$ which implies that 
$K_{X^+}+B^+$ is numerically positive$/V$. 
So we have constructed the generalized flip and this contradicts our assumptions above.

Now assume that $\zeta(X,B)=0$. If $K_X+B$ is pseudo-effective, then we can simply 
apply [\ref{HX}, Theorem 5.6] to get a contradiction. Unfortunately, $K_X+B$ may not be pseudo-effective (note that 
even if we originally start with a pseudo-effective log divisor we may end up with a 
non-pseudo-effective $K_X+B$ since we decreased some coefficients). However, this is not 
a problem because the proof of [\ref{HX}, Theorem 5.6] still works. Since there is a 
nef and big $\Q$-divisor $L$ with $L\cdot R=0$, there is a prime divisor $S$ with $S\cdot R<0$. 
There is a number $a>0$ such that $K_X+B\equiv aS/V$. Now take a 
log resolution $g\colon W\to X$ and define $B_W$ and $\Delta_W$ as above (if $S$ is not 
a component of $B$ simply let $b=0$). Run an LMMP$/V$ 
on $K_W+\Delta_W$. The extremal rays in the process are all pl-extremal rays hence 
they can be contracted by projective morphisms. Moreover, if we encounter a 
flipping contraction, then its flip exists by [\ref{HX}, Theorem 4.12] because  
all the coefficients of $\Delta_W$ are standard. The LMMP terminates on some model $Y$ by the 
special termination. Next, run the LMMP$/V$ on $K_Y+B_Y$ with scaling of $(1-b)S_Y$. 
Again, the extremal rays in the process are all pl-extremal rays hence 
they can be contracted by projective morphisms. Moreover, if we encounter a 
flipping contraction, then its flip exists by [\ref{HX}, Theorem 4.12] because 
all the coefficients of $B_Y$ are standard.
The LMMP terminates on some model $X^+$ by the 
special termination. The rest of the argument goes as before.\\
\end{proof}

\subsection{Proof of \ref{t-flip-1} in the projective case}
\begin{proof}(of Theorem \ref{t-flip-1} in the projective case)
Assume that $X$ is projective. Then by Theorem \ref{t-flip-2}, the generalized 
flip of the extremal ray of $X\to Z$ exists. But since $X\to Z$ is a projective contraction, 
the generalized flip is a usual flip. 

If $X$ is only quasi-projective, we postpone the proof to Section \ref{s-mmodels}. Until then we need 
flips only in the projective case.\\
\end{proof}

\section{Crepant models}\label{s-crepant-models}

\subsection{Divisorial extremal rays}
The next lemma is essentially {[\ref{HX}, Theorem 5.6(2)]}. 

\begin{lem}\label{l-div-ray}
Let $(X,B)$ be a projective $\Q$-factorial dlt pair of dimension $3$ over $k$ of char $p>5$. 
Let $R$ be a $K_X+B$-negative divisorial extremal ray. Then $R$ can be 
contracted by a projective morphism $X\to Z$ where $Z$ is $\Q$-factorial.
\end{lem}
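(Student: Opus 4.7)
The plan is to deduce the contraction from Keel's semi-ampleness theorem. By \ref{ss-ext-rays-II} fix a nef and big $\Q$-divisor $L$ with $L^{\perp} = R$; we already have a contraction $X \to V$ to an algebraic space, and it suffices to prove $L$ is semi-ample over $V$. By Theorem \ref{t-Keel-1} this reduces to showing $L|_{\mathbb{E}(L)}$ is semi-ample, where $\mathbb{E}(L)$ is two-dimensional since $R$ is divisorial and agrees set-theoretically with the union of the prime divisors on $X$ contracted to $V$. Once semi-ampleness is in hand, $X \to V$ is a projective morphism, and $\Q$-factoriality of $V$ follows from the Keel-type perturbation argument of \ref{ss-pl-ext-rays} applied to $L + \epsilon D + \delta S$ for a candidate prime divisor $D \subset X$ mapping to a prime divisor on $V$.

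I would prove semi-ampleness of $L|_{\mathbb{E}(L)}$ by induction on $\zeta(X, B) := \#\{\mbox{components of $B$ with non-standard coefficient}\}$, mirroring the proof of Theorem \ref{t-flip-2}. The base case $\zeta = 0$ is [\ref{HX}, Theorem 5.6(2)]. For the inductive step, after perturbing $\rddown{B}$ slightly we may assume $(X, B)$ is klt and every component $S$ of $B$ with non-standard coefficient $b$ satisfies $S \cdot R < 0$ (otherwise discard $S$ and decrease $\zeta$). Fix such an $S$; by the definition of $R$ we have $K_X + B \equiv a S / V$ for some $a > 0$. Take a log resolution $g \colon W \to X$ and set $\Delta_W := B^\sim + E_W + (1-b) S^\sim$ where $E_W$ is the reduced $g$-exceptional divisor. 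Then $\rddown{\Delta_W} = S^\sim + E_W$, $\zeta(W, \Delta_W) < \zeta(X, B)$, and
$$
K_W + \Delta_W \equiv (a + 1 - b) S^\sim + F_W / V
$$
with $\Supp F_W = \Supp E_W$.

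I would then run, over $V$, the LMMP on $K_W + \Delta_W$ with scaling of an ample divisor to reach a $\Q$-factorial model $Y/V$, and next the LMMP on $K_Y + B_Y$ with scaling of $(1-b) S_Y$ to reach a $\Q$-factorial model $X^+/V$. Each extremal ray is a pl-extremal ray (it intersects some component of the reduced boundary negatively), hence is contractible by a projective morphism with $\Q$-factoriality preserved by \ref{ss-pl-ext-rays}; any flipping contraction has a flip either via Theorem \ref{t-flip-2} (the generalized flip is a usual flip since the contraction is projective) or via the inductive hypothesis at smaller $\zeta$. Special termination (\ref{p-st}) ends both LMMPs.

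On $X^+$ the exceptional locus $\mathbb{E}(L^+)$ of the pullback of $L$ has dimension at most one, since every divisor contracted by $X \to V$ has been absorbed into the reduced boundary of $\Delta_W$ or of $B_Y$ along the way and is therefore contracted somewhere in the two LMMPs. Thus $L^+|_{\mathbb{E}(L^+)} \equiv 0$ is trivially semi-ample, so $L^+$ is semi-ample by Theorem \ref{t-Keel-1}, and $V$ is a projective scheme. Pulling back via a common resolution and applying the negativity lemma (\ref{ss-negativity}) then shows $L|_{\mathbb{E}(L)}$ is semi-ample as well, yielding the projective contraction $X \to Z := V$. I expect the main obstacle to be rigorously verifying that every prime divisor originally contracted by $X \to V$ indeed enters the reduced boundary somewhere in the process and is then contracted in the pl-steps; this rests on the specific choice of $\Delta_W$ (which places the $g$-exceptional divisors into $\rddown{\Delta_W}$) together with the divisorial nature of $R$.
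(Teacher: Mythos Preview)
Your overall strategy---run LMMPs over the algebraic space $V$ until the exceptional locus becomes small, then deduce that $V$ is projective---is the right idea, but the execution is more complicated than necessary and has a genuine gap at the end.

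The paper's proof is considerably simpler. It runs only \emph{one} LMMP$/V$, on $K_W+\Delta_W$, and uses no induction on $\zeta$: since Theorem~\ref{t-flip-2} is already established at this point, every pl-flip required in the LMMP exists outright, so there is no need to reduce to smaller $\zeta$ or to invoke [\ref{HX}, 5.6(2)] as a base case. The second LMMP (on $K_Y+B_Y$ with scaling of $(1-b)S_Y$) is likewise unnecessary here: it was needed in Theorem~\ref{t-flip-2} to strip the extra component $S^\sim$ from the boundary so that $X\dashrightarrow X^+$ would be an isomorphism in codimension one, but in the divisorial situation one \emph{wants} $S$ to be contracted, and this already happens in the first LMMP.

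The real gap is in your endgame. You claim that on $X^+$ the divisor ``$L^+$'' has $\dim\mathbb{E}(L^+)\le 1$ and is therefore semi-ample. But $L^+$ is never defined, and there is no reason for the birational transform of $L$ (or of $g^*L$) to be nef on $X^+$: the LMMP steps are $(K_W+\Delta_W)$-negative, not $L$-trivial. Moreover, knowing only that $X^+\to V$ is small does \emph{not} force $V$ to be projective---small contractions from projective varieties to non-projective algebraic spaces exist. And your final sentence, transferring semi-ampleness of $L^+$ back to $L|_{\mathbb{E}(L)}$ ``via negativity'', does not make sense: the negativity lemma compares divisors, it does not transport semi-ampleness across birational maps.

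What the paper does instead is prove that the model $Z$ produced by the single LMMP satisfies $Z\to V$ is an \emph{isomorphism}. Since $(a+1-b)S_Z+F_Z$ is effective, exceptional$/V$ and nef$/V$, a negativity-type argument with an auxiliary ample divisor $H_Z$ on $Z$ (producing an effective exceptional $P_Z$ with $H_Z+P_Z\equiv 0/V$ and $\Supp P_Z=\Supp F_Z$) forces $Z=V$. Then $V$ is a projective $\Q$-factorial variety, and $X\to V$ is automatically projective. This is the missing idea in your argument: rather than chasing semi-ampleness of an auxiliary divisor on the final model, show directly that the final model \emph{is} $V$.
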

\begin{proof}
We may assume that $(X,B)$ is klt. 
Since $R$ is a divisorial extremal ray, by definition, there is a nef and big $\Q$-divisor 
$L$ such that $R=L^\perp$ and $\dim \mathbb{E}(L)=2$. 
Moreover, $R$ can be contracted by a map $X\to V$ to an algebraic space. 
There is a prime divisor $S$ with $S\cdot R<0$. In particular, $\mathbb{E}(L)\subseteq S$ and 
$S$ is the only prime divisor contracted by $X\to V$. 
There is a number $a>0$ such that $K_X+B\equiv aS/V$. 
Let $g\colon W\to X$ be a log resolution and define  $\Delta_W$ as in the 
proof of Theorem \ref{t-flip-2}. Run an LMMP$/V$ on $K_W+\Delta_W$. As in \ref{t-flip-2}, 
the extremal rays in the process are pl-extremal rays hence they are contracted projectively 
and the LMMP terminates with a model $Z$.
We are done if we show that $Z\to V$ is an isomorphism (the $\Q$-factoriality claim follows from 
\ref{ss-pl-ext-rays}). Assume this is not the case.

Recall that 
$$
K_W+\Delta_W\equiv (a+1-b)S^\sim+F/V
$$ 
and now $(a+1-b)S^\sim+F$ is exceptional$/V$. In particular, $(a+1-b)S_Z+F_Z$ is 
effective, exceptional and nef$/V$. 

Let $H_Z$ be a general ample divisor on $Z$ 
and $H$ its birational transform on $X$. There is a number $t\ge 0$ such that 
$H+tS\equiv 0/V$. Therefore there is an effective and exceptional$/V$ divisor 
$P_Z$ such that $H_Z+P_Z\equiv 0/V$. Note that $\Supp P_Z$ contains all the exceptional 
divisors of $Z\to V$ hence $\Supp P_Z=\Supp F_Z$. Moreover, $P_Z\neq 0$ otherwise $H_Z\equiv 0/V$ 
hence $Z\to V$ is an isomorphism which is not the case by assumption. This also shows that 
$F_Z\neq 0$.

Let $s$ be the smallest number such that 
$$
Q_Z:=(a+1-b)S_Z+F_Z-sP_Z\le 0
$$
Then $Q_Z$ is numerically positive over $V$ and there is some prime exceptional$/V$ 
divisor $D$ which is not a component of $Q_Z$. This is not possible since 
$Q_Z$ cannot be numerically positive on the general curves of $D$ contracted$/V$.\\ 
\end{proof}

\subsection{Projectivization and dlt models}

\begin{lem}\label{l-reduced-Cartier}
Let $X$ be a normal projective variety over $k$ and $D\neq X$ a closed subset. Then 
there is a reduced effective Cartier divisor $H$ whose support contains $D$.  
\end{lem}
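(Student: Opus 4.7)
The plan is to take $H$ to be the vanishing divisor of a sufficiently general section of a suitable twisted ideal sheaf. Fix an ample line bundle $L$ on $X$, let $\mathcal{I}_D$ denote the ideal sheaf of $D$ with its reduced scheme structure, and set $V := H^0(X, \mathcal{I}_D \otimes L^m)$. By Serre vanishing, for $m \gg 0$ the sheaf $\mathcal{I}_D \otimes L^m$ is globally generated, and by enlarging $m$ we may further assume that $V$ separates points and tangent vectors on $U := X \setminus D$, so $V$ induces a locally closed immersion $\phi_V \colon U \hookrightarrow \mathbb{P}(V^*)$. Since every $s \in V$ vanishes on $D$, the support of $\mathrm{div}(s)$ automatically contains $D$; the remaining task is to choose $s$ so that $\mathrm{div}(s)$ is reduced.

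I would control the coefficients of $\mathrm{div}(s)$ along two kinds of prime components. First, for prime components not contained in $D$: the open set $U$ is normal, and $V|_U$ is a base-point-free linear system on $U$ defining the locally closed immersion $\phi_V$. By Bertini's theorem for a base-point-free linear system embedding a normal variety, valid in arbitrary characteristic, a general $s \in V$ has $\mathrm{div}(s) \cap U$ integral. Consequently the unique prime component of $\mathrm{div}(s)$ meeting $U$ occurs with coefficient one.

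Second, for prime components contained in $D$: only finitely many prime divisors $Z_1, \dots, Z_r$ are contained in $D$. At the generic point $\eta_i$ of $Z_i$ the local ring $\mathcal{O}_{X, \eta_i}$ is a DVR, and since $D$ is reduced and $Z_i$ is one of its divisorial components, $(\mathcal{I}_D)_{\eta_i}$ equals the maximal ideal. Global generation of $\mathcal{I}_D \otimes L^m$ at $\eta_i$ then ensures that the linear subspace
$$
V_i := V \cap H^0(X, \mathcal{I}_{Z_i}^2 \otimes L^m)
$$
of sections vanishing to order $\geq 2$ along $Z_i$ is a proper subspace of $V$. Hence for $s$ lying in the non-empty Zariski open subset of $V$ obtained by intersecting the Bertini locus with the complement of $\bigcup_i V_i$, the divisor $H := \mathrm{div}(s)$ has coefficient exactly one along each $Z_i$ and coefficient one on its unique prime component outside $D$. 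Therefore $H$ is a reduced effective Cartier divisor with $\Supp(H) \supseteq D$.

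The principal obstacle is the Bertini step, since reducedness of a generic hyperplane section can in principle fail in positive characteristic for singular varieties. However, applying Bertini to a base-point-free linear system that embeds the \emph{normal} variety $U$ circumvents this issue in any characteristic, so the argument goes through uniformly for $X$ over a field of char $p > 0$.
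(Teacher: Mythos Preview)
Your argument is correct, with two small caveats. First, the assertion that $V = H^0(X, \mathcal{I}_D \otimes L^m)$ induces a locally closed immersion of $U$ for $m \gg 0$ is true but does not follow from global generation alone; one quick justification is to replace $L$ by a very ample power, take $m$ so that $\mathcal{I}_D \otimes L^{m-1}$ is already globally generated, and observe that products $s_1 \cdot s_2$ with $s_1 \in H^0(\mathcal{I}_D \otimes L^{m-1})$ nonvanishing at a given point of $U$ and $s_2 \in H^0(L)$ lie in $V$ and inherit the point- and tangent-separation of the very ample $L$. Second, ``integral'' is too strong when $\dim X = 1$, but only reducedness matters for the lemma.

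The paper takes a different and terser route. It first enlarges $D$ to a reduced Weil divisor, then works on the smooth locus $U$ of $X$ rather than on $X \setminus D$: for $A$ sufficiently ample one picks a reduced effective $H' \sim A - D$ with no component in common with $D$, and sets $H := H' + D \sim A$, which is then Cartier and visibly reduced. Your approach avoids the preliminary enlargement of $D$ and instead verifies reducedness of $\mathrm{div}(s)$ directly, splitting into a Bertini argument away from $D$ and a valuation-theoretic check along the divisorial components $Z_i \subset D$. Your version is more explicit about the positive-characteristic Bertini input and treats non-divisorial components of $D$ uniformly; the paper's decomposition $H = H' + D$ makes the reducedness of $H$ more transparent once $H'$ is in hand, at the cost of hiding the same Bertini step inside the phrase ``since $(A-D)|_U$ is sufficiently ample''.
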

\begin{proof}
We may assume that each irreducible component of $D$ is a prime divisor hence we 
can think of $D$ as a reduced Weil divisor. Let $A$ be a sufficiently ample divisor. 
Let $U$ be the smooth locus of $X$. Since $(A-D)|_U$ is sufficiently ample, 
we can choose a reduced effective divisor $H'$ with no common components with $D$ 
such that $H'|_U\sim (A-D)|_U$. This 
extends to $X$ and gives $H'\sim A-D$. Now $H:=H'+D\sim A$ is Cartier and satisfies the 
requirements.\\ 
\end{proof}

The next few results are standard consequences of special termination (cf. 
[\ref{B-mmodel}, Lemma 3.3][\ref{HX}, Theorem 6.1]).

\begin{lem}\label{l-dlt-model-proj}
Let $(X,B)$ be an lc pair of dimension $3$ over $k$ of char $p>5$, and let $\overline{X}$ be a projectivization of  
$X$. Then there is a projective $\Q$-factorial dlt pair $(\overline{Y},B_{\overline{Y}})$ 
with a birational morphism $\overline{Y}\to \overline{X}$
satisfying the following: 

$\bullet$ $K_{\overline{Y}}+B_{\overline{Y}}$ is nef$/\overline{X}$,

$\bullet$ let $Y$ be the inverse image of $X$ and $B_Y=B_{\overline{Y}}|_Y$; 
 then $(Y,B_Y)$ is a $\Q$-factorial dlt model of $(X,B)$.
\end{lem}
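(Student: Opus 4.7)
The plan is to take a projective log resolution of $(\overline{X},\overline{B})$ and run a suitable MMP$/\overline{X}$ whose restriction over $X$ will be the standard $\Q$-factorial dlt model construction.

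After replacing $\overline{X}$ by its normalisation, let $\overline{B}$ be the Zariski closure of $B$ in $\overline{X}$. By resolution of singularities in dimension three with $p>5$ (see \ref{ss-resolution}), pick a log resolution $f\colon \overline{W}\to \overline{X}$ of $(\overline{X},\overline{B})$, let $E$ denote the reduced $f$-exceptional divisor, and put
$$
B_{\overline{W}}:=\overline{B}^\sim+E.
$$
Then $(\overline{W},B_{\overline{W}})$ is log smooth, hence a projective $\Q$-factorial dlt pair. Writing $W=f^{-1}(X)$ and $g_0=f|_W$, the standard discrepancy identity
$$
K_W+B_W=g_0^*(K_X+B)+F_W,\qquad F_W=\sum_E a(E,X,B)\,E,
$$
holds with $F_W\ge 0$, $g_0$-exceptional, and $\Supp F_W\subseteq \rddown{B_W}$.

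Now fix an ample $\Q$-divisor $A$ on $\overline{X}$. For $n\gg 0$ the divisor $K_{\overline{W}}+B_{\overline{W}}+nf^*A$ is effective, giving a weak Zariski decomposition $K_{\overline{W}}+B_{\overline{W}}\equiv 0+M/\overline{X}$ with $M\ge 0$. So by \ref{ss-ext-rays-II}(1), every $(K_{\overline{W}}+B_{\overline{W}})$-negative extremal ray $R$ of $\overline{NE}(\overline{W}/\overline{X})$ is generated by an extremal curve and supports a nef and big $\Q$-divisor $L$ with $L^\perp=R$. I then run the MMP$/\overline{X}$ on $K_{\overline{W}}+B_{\overline{W}}$ with scaling of an ample divisor on $\overline{W}$. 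Because $\overline{W}\to \overline{X}$ is birational, each such $R$ is either divisorial ($\dim \mathbb{E}(L)=2$) or flipping ($\dim \mathbb{E}(L)=1$); divisorial rays are contracted by projective morphisms with $\Q$-factorial image by Lemma \ref{l-div-ray}, while flipping rays admit generalized flips by Theorem \ref{t-flip-2} which again produce projective $\Q$-factorial dlt pairs. Special termination (Proposition \ref{p-st}) applies to the resulting mixed sequence of divisorial contractions and (generalized) flips, so the MMP ends at a projective $\Q$-factorial dlt $(\overline{Y},B_{\overline{Y}})$ with a birational morphism $\overline{Y}\to \overline{X}$ and $K_{\overline{Y}}+B_{\overline{Y}}$ nef$/\overline{X}$.

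Finally, set $Y=\overline{Y}\times_{\overline{X}}X$ and let $g\colon Y\to X$ be the induced birational morphism. Propagating the identity of the second paragraph through the MMP yields
$$
K_Y+B_Y=g^*(K_X+B)+F_Y
$$
with $F_Y\ge 0$, $g$-exceptional, and $\Supp F_Y\subseteq \rddown{B_Y}$. Since $K_Y+B_Y$ is nef$/X$ and $g^*(K_X+B)\equiv 0/X$, the effective $g$-exceptional divisor $F_Y$ is itself nef$/X$, so the negativity lemma (\ref{ss-negativity}) applied to $-F_Y$ forces $F_Y=0$. Hence $K_Y+B_Y=g^*(K_X+B)$, and every $g$-exceptional prime divisor appears in $B_Y$ with coefficient one by construction, so $(Y,B_Y)$ is a $\Q$-factorial dlt model of $(X,B)$. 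The delicate point is that extremal rays lying over points of $\overline{X}\setminus X$ need not be pl-rays, so the projective contractions of \ref{ss-pl-ext-rays} are not directly available there; this is precisely why Lemma \ref{l-div-ray} and the generalized flips of Theorem \ref{t-flip-2} are invoked, as both produce projective $\Q$-factorial dlt outputs regardless of whether the contracted ray maps to a scheme or only to an algebraic space.
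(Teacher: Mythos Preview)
Your argument has a genuine gap in the termination step. Special termination (Proposition \ref{p-st}) only asserts that after finitely many steps the LMMP is an isomorphism \emph{near the lc centres} of the pair; it does not say the LMMP terminates outright. Termination follows from special termination only when every flipping locus is forced to lie inside $\rddown{B_{\overline{W}}}$, i.e.\ when every flip is a pl-flip. In your setup this holds over $X$ (because $K_W+B_W\equiv F_W/X$ with $F_W\ge 0$ supported in $\rddown{B_W}$), but over $\overline{X}\setminus X$ you have no such structure: a flipping curve mapping to a point of $\overline{X}\setminus X$ need not meet $\rddown{B_{\overline{W}}}$ negatively, so you could face an infinite sequence of non-pl (generalized) flips there, and Proposition \ref{p-st} says nothing about these.

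This is exactly why the paper introduces, via Lemma \ref{l-reduced-Cartier}, a reduced Cartier divisor $H$ on $\overline{X}$ containing $\overline{X}\setminus X$, and first runs the LMMP on $K_{\overline{W}}+\Delta_{\overline{W}}$ with $\Delta_{\overline{W}}=B_{\overline{W}}+H^\sim$. Then any flipping curve over a point of $\Supp H$ lies in the pullback of $H$, hence in a component of $\rddown{\Delta_{\overline{W}}}$; combined with the $F_W$ argument over $X\setminus H$, \emph{every} flip in that LMMP is a pl-flip, and special termination gives honest termination. A second LMMP on $K_{\overline{Y}}+B_{\overline{Y}}$ with scaling of $H_{\overline{Y}}$ then removes the auxiliary $H$; since $H$ is Cartier on $\overline{X}$ its pullback is numerically trivial over $\overline{X}$, so again every ray is pl and special termination applies. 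Your final negativity-lemma paragraph is fine once termination is established, but the missing ingredient is precisely this device for forcing pl-structure over the boundary. As a secondary remark, invoking generalized flips here also requires checking that the resulting $X^+$ still admits a morphism to $\overline{X}$ and that Proposition \ref{p-st} applies to such a sequence; the paper's route via $H$ avoids these issues entirely by making every flipping contraction projective.
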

\begin{proof}
We may assume that $\overline{X}$ is normal.
By Lemma \ref{l-reduced-Cartier}, there is a reduced effective Cartier divisor $H$ containing the complement of $X$ in 
$\overline{X}$. We may assume that $H$ has no common components with $B$. 
Let $f\colon \overline{W}\to \overline{X}$ be a log resolution. 
Now let $B_{\overline{W}}$ be the sum of the reduced exceptional divisor of $f$ and the birational 
transform of $B$, and let $\Delta_{\overline{W}}$ be the sum of $B_{\overline{W}}$ and the birational 
transform of $H$.

Run the LMMP$/\overline{X}$ on $K_{\overline{W}}+\Delta_{\overline{W}}$ inductively as follows. 
Assume that we have arrived at a model $\overline{Y}$. Let $R$ be a 
$K_{\overline{Y}}+\Delta_{\overline{Y}}$-negative extremal ray$/\overline{X}$. 
Let $\overline{Y}\to \overline{Z}$ be the contraction of $R$ 
to an algebraic space, and let $L$ be a nef and big $\Q$-divisor with $L^\perp=R$. Any curve contracted 
by $\overline{Y}\to \overline{Z}$  is also contracted over $\overline{X}$.  
If $\dim \mathbb{E}(L)=2$, then $R$ is a 
divisorial extremal ray hence  $\overline{Y}\to \overline{Z}$ is a projective contraction 
by Lemma \ref{l-div-ray}. In this case, we continue the program with $\overline{Z}$. 
Now assume that $\dim \mathbb{E}(L)=1$.
 Let $C$ be a connected component of 
$\mathbb{E}(L)$ and $P$ its image in $\overline{X}$ which is just a point. 
If $P\in \Supp H$, then $C$ is contained in some  
component of the pullback of $H$ hence it is contained in some component of 
$\rddown{\Delta_{\overline{Y}}}$. In this case, $\overline{Y}\to \overline{Z}$ 
is again a projective contraction by \ref{ss-good-exc-locus}. Now assume that $P$ does not belong to the 
support of $H$. Since $(X,B)$ is lc, over $X\setminus H$ the divisor
$$
K_{\overline{W}}+\Delta_{\overline{W}}-f^*(K_X+B)
$$
is effective and exceptional$/\overline{X}$ hence some component of $\Delta_{\overline{Y}}$ intersects 
$R$ negatively which implies again that the contraction $\overline{Y}\to \overline{Z}$ is projective. 
Therefore in any case $R$ can be contracted by a projective morphism and we can continue the LMMP as usual. 
The required flips exist by the results of Section \ref{s-flips}.
By special termination (\ref{p-st}), the LMMP terminates say on ${{\overline{Y}}}$. 

Next, we run the LMMP$/\overline{X}$ on $K_{\overline{Y}}+B_{\overline{Y}}$ with scaling of 
$\Delta_{\overline{Y}}-B_{\overline{Y}}$ as in \ref{ss-g-LMMP-scaling}. Note that $\Delta_{\overline{Y}}-B_{\overline{Y}}$ is 
nothing but the 
birational transform of $H$. Since the pullback of $H$ is numerically trivial over $\overline{X}$, 
each extremal ray in the process intersects some exceptional divisor negatively hence 
such extremal rays can be contracted by projective morphisms. Moreover, the required 
flips exist and by special termination the LMMP 
terminates on a model which we may again denote by ${\overline{Y}}$. Now let $Y$ 
be the inverse image of $X$ under $g\colon \overline{Y}\to \overline{X}$ and let $B_Y$ 
be the restriction of $B_{\overline{Y}}$ to $Y$. Then $(Y,B_Y)$ is a $\Q$-factorial 
dlt model of $(X,B)$ because $K_Y+B_Y-g^*(K_X+B)$ is effective and exceptional hence zero 
as it is nef$/X$.\\
\end{proof}

\begin{proof}(of Theorem \ref{cor-dlt-model})
This is already proved in Lemma \ref{l-dlt-model-proj}.\\
\end{proof}

\subsection{Extraction of divisors and terminal models}

\begin{lem}\label{l-extraction}
Let $(X,B)$ be an lc pair of dimension $3$ over $k$ of char $p>5$ and let $\{D_i\}_{i\in I}$ be a finite set 
of exceptional$/X$ prime divisors (on birational
 models of $X$) such that $a(D_i,X,B)\le 1$. Then there is a $\Q$-factorial dlt pair $(Y,B_Y)$ 
 with a projective birational morphism $Y\to X$ such that\\
 $\bullet$ $K_Y+B_Y$ is the crepant pullback of $K_X+B$,\\
  $\bullet$  every exceptional/$X$ prime divisor $E$ of $Y$ is one of the $D_i$ or $a(E,X,B)=0$,\\
  $\bullet$  the set of exceptional/$X$ prime divisors of $Y$ includes $\{D_i\}_{i\in I}$.
\end{lem}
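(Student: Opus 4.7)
The plan is to take a log resolution of $X$ that realises every $D_i$ as a prime divisor, equip it with a carefully chosen dlt boundary, and run an LMMP over $X$ that contracts exactly the unwanted exceptional divisors while preserving the $D_i$. This follows the strategy of Lemma \ref{l-dlt-model-proj} (and the classical characteristic $0$ analogue [\ref{B-mmodel}, Lemma 3.3]). After passing to a projectivization as in Lemma \ref{l-dlt-model-proj} (with the aid of Lemma \ref{l-reduced-Cartier}), I may assume $X$ is projective.

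Choose a log resolution $f\colon W\to X$ in which every $D_i$ appears as a prime divisor; this is possible by \ref{ss-resolution} since $\{D_i\}_{i\in I}$ is finite. Partition the exceptional prime divisors of $f$ into $\mathcal{D}_1:=\{D_i\}_{i\in I}\cup\{E:a(E,X,B)=0\}$ and $\mathcal{D}_2:=$ the remaining exceptional divisors, so every $E\in\mathcal{D}_2$ satisfies $E\notin\{D_i\}$ and $a(E,X,B)>0$. Define
$$\Gamma_W := f^{-1}_*B + \sum_{E\in\mathcal{D}_2} E + \sum_{E\in\mathcal{D}_1}\bigl(1-a(E,X,B)\bigr)E.$$
Since $(X,B)$ is lc, all coefficients of $\Gamma_W$ lie in $[0,1]$ and $(W,\Gamma_W)$ is log smooth, hence $\Q$-factorial dlt. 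A direct computation gives
$$K_W+\Gamma_W = f^*(K_X+B)+F,\qquad F:=\sum_{E\in\mathcal{D}_2} a(E,X,B)\,E,$$
so $F\ge 0$ is effective and exceptional$/X$ with $\Supp F=\mathcal{D}_2\subseteq\rddown{\Gamma_W}$.

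Now run the LMMP$/X$ on $K_W+\Gamma_W$ via the weak Zariski decomposition $K_W+\Gamma_W\equiv 0+F/X$ as in \ref{ss-LMMP-WZD}. By \ref{ss-ext-ray-WZD} each extremal ray $R/X$ in the process is $F$-negative, so some component of $\mathcal{D}_2\subseteq\rddown{\Gamma_W}$ is $R$-negative, making $R$ a pl-extremal ray; by \ref{ss-pl-ext-rays} and Lemma \ref{l-div-ray} it can be contracted projectively with $\Q$-factoriality preserved, and the required flips exist by Theorem \ref{t-flip-2}. Finiteness of divisorial steps (each drops the relative Picard number over $X$) combined with special termination (Proposition \ref{p-st}) yields termination with a $\Q$-factorial dlt pair $(Y,\Gamma_Y)$ and a projective birational morphism $f_Y\colon Y\to X$ such that $K_Y+\Gamma_Y$ is nef$/X$ and $K_Y+\Gamma_Y = f_Y^*(K_X+B)+F_Y$ with $F_Y\ge 0$ effective exceptional$/X$. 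Since $F_Y$ is also nef$/X$, the negativity lemma (\ref{ss-negativity}) applied to $-F_Y$ forces $F_Y=0$, so $(Y,\Gamma_Y)$ is crepant to $(X,B)$ and every component of $\mathcal{D}_2$ has been contracted.

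Set $B_Y:=\Gamma_Y$. By construction $a(D_i,W,\Gamma_W)=a(D_i,X,B)$, and log discrepancies strictly increase at the generic point of any divisor contracted by a $(K+\Gamma)$-LMMP; so if some $D_i$ were contracted during the LMMP then $a(D_i,Y,\Gamma_Y) > a(D_i,X,B)$, contradicting the crepancy $K_Y+\Gamma_Y=f_Y^*(K_X+B)$ (which forces equality). Hence every $D_i$ persists on $Y$, and the exceptional divisors of $f_Y$ form a subset of $\mathcal{D}_1$ containing all $D_i$, as required; restricting back from the projectivization gives the lemma. The main obstacle is this coordinated behaviour of the LMMP: contracting all of $\mathcal{D}_2$ (via the negativity lemma applied to $F_Y$) while keeping every $D_i$ (via log-discrepancy monotonicity together with crepancy), with special termination the essential ingredient guaranteeing the LMMP actually terminates.
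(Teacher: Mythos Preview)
Your proof is correct and follows essentially the same approach as the paper. The paper's version is terser: it first replaces $(X,B)$ by its projective $\Q$-factorial dlt model via Lemma \ref{l-dlt-model-proj} (rather than your direct projectivization), defines the same boundary $B_W$ on a log resolution (your $\Gamma_W$), observes that the LMMP$/X$ on $K_W+B_W$ is an LMMP on the effective exceptional divisor $F$ supported in $\rddown{B_W}$, and concludes by special termination---leaving the negativity-lemma step and the persistence of the $D_i$ implicit, whereas you spell them out.
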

\begin{proof}
By Lemma \ref{l-dlt-model-proj}, we can assume that $(X,B)$ is projective 
$\Q$-factorial dlt. Let $f\colon W\to X$ be a log resolution and let $\{E_j\}_{j\in J}$
be the set of prime exceptional divisors of $f$. We can assume that
for some $J'\subseteq J$, $\{E_j\}_{j\in J'}=\{D_i\}_{i\in I}$.
Now define
$$
K_W+B_W:=f^*(K_{X}+B)+\sum_{j\notin J'} a(E_j,X,B)E_j
$$
which ensures that if $j\notin J'$, then $E_j$ is a component of $\rddown{B_W}$.
Run an LMMP$/X$ on $K_W+B_W$ which would be an LMMP on $\sum_{j\notin J'} a(E_j,X,B)E_j$. 
So each extremal ray in the process intersects some component of $\rddown{B_W}$ negatively 
hence such rays can be contracted by projective morphisms (\ref{ss-pl-ext-rays}), 
the required flips exists (Section \ref{s-flips}), 
and the LMMP terminates by special termination (\ref{p-st}), say on a model $Y$. 
Now $(Y,B_Y)$ satisfies all the requirements.\\  
\end{proof}

\begin{proof}(of Corollary \ref{cor-terminal-model})
Apply Lemma \ref{l-extraction} by taking $\{D_i\}_{i\in I}$ to be the set of all 
prime divisors with log discrepancy $a(D_i,X,B)\le 1$.\\
\end{proof}

\section{Existence of log minimal models}\label{s-mmodels}

\subsection{Weak Zariski decompositions}\label{ss-WZD}
Let $D$ be an $\R$-Cartier divisor on a normal variety $X$ and $X\to Z$ a projective contraction 
over $k$. 
A \emph{weak Zariski decomposition$/Z$} for $D$
consists of a projective birational morphism $f\colon W\to X$ from a normal variety, and a numerical equivalence 
$f^*D\equiv P+M/Z$ such that 
\begin{enumerate}
\item $P$ and $M$ are $\R$-Cartier divisors, 
\item $P$ is nef$/Z$, and $M\ge 0$.
\end{enumerate}
We then define ${\theta}(X,B,M)$ to be the number of those components of $f_*M$ which are not components of 
$\rddown{B}$.

\subsection{From weak Zariski decompositions to minimal models}
We use the methods of [\ref{B-WZD}], which is somewhat similar to [\ref{BCHM}, \S 5], 
to prove the following result.

\begin{prop}\label{p-WZD}
Let $(X,B)$ be a projective lc pair of dimension $3$ over $k$ of char $p>5$, 
and $X\to Z$ a projective contraction. 
Assume that $K_X+B$ has a weak Zariski decomposition$/Z$. Then $(X,B)$ has a 
log minimal model over $Z$.
\end{prop}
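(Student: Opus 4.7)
The plan is to follow the strategy of [\ref{B-WZD}] and induct on the invariant $\theta=\theta(X,B,M)$ from \ref{ss-WZD}. As a preliminary reduction, apply Lemma \ref{l-dlt-model-proj} to replace $(X,B)$ by a $\Q$-factorial dlt model; since a log minimal model of the dlt blowup over $Z$ yields one for $(X,B)/Z$ via the negativity lemma \ref{ss-negativity}, this reduction is harmless. By passing to a common log resolution, I may also assume the model $W$ in the decomposition dominates this dlt model and still carries $f^*(K_X+B)\equiv P+M/Z$.

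For the base case $\theta=0$, we have $\Supp f_*M\subseteq \rddown{B}$. Set $B_W=f^{-1}_*B+E$ with $E$ the reduced exceptional divisor of $f$, and write $K_W+B_W=f^*(K_X+B)+G$ with $G\ge 0$ supported on $E$; this gives $K_W+B_W\equiv P+(M+G)/Z$ with $\Supp(M+G)\subseteq \rddown{B_W}$. Run the LMMP$/Z$ on $K_W+B_W$ using $P+(M+G)$ as in \ref{ss-LMMP-WZD}. At each step \ref{ss-ext-ray-WZD} supplies a $K_W+B_W$-negative extremal ray $R/Z$ with $(P+\mu(M+G))\cdot R=0$ for some $\mu\in[0,1)$; since $P$ is nef and $\Supp(M+G)\subseteq \rddown{B_W}$, some component of $\rddown{B_W}$ is negative on $R$, so $R$ is a pl-extremal ray. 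By \ref{ss-pl-ext-rays} it contracts projectively and its flip, when needed, exists by Theorem \ref{t-flip-2}. Special termination (Proposition \ref{p-st}) then terminates the LMMP at a log minimal model of $(W,B_W)/Z$, which is also one of $(X,B)/Z$.

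For the inductive step $\theta>0$, pick a component $D$ of $f_*M$ not in $\rddown{B}$; using Lemma \ref{l-extraction} if necessary, I may assume $D$ is a prime divisor on $X$ with coefficient $b<1$ in $B$ and that the perturbation $(X,B+(1-b)D)$ is lc. Setting $B'=B+(1-b)D$, the same $f$ yields $f^*(K_X+B')\equiv P+M'/Z$ with $M'=M+(1-b)f^*D$, and a short check shows $\theta(X,B',M')=\theta(X,B,M)-1$ because $D$ now lies in $\rddown{B'}$. By induction, $(X,B')$ admits a log minimal model $(Y,B'_Y)/Z$. Letting $B_Y=B'_Y-(1-b)D_Y$, the identity $K_Y+B_Y+(1-b)D_Y=K_Y+B'_Y$ nef$/Z$ allows me to run the LMMP$/Z$ on $K_Y+B_Y$ with scaling of $(1-b)D_Y$ as in \ref{ss-g-LMMP-scaling}; a discrepancy comparison via \ref{ss-negativity} then identifies the output as a log minimal model of the original $(X,B)/Z$.

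The main obstacle I expect is carrying out this final scaling LMMP inside the char $p>5$ technology: each extremal ray that appears must be contractible by a projective morphism and flipping rays must admit their flips. The key point is that $(Y,B_Y)$ inherits a weak Zariski decomposition from $(X,B)$ via pullback along a common resolution, so \ref{ss-ext-ray-WZD} combined with the scaling relation forces some component of $\rddown{B_Y}$ to be negative on every extremal ray that arises, making each one a pl-extremal ray. Then Theorem \ref{t-flip-2} and \ref{ss-pl-ext-rays} supply the needed projective contractions and flips, and Proposition \ref{p-st} supplies termination; a final use of the negativity lemma checks the log discrepancy inequalities required for an honest log minimal model of $(X,B)/Z$.
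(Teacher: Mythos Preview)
Your base case $\theta=0$ is fine and matches the paper's argument. The gap is in the inductive step, specifically in the final scaling LMMP on $K_Y+B_Y$ with scaling of $(1-b)D_Y$.

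You claim that every extremal ray $R$ arising in this LMMP is pl-extremal, justified by saying that $(Y,B_Y)$ inherits a weak Zariski decomposition and then invoking \ref{ss-ext-ray-WZD}. But \ref{ss-ext-ray-WZD} requires the hypothesis $\Supp M\subseteq\rddown{B}$, and this is exactly what fails here: the component $D_Y$ lies in $\Supp M_Y$ but has coefficient $b<1$ in $B_Y$, so $D_Y\not\subseteq\rddown{B_Y}$. Moreover, the rays you meet satisfy $D_Y\cdot R>0$ (since $K_Y+B_Y+\lambda(1-b)D_Y$ is nef and $K_Y+B_Y$ is negative on $R$), and there is no mechanism forcing any component of $\rddown{B_Y}$ to be negative on $R$; indeed $\rddown{B_Y}$ can be empty (take $(X,B)$ klt with $X\bir Y$ an isomorphism in codimension one). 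Once the rays are not pl-extremal, special termination \ref{p-st} no longer gives termination of the whole LMMP, only eventual triviality near the lc locus. So the argument stalls.

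The paper avoids this entirely. After producing the log minimal model $(Y,(B+C)_Y)$ of $(X,B+C)$ with smaller $\theta$, it does \emph{not} run any further LMMP. Instead it works by contradiction on a counterexample with minimal $\theta$, and uses pure negativity-lemma manipulations to show that every component of $C$ is exceptional over $Y$, hence $(B+C)_Y=B_Y$ and $K_Y+B_Y$ is already nef$/Z$; a further negativity argument then either confirms $(Y,B_Y)$ is a log minimal model of $(X,B)$ or produces a new weak Zariski decomposition of $K_X+B$ with strictly smaller $\theta$, contradicting minimality. The point is that the induction is on $\theta$ \emph{for the fixed pair} $(X,B)$, not on $\theta$ across different pairs, and this is what lets the argument close without a second LMMP.
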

\begin{proof}
Assume that $\mathfrak{W}$ is the set of pairs $(X,B)$ and projective contractions $X\to Z$ 
such that\\ 
\begin{description}
\item[L] $(X,B)$ is projective, lc of dimension $3$ over $k$,  
\item[Z] $K_X+B$ has a weak Zariski decomposition$/Z$, and 
\item[N] $(X,B)$ has no log minimal model over $Z$.\\
\end{description}

Clearly, it is enough to show that $\mathfrak{W}$ is empty. Assume otherwise and let $(X,B)$ and 
$X\to Z$ be in $\mathfrak{W}$.  
Let $f\colon W\to X$, $P$ and $M$ be the data given by 
a weak Zariski decomposition$/Z$ for $K_X+B$ as in \ref{ss-WZD}. Assume in addition that 
${\theta}(X,B,M)$ is minimal. Perhaps after replacing $f$ we can assume that $f$ gives a 
log resolution of $(X, \Supp (B+f_*M))$. Let $B_W=B^\sim+E$ where $B^\sim$ is the birational 
transform of $B$ and $E$ is the reduced exceptional divisor of $f$. Then 
$$
K_W+B_W=f^*(K_X+B)+F\equiv P+M+F/Z
$$
is a weak Zariski decomposition where $F\ge 0$ is exceptional$/X$. Moreover, 
$$
{\theta}(W,B_W,M+F)={\theta}(X,B,M)
$$ 
and any log minimal model of 
$(W,B_W)$ is also a log minimal model of $(X,B)$ [\ref{B-WZD}, Remark 2.4].
So by replacing $(X,B)$ with $(W,B_W)$ and $M$ with $M+F$ we may assume 
that $W=X$, $(X,\Supp (B+M))$ is log smooth, and that $K_X+B\equiv P+M/Z$.

First assume that ${\theta}(X,B,M)=0$, that is, $\Supp M\subseteq \rddown{B}$. 
Run the LMMP$/Z$ on $K_X+B$ using $P+M$ 
as in \ref{ss-LMMP-WZD}. Obviously, $M$ negatively intersects each 
extremal ray in the process, and since $\Supp M\subseteq \rddown{B}$, 
the rays are pl-extremal rays. Therefore those rays can be contracted by projective morphisms 
(\ref{ss-pl-ext-rays}), the required flips exist (Section \ref{s-flips}), and the LMMP terminates by special 
termination (\ref{ss-special-termination}). Thus we get a log minimal model of $(X,B)$ over $Z$ which 
contradicts the assumption that $(X,B)$ and $X\to Z$ belong to $\mathfrak{W}$. 
For the rest of the proof we do not use LMMP.

From now on we assume that ${\theta}(X,B,M)>0$. Define
$$
\alpha:=\min\{t>0~|~~\rddown{(B+tM)^{\le 1}}\neq \rddown{B}~\}
$$
where for a divisor $D=\sum d_iD_i$ we define $D^{\le 1}=\sum d_i'D_i$ with $d_i'=\min\{d_i,1\}$.
 In particular, $(B+\alpha M)^{\le 1}=B+C$ for some $C\ge 0$ 
supported in $\Supp M$, and $\alpha M=C+A$ 
where $A\ge 0$ is supported in $\rddown{B}$ and $C$ has no common components with $\rddown{B}$. Note that  
${\theta}(X,B,M)$ is equal to the number of components of $C$.
The pair $(X,B+C)$ is  lc and the expression 
$$
K_X+B+C\equiv P+M+C/Z
$$ 
is a weak Zariski decomposition$/Z$. By construction
$$
{\theta}(X,B+C,M+C)<{\theta}(X,B,M)
$$
so $(X,B+C)$ has a log minimal model over $Z$ by minimality of ${\theta}(X,B,M)$ 
and the definition of $\mathfrak W$. Let $(Y,(B+C)_Y)$ be the minimal model. 

Let $g\colon V\to X$ and $h\colon V\to Y$ be a common resolution. 
By definition, $K_Y+(B+C)_Y$ is nef/$Z$. 
In particular, the expression 
$$
g^*(K_X+B+C)= P'+M'
$$ 
is a weak Zariski decomposition$/Z$ of $K_X+B+C$ where $P'=h^*(K_Y+(B+C)_Y)$ 
and $M'\ge 0$ is exceptional$/Y$ (cf. [\ref{B-WZD}, Remark 2.4 (2)]). 
Moreover, 
$$
g^*(K_X+B+C)=P'+M'\equiv g^*P+g^*(M+C)/Z
$$
Since $M'$ is exceptional$/Y$, 
$$
h_*(g^*(M+C)-M')\ge 0
$$ 
On the other hand, 
$$
g^*(M+C)-M'\equiv P'-g^*P/Z
$$ 
is anti-nef$/Y$ hence by the negativity lemma, 
$g^*(M+C)-M'\ge 0$.
Therefore $\Supp M'\subseteq \Supp g^*(M+C)=\Supp g^*M$. 

Now, 

\begin{equation*}
\begin{split}
(1+\alpha)g^*(K_X+B) & \equiv g^*(K_X+B)+\alpha g^*P+ \alpha g^*M\\
& \equiv g^*(K_X+B)+\alpha g^*P+g^* C+g^*A\\
& \equiv P'+\alpha g^*P+M'+g^*A/Z
\end{split}
\end{equation*}
hence we get a weak Zariski decomposition$/Z$ as 
$$
g^*(K_X+B)\equiv P''+M''/Z
$$ 
where 
$$
P''=\frac{1}{1+\alpha}(P'+ \alpha g^*P)  \mbox{\hspace{0.5cm}  and \hspace{0.5cm}}  M''=\frac{1}{1+\alpha}(M'+g^*A)
$$
and $\Supp M''\subseteq \Supp g^* M$ hence $\Supp g_*M''\subseteq \Supp M$. Since ${\theta}(X,B,M)$ is minimal,  
$$
{\theta}(X,B,M)={\theta}(X,B,M'')
$$ 
 So every component of $C$ is also a component 
of $g_*M''$ which in turn implies that every component of $C$ is also a 
component of $g_*M'$. But $M'$ is exceptional$/Y$ 
hence so is $C$ which means that $(B+C)_Y=B^\sim+C^\sim+E=B^\sim+E=B_Y$ where $\sim$ stands for birational transform 
and $E$ is the reduced exceptional divisor of $Y\bir X$. Thus we have $P'=h^*(K_Y+B_Y)$. Although $K_Y+B_Y$ 
is nef$/Z$, $(Y,B_Y)$ is not necessarily a 
log minimal model of $(X,B)$ over $Z$ because condition (4) of definition 
of log minimal models may not be satisfied (see \ref{ss-mmodels}). 

Let $G$ be the largest $\R$-divisor such that $G\le g^*C$ and $G\le {M}'$. By letting $\tilde{C}=g^*C-G$ and 
$\tilde{M}'= M'-G$ we get the expression 
$$
g^*(K_X+B)+\tilde{C}= P'+\tilde{M}'
$$ 
where $\tilde{C}$ and $\tilde{M}'$ are effective with no common components. 

 Assume that $\tilde{C}$ is exceptional$/X$.  Then $g^*(K_X+B)-P'=\tilde{M}'-\tilde{C}$ is antinef$/X$ 
so by the negativity lemma $\tilde{M}'-\tilde{C}\ge 0$ which implies that $\tilde{C}=0$ since 
$\tilde{C}$ and $\tilde{M}'$ 
have no common components. Thus
$$
g^*(K_X+B)-h^*(K_Y+B_Y)=\sum_D a(D,Y,B_Y)D-a(D,X,B)D=\tilde{M}'
$$
where $D$ runs over the prime divisors on $V$.
If $\Supp g_*\tilde{M}'=\Supp g_*M'$, then $\Supp \tilde{M}'$ contains the birational transform of 
all the prime exceptional$/Y$ divisors on $X$ 
hence $(Y,B_Y)$ is a log minimal model of $(X,B)$ over $Z$, a contradiction. Thus 
$$
\Supp (g_*M'-g_*G)=\Supp g_*\tilde{M}'\subsetneq \Supp g_* M'\subseteq \Supp M
$$ 
so some component of $C$ is not a component of $g_*\tilde{M}'$ because $\Supp g_*G\subseteq \Supp C$. Therefore 
$$
{\theta}(X,B,M)>{\theta}(X,B,\tilde{M}')
$$ 
which gives a contradiction again by minimality of ${\theta}(X,B,M)$ and the assumption
 that $(X,B)$ has no log minimal model over $Z$.

 So we may assume that $\tilde{C}$ is not exceptional$/X$. Let 
$\beta>0$ be the smallest number such that $\tilde{A}:=\beta g^* M-\tilde{C}$ satisfies $g_*\tilde{A}\ge 0$. Then 
there is a component of $g_*\tilde{C}$ which is not a component of $g_*\tilde{A}$. 
Now 

\begin{equation*}
\begin{split}
(1+\beta)g^*(K_X+B) & \equiv g^*(K_X+B)+\beta g^*M+\beta g^*P\\
&  \equiv g^*(K_X+B)+\tilde{C}+\tilde{A}+\beta g^*P\\
& \equiv P'+\beta g^*P+ \tilde{M}'+\tilde{A}/Z
\end{split}
\end{equation*}
where $\tilde{M}'+\tilde{A}\ge 0$ by the negativity lemma. 
Thus we get a weak Zariski decomposition$/Z$ as $g^*(K_X+B)\equiv P'''+M'''/Z$
 where 
$$
P'''=\frac{1}{1+\beta}(P'+\beta g^*P)  \mbox{\hspace{0.5cm}  and \hspace{0.5cm}}  M'''=\frac{1}{1+\beta}(\tilde{M}'+\tilde{A})
$$
and $\Supp g_*M'''\subseteq \Supp M$.
Moreover, by construction, there is a component $D$ of $g_*\tilde{C}$ which is not a component of $g_*\tilde{A}$. 
Since $g_*\tilde{C}\le C$, $D$ is a component of $C$ hence of $M$, and since $\tilde{C}$ and $\tilde{M}'$ 
have no common components, $D$ is not a component of $g_*\tilde{M}'$. Therefore $D$ is not a component of 
$g_*M'''=\frac{1}{1+\beta}(g_*\tilde{M}'+g_*\tilde{A})$ which implies that   
$$
{\theta}(X,B,M)>{\theta}(X,B,M''')
$$ 
giving a contradiction again.\\ 
\end{proof}

\subsection{Proofs of \ref{t-mmodel} and \ref{t-flip-1}}

\begin{proof}(of Theorem \ref{t-mmodel})
By applying Lemma \ref{l-dlt-model-proj}, we can reduce the problem to the 
case when $X,Z$ are projective. We can find a log resolution $f\colon W\to X$ 
and a $\Q$-boundary $B_W$ such that  
$$
K_W+B_W=f^*(K_X+B)+E
$$ 
where $E\ge 0$ and its support is equal to the union of the 
exceptional divisors of $f$, and $(W,B_W)$ has terminal singularities. It is enough to 
construct a log minimal model for $(W,B_W)$ over $Z$. So by replacing $(X,B)$ 
with $(W,B_W)$ we can assume $(X,B)$ has terminal singularities and that $X$ is $\Q$-factorial.
 
 Let 
$$
\mathcal{E}=\{B' \mid K_X+B' ~~\mbox{is pseudo-effective$/Z$ and $0\le B'\le B$}\}
$$
which is a compact subset of the $\R$-vector space $V$ generated by the components of $B$. 
Let $B'$ be an element in $\mathcal{E}$ which has minimal 
distance from $0$ with respect to the standard metric on $V$. So either $B'=0$, or 
$K_X+B''$ is not pseudo-effective$/Z$ for any $0\le B''\lneq B'$.
    
Run the generalized LMMP$/Z$ on $K_X+B'$ as follows [\ref{HX}, proof of Theorem 5.6]: 
let $R$ be a $K_{X}+B'$-negative extremal ray$/Z$. By \ref{ss-ext-rays-II}, $R$ is either 
a divisorial extremal ray or a flipping extremal ray (see the beginning of Section \ref{s-flips} 
for definitions), and $R$ can be contracted to an algebraic space. 
If $R$ is a divisorial extremal ray, then it can actually be contracted by a projective 
morphism, by Lemma \ref{l-div-ray}, and we continue the process. But if $R$ is a 
flipping extremal ray, then we use the generalized flip, which exists by Theorem \ref{t-flip-2}, 
and then continue the process.
 
No component of $B'$ is contracted by the LMMP: otherwise let $X_i\bir X_{i+1}$ 
be the sequence of log flips and divisorial contractions of this LMMP where $X=X_1$. 
Pick $j$ so that $\phi_j\colon X_j\bir X_{j+1}$ is a divisorial contraction 
which contracts a component $D_j$ of $B_j'$, the birational transform of $B'$. Now there is $a>0$ 
such that 
$$
K_{X_j}+B_j'=\phi_j^*(K_{X_{j+1}}+B_{j+1}')+aD_j
$$ 
Since $K_{X_{j+1}}+B_{j+1}'$ is pseudo-effective$/Z$, 
$K_{X_{j}}+B_{j}'-aD_j$ is pseudo-effective$/Z$ which implies that $K_X+B'-bD$ is pseudo-effective$/Z$ 
for some $b>0$ where 
$D$ is the birational transform of $D_j$, a contradiction.
Therefore every $(X_j,B'_j)$ has terminal singularities. 
The LMMP terminates for reasons similar to the characteristic $0$ case 
[\ref{Shokurov-nv}, Corollary 2.17][\ref{Kollar-Mori}, Theorem 6.17] (see also [\ref{HX}, proof of Theorem 1.2]).  
So we get a log minimal model of $(X,B')$ over $Z$, say $(Y,B'_{Y})$. 

Let $g\colon V\to X$ and $h\colon V\to Y$ be a common resolution. By letting 
$P=h^*(K_{Y}+B'_{Y})$ and 
$$
M=g^*(K_X+B)-h^*(K_{Y}+B'_{Y})
$$ 
we get a weak Zariski decomposition$/Z$ as 
$
g^*(K_X+B)=P+M/Z.
$
Note that $M\ge 0$ because $g^*(K_X+B')-h^*(K_{Y}+B'_{Y})\ge 0$. Therefore $(X,B)$ has a log minimal model 
over $Z$ by Proposition \ref{p-WZD}.\\
\end{proof}

\begin{proof}(of Theorem \ref{t-flip-1} in general case)
Recall that we proved the theorem when $X$ is projective, in Section \ref{s-flips}.
By perturbing the coefficients, we can assume that $(X,B)$ is klt.
By Theorem \ref{t-mmodel}, $(X,B)$ has a log minimal model over $Z$, say $(X^+,B^+)$. 
Since $(X,B)$ is klt, $X\bir X^+$ is an isomorphism in codimension one. 
Let $H^+$ be an ample$/Z$ divisor on $X^+$ and let $H$ be its birational transform 
on $X$. Since $X\to Z$ is a $K_X+B$-negative extremal contraction, $K_X+B\equiv hH/Z$ 
for some $h>0$. Thus $K_{X^+}+B^+\equiv hH^+/Z$ which means that  
$K_{X^+}+B^+$ is ample$/Z$ so we are done.\\
\end{proof}

\section{The connectedness principle with applications to semi-ampleness}\label{s-connectedness}

\subsection{Connectedness}
In this subsection, we prove the connectedness principle in dimension $\le 3$. 
The proof is based on LMMP rather than vanishing theorems. 

The following lemma is essentially [\ref{Xu}, Proposition 2.3]. We recall its proof for convenience.

\begin{lem}\label{l-ample-dlt}
Let $(X,B)$ be a projective pair of dimension $\le 3$ over $k$. 
Assume that $(X,B)$ is klt (resp. dlt) and that $A$ is a nef and big (resp. ample) $\R$-divisor.
 Then there is $0\le A'\sim_\R A$ such that $(X,B+A')$ is klt (resp. dlt).
\end{lem}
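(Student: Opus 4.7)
I would approach this as a direct transcription of the standard characteristic-$0$ argument, using the tools already available in this paper: log resolution (\ref{ss-resolution}), Kodaira's lemma, the Bertini-type statement \ref{ss-log-smooth}, and the negativity lemma of \ref{ss-negativity}.

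First I would take a log resolution $f\colon W\to X$ of $(X,B)$ and set $K_W+B_W=f^*(K_X+B)$. In the klt case, the coefficients of $B_W$ all lie in $[0,1)$. In the dlt case, choose $f$ to be an isomorphism outside the closed subset $P_0\subset X$ coming from the dlt structure; then the coefficients of $B_W$ lie in $[0,1]$ with the reduced part equal to the strict transform of $\rddown{B}$. In either case $(W,B_W)$ is log smooth.

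Since $f^*A$ is nef and big on $W$, Kodaira's lemma gives $f^*A\sim_\R H+E$ with $H$ an ample $\R$-divisor on $W$ and $E\ge 0$; after a further blow-up if necessary, assume $(W,B_W+E)$ is log smooth. For $\epsilon>0$ small, the divisor $H_\epsilon:=(1-\epsilon)f^*A+\epsilon H$ is ample (sum of nef and ample), and $f^*A\sim_\R H_\epsilon+\epsilon E$. Decomposing $H_\epsilon=\sum s_jK_j$ with $s_j>0$ and $K_j$ ample $\Q$-Cartier, and applying \ref{ss-log-smooth} iteratively to very general members of the linear systems $|n_jK_j|$ for large $n_j$, I would obtain an effective $A_W'\sim_\R H_\epsilon$ with arbitrarily small coefficients, no components in $\rddown{B_W}$, and such that $(W,B_W+\epsilon E+A_W')$ is log smooth. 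For $\epsilon$ and the coefficients of $A_W'$ small enough, the reduced part of $B_W+\epsilon E+A_W'$ coincides with $\rddown{B_W}$.

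Finally I would set $A':=f_*(A_W'+\epsilon E)$. Pushing forward the $\R$-linear equivalence yields $A'\sim_\R A$, and $A'$ is $\R$-Cartier because $A$ is. The crux is to verify $f^*A'=A_W'+\epsilon E$ on the nose: the divisor $D:=A_W'+\epsilon E-f^*A'$ is $f$-exceptional (its pushforward is zero) and $\R$-linearly equivalent to zero on $W$, so both $D$ and $-D$ are nef$/X$ with vanishing pushforward, and the negativity lemma of \ref{ss-negativity} forces $D=0$. Hence $(W,B_W+A_W'+\epsilon E)$ is a log smooth crepant pullback of $(X,B+A')$. In the klt case all coefficients are strictly less than $1$, so $(X,B+A')$ is klt. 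In the dlt case the reduced part corresponds to the strict transform of $\rddown{B}$ and every exceptional divisor has positive log discrepancy, so $(X,B+A')$ is dlt with distinguished subset $P_0$. The main obstacle is precisely this descent step, where the negativity lemma is needed to upgrade an $\R$-linear equivalence on $W$ to genuine equality of divisors.
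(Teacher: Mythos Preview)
Your strategy and the descent step via the negativity lemma are sound, and the approach is close in spirit to the paper's. But the clause ``after a further blow-up if necessary, assume $(W,B_W+E)$ is log smooth'' conceals a real difficulty. After such a blow-up the pullback of $H$ is merely nef, so $H_\epsilon$ is no longer ample and \ref{ss-log-smooth} does not apply; in the klt case this can be repaired by subtracting a small relatively-anti-ample exceptional divisor from the pullback of $H$, but you do not say so. In the dlt case there is a more serious problem: your $E$ from Kodaira is not exceptional, and if it meets a zero-dimensional stratum of $\rddown{B_W}$ (you have no control over $\Supp E$), then making $\Supp(B_W+E)$ simple normal crossing forces a blow-up at that point. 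The new exceptional divisor already has coefficient $1$ in the crepant pullback $B_W$ and picks up a positive contribution from $\epsilon E$, so $B_W+\epsilon E$ acquires a coefficient exceeding $1$ and $(X,B+A')$ fails even to be lc.

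The paper avoids both issues. For the dlt case (where $A$ is ample) it does not use Kodaira's lemma: since $f$ is a sequence of blow-ups along smooth centres there is an effective exceptional $E'$ with arbitrarily small coefficients and $-E'$ ample over $X$, so $f^*A-E'$ is ample on $W$; and because $\Supp E'$ is the exceptional locus of $f$ it is already snc with $B_W$ and lies over $P$, hence is disjoint from the strata of $\rddown{B_W}$. Thus $(W,B_W+E')$ is log smooth with no further blow-up, and one takes $A_W'\sim_\R f^*A-E'$ general and $A'=f_*A_W'$. The klt case is then reduced to this one by applying Kodaira on $X$ rather than on $W$: write $A\sim_\R G+D$ with $G$ ample and $D\ge 0$, absorb $\epsilon D$ into $B$, and replace $A$ by the ample divisor $(1-\epsilon)A+\epsilon G$.
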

\begin{proof}
First we deal with the dlt case.
Let $f\colon W\to X$ be a log resolution of $(X,B)$ which extracts only prime divisors with positive 
log discrepancy with respect to $(X,B)$. 
This exists by the definition of dlt pairs. The resolution is 
obtained by a sequence of blow ups with smooth centers, hence there is an $\R$-divisor 
$E'$ exceptional$/X$ with sufficiently small coefficients such that $-E'$ is ample$/X$ and 
$\Supp E'$ is the union of all the prime exceptional$/X$ divisors on $W$. Note that by the 
negativity lemma (\ref{ss-negativity}), $E'\ge 0$. Moreover, $f^*A-E'$ is ample$/X$.

Let $B_W$ be given by 
$$
K_W+B_W= f^*(K_X+B)
$$
By assumption, $B_W$ has coefficients at most $1$ and the coefficient of any 
prime exceptional$/X$ divisor is less than $1$. 
Let $A_W'\sim_\R f^*A-E'$ be general and let $A':=f_*A_W'$. Then $A'\sim_\R A$ and we can write 
$$
K_W+B_W+A_W'+E'= f^*(K_X+B+A')
$$
where we can make sure that the coefficients of $B_W+A_W'+E'$ are at most $1$ and that the coefficient of 
any prime exceptional$/X$ divisor is less than $1$ because the 
coefficients of $E'$ are sufficiently small. This implies that $(X,B+A')$ is dlt.

Now we deal with the klt case. Since $A$ is nef and big, by definition, 
$A\sim_\R G+D$ with $G\ge 0$ ample and $D\ge 0$. So by 
replacing $A$ with $(1-\epsilon)A+\epsilon G$ and replacing $B$ with $B+\epsilon D$ 
we can assume that $A$ is ample. Now apply the dlt case.\\
\end{proof}

\begin{proof}(of Theorem \ref{t-connectedness-d-3})
Assume that the statement does not hold for some $z$. 
By Lemma \ref{l-dlt-model-proj}, there is a $\Q$-factorial dlt pair $(Y,B_Y)$ and a 
birational morphism $g\colon Y\to X$ with $K_Y+B_Y$ nef$/X$, every exceptional divisor of 
$g$ is a component of $\rddown{B_Y}$, and $g_*B_Y=B$. 
Moreover, $K_Y+B_Y+E_Y=f^*(K_X+B)$ for some $E_Y\ge 0$ with $\Supp E_Y\subseteq \rddown{B_Y}$. 
Also the non-klt locus of $(Y,B_Y)$, that is $\rddown{B_Y}$, maps surjectively onto the non-klt locus of $(X,B)$ 
hence  $\rddown{B_Y}$ is not connected in some neighborhood of $Y_z$.

Now by assumptions, $K_Y+B_Y+E_Y+L_Y\sim_\R 0/Z$ for some globally nef and big $\R$-divisor $L_Y$.
Since $X$ is $\Q$-factorial, we can write $L_Y\sim_\R A_Y+D_Y$ where $A_Y$ is ample and $D_Y\ge 0$ 
is exceptional$/X$. In particular, $\Supp D_Y\subset \rddown{B_Y}$. By picking 
a general 
$$
G_Y\sim_\R \epsilon A_Y+(1-\epsilon)L_Y-\delta \rddown{B_Y}
$$ 
for some small $\delta>0$ 
and applying Lemma \ref{l-ample-dlt} we can assume that  $(Y,B_Y+G_Y)$ 
is dlt. By construction, 
$$
K_Y+B_Y+G_Y\sim_\R P_Y:=-\epsilon D_Y-E_Y-\delta\rddown{B_Y}/Z
$$
and $\Supp P_Y=\rddown{B_Y}$. 

Run a generalized LMMP$/Z$ on $K_Y+B_Y+G_Y$ as in the proof of Theorem \ref{t-mmodel}. 
We show that this is actually a usual LMMP hence it terminates by special termination (\ref{p-st}). 
 Assume that we have arrived at a model $Y'$ 
and let $R$ be a $K_{Y'}+B_{Y'}+G_{Y'}$-negative extremal ray$/Z$. Since $Y'\to Z$ is birational, 
$R$ is either a divisorial extremal ray or a flipping extremal ray. 
In the former case $R$ can be contracted by a projective morphism by Lemma \ref{l-div-ray}. 
So assume $R$ is a flipping extremal ray. Then the generalized flip $Y'\bir Y''/V$ exists by 
Theorem \ref{t-flip-2} where $Y'\to V$ is the contraction of $R$ to the algebraic space $V$.
Since $P_{Y'}\cdot R<0$, some component $S_{Y'}$ of $\rddown{B_{Y'}}$ intersects $R$ positively.  
Now there is a boundary $\Delta_{Y'}$ such that $(Y',\Delta_{Y'})$ is plt, $S_{Y'}=\rddown{\Delta_{Y'}}$, 
and $(K_{Y'}+\Delta_{Y'})\cdot R=0$. But then we can find $N_{Y''}\ge 0$ such that 
$(Y'',\Delta_{Y''}+N_{Y''})$ is plt and $(K_{Y''}+\Delta_{Y''}+N_{Y''})\cdot R<0$. 
Therefore by \ref{ss-pl-ext-rays} and \ref{ss-good-exc-locus}, $Y''\to V$ is a projective morphism which implies that 
$Y'\to V$ is also a projective morphism and that the flip is a usual flip.

We claim that the connected components of $\rddown{B_Y}$ over $z$ remain disjoint over $z$ 
in the course of the LMMP: assume not and let $Y'$
be the first model in the process such that there are irreducible components 
$S_Y,T_Y$  of $\rddown{B_Y}$  belonging to disjoint connected components over $z$ 
such that $S_{Y'},T_{Y'}$ intersect over $z$. 
Let $\Delta_Y=B_Y-\tau (\rddown{B_Y}-S_Y-T_Y)$ for some small $\tau>0$. Then $(Y,\Delta_Y+G_Y)$ 
is plt in some neighborhood of $Y_z$ because $\rddown{\Delta_Y+G_Y}=S_Y+T_Y$ and $S_Y,T_Y$ 
are disjoint over $z$. Moreover, $Y\bir Y'$ is a partial LMMP 
on $K_Y+\Delta_Y+G_Y$ hence $(Y',\Delta_{Y'}+G_{Y'})$ is also plt over $z$. But since $S_{Y'},T_{Y'}$ 
intersect over $z$, $(Y',\Delta_{Y'}+G_{Y'})$ cannot be plt over $z$, 
a contradiction.

Next we claim that no connected component of $\rddown{B_Y}$ over $z$ 
can be contracted by the LMMP (although some of their 
irreducible components might be contracted). By construction $-P_Y\ge 0$ and $\Supp -P_Y=\rddown{B_Y}$, and  
$-P_Y$ is positive on each extremal ray in the LMMP. Write $-P_Y=\sum -P_Y^i$ where $-P_Y^i$ 
are the connected components of $-P_Y$ over $z$. By the previous paragraph, $-P_Y^i$ and $-P_Y^j$ 
remain disjoint during the LMMP if $i\neq j$. Moreover, if we arrive a model $Y'$ in the LMMP 
on which we contract an extremal ray $R$, then $-P_{Y'}^j\cdot R>0$ for some $j$ and 
$-P_{Y'}^i\cdot R=0$ for $i\neq j$. Therefore the contraction 
of $R$ cannot contract any of the $-P_{Y'}^i$.

The LMMP ends up with a log minimal model $(Y',B_{Y'}+G_{Y'})$ over $Z$. Then 
$P_{Y'}$ is nef$/Z$.  Assume that $Y_z'\nsubseteq \Supp P_{Y'}$ set-theoretically. Since  $Y_z'$ intersects 
$\Supp P_{Y'}$, there is some curve $C\subset Y_z'$ not contained in $\Supp P_{Y'}$
but intersects it. Then as $-P_{Y'}\ge 0$ we have $-P_{Y'}\cdot C>0$ hence $P_{Y'}\cdot C<0$, 
 a contradiction. 
Now since $Y_z'$ is connected, it is contained in exactly one connected 
component of $\rddown{B_{Y'}}$ over $z$. This is a contradiction because by assumptions 
at least two connected components of $\rddown{B_{Y'}}$ over $z$ intersect the fibre $Y_z'$.\\
\end{proof}

We now show that a strong form of the connectedness principle holds on surfaces.

\begin{thm}\label{t-connectedness-d-2}
Let $(X,B)$ be a $\Q$-factorial projective pair of dimension $2$ over $k$. 
Let $f\colon X\to Z$ be a projective contraction (not necessarily birational) such that 
$-(K_X+B)$ is ample$/Z$. Then for any closed point $z\in Z$, the non-klt locus $N$ of 
$(X,B)$ is connected in any neighborhood of the fibre $X_z$ over $z$. More strongly, $N\cap X_z$ 
is connected.
\end{thm}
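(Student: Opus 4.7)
The plan is to follow the strategy of Theorem \ref{t-connectedness-d-3}, adapted to dimension $2$, using surface LMMP (which is classical in arbitrary characteristic) in place of the $3$-fold tools. The key point is that no characteristic-specific ingredient such as Kawamata--Viehweg vanishing or generalized flips is needed, so the structural argument transfers cleanly.

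Suppose for contradiction that $N\cap X_z$ is disconnected. Take a log resolution $g\colon Y\to X$ of $(X,\Supp B)$ producing a $\Q$-factorial dlt pair $(Y,B_Y)$ with $K_Y+B_Y+E_Y = g^*(K_X+B)$, where every exceptional divisor of $g$ is a component of $\rddown{B_Y}$ and $E_Y\geq 0$ is supported on $\rddown{B_Y}$. Since $Y\to Z$ is a contraction (composition of contractions), $Y_z := g^{-1}(X_z)$ is connected; and since $\rddown{B_Y}\cap Y_z$ surjects onto $N\cap X_z$ under the proper map $g$, the closed set $\rddown{B_Y}\cap Y_z$ is disconnected. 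By the ampleness of $-(K_X+B)/Z$, we have $K_Y+B_Y+E_Y+L_Y \sim_\R 0/Z$ for some nef and big$/Z$ divisor $L_Y$; write $L_Y\sim_\R A_Y+D_Y$ with $A_Y$ ample and $D_Y\geq 0$ exceptional$/X$, and pick a general $G_Y\sim_\R \epsilon A_Y+(1-\epsilon)L_Y-\delta\rddown{B_Y}$ for small $\epsilon,\delta>0$. Then by the surface analog of Lemma \ref{l-ample-dlt}, $(Y,B_Y+G_Y)$ is dlt and $K_Y+B_Y+G_Y\sim_\R P_Y/Z$ with $-P_Y = \epsilon D_Y+E_Y+\delta\rddown{B_Y}$ effective and with support equal to $\rddown{B_Y}$.

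Now run a surface LMMP$/Z$ on $K_Y+B_Y+G_Y$. Each extremal ray in the process has $-P_Y$ (hence some component of $\rddown{B_Y}$) positive on it, so it is a pl-extremal ray and can be contracted projectively; surface LMMPs terminate. As in the proof of Theorem \ref{t-connectedness-d-3}, one checks that distinct connected components of $\rddown{B_Y}$ over $z$ remain disjoint throughout and no connected component is entirely contracted. On the final model $Y'$, the divisor $P_{Y'}$ is nef$/Z$ while $-P_{Y'}\geq 0$ is supported on $\rddown{B_{Y'}}$; if some curve $C\subset Y'_z$ were not contained in $\Supp P_{Y'}$, intersecting $C$ with $-P_{Y'}$ would give $-P_{Y'}\cdot C>0$, i.e.\ $P_{Y'}\cdot C<0$, contradicting nefness. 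Thus $Y'_z\subseteq\rddown{B_{Y'}}$, and connectedness of $Y'_z$ forces it to lie in a single connected component of $\rddown{B_{Y'}}$, contradicting disconnection.

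The main obstacle is just verifying that each ingredient of the $3$-fold proof has a surface analog available in arbitrary characteristic: surface dlt models (via log resolutions, which exist for surfaces), surface LMMP with termination (classical, cf.\ Tanaka), and the surface version of Lemma \ref{l-ample-dlt}. All of these are well-established and do not require vanishing theorems, so the argument goes through essentially verbatim from the $3$-dimensional case.
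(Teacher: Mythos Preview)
Your approach follows the $3$-fold argument too closely and misses two essential points that the surface case requires.

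First, there is a conflation between ``$\rddown{B_Y}\cap Y_z$ is disconnected'' and ``$\rddown{B_Y}$ has at least two connected components in a neighborhood of $Y_z$.'' You correctly deduce the former from the disconnectedness of $N\cap X_z$, but then you invoke the tracking argument from Theorem~\ref{t-connectedness-d-3}, which only tracks the \emph{latter}. These are not equivalent: when $\dim Z=1$, a horizontal component of $\rddown{B_Y}$ may meet $Y_z$ in several points while $\rddown{B_Y}$ is connected near $Y_z$. To prove the stronger claim $N\cap X_z$ connected, the paper supplies an additional argument tracking $\rddown{B_W}\cap W_z$ through each birational step $\phi\colon W\to V$: if it became connected, two points $w,w'$ in distinct components of $\rddown{B_W}\cap W_z$ would land on the same point of the exceptional curve $C$; perturbing to a plt pair $(W,\Gamma_W)$ with $\rddown{\Gamma_W}\cap C$ finite of cardinality $\ge 2$ forces $\rddown{\Gamma_V}$ to have two branches at $\phi(w)$, contradicting normality of $\rddown{\Gamma_V}$ (hence plt-ness). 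This step has no analogue in the $3$-fold proof and cannot be skipped.

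Second, and more seriously, you assert that on the final model $P_{Y'}$ is nef$/Z$, i.e.\ that the LMMP ends with a minimal model. In Theorem~\ref{t-connectedness-d-3} the map $f$ is assumed \emph{birational}, so the LMMP$/Z$ consists only of birational steps and indeed terminates with a minimal model. Here $f$ need not be birational: when $\dim Z\le 1$, the LMMP on $K_Y+B_Y+G_Y\sim_\R P_Y/Z$ with $-P_Y\ge 0$ can terminate with a Mori fibre space $Y'\to Z'/Z$. In fact, the paper shows this \emph{must} happen under the disconnectedness hypothesis, and then handles the Mori fibre space endgame separately: when $Z'$ is a curve, one restricts to a general fibre $F\simeq\PP^1$ and plays $\deg (B_{Y'}+G_{Y'})|_F<2$ against $\rddown{B_{Y'}}\cdot F\ge 2$; when $Z'$ is a point, either $\rho(Y')=1$ gives an immediate contradiction or one reduces to the curve case. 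Without this analysis your argument is incomplete precisely in the non-birational situation that distinguishes the surface statement from the $3$-fold one.
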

\begin{proof}
It is enough to prove the last claim.
Assume that  $N\cap X_z$ is not connected for some $z$. 
We use the notation and the arguments of the proof of Theorem \ref{t-connectedness-d-3}. 
Let $(Y,B_Y)$ be the pair constructed over $X$ and $Y\bir Y'$ the LMMP$/Z$  
on $K_Y+B_Y+G_Y\sim_\R P_Y$ and $h\colon Y'\to Z$ the corresponding map. The same arguments 
of the proof of Theorem \ref{t-connectedness-d-3} show that the connected components of 
$P_Y$ over $z$ remain disjoint in the course of the LMMP and none of them will be contracted.

 By assumptions, 
$\rddown{B_Y}\cap Y_z$ is not connected. 
We claim that the same holds in the course of the LMMP.
If not, then at some step of the LMMP we arrive at a model $W$ with a $K_W+B_W+G_W$-negative 
extremal birational contraction $\phi\colon W\to V$ such that $\rddown{B_W}\cap W_z$ is not connected but 
$\rddown{B_V}\cap V_z$ is connected. Let $C$ be the exceptional curve of $W\to V$. 
Now $\phi(\rddown{B_W})=\rddown{B_V}$: the inclusion $\supseteq$ is clear; 
the inclusion $\subseteq$ follows from the fact that if $C$ is a component of $\rddown{B_W}$, 
then at least one other irreducible component of $\rddown{B_W}$ intersects $C$ because $P_W\cdot C<0$. 
Therefore $\phi(\rddown{B_W}\cap W_z)=\rddown{B_V}\cap V_z$. Since $\rddown{B_V}\cap V_z$ is connected 
but $\rddown{B_W}\cap W_z$ is not connected, there exist two connected components of 
$\rddown{B_W}\cap W_z$ whose images under $\phi$ intersect. So there are closed points 
$w,w'$ belonging to different connected components of $\rddown{B_W}\cap W_z$ such that $\phi(w)=\phi(w')$. 
In particular, $w,w'\in C$. Note that $C$ is not a component of $\rddown{B_W}$ otherwise 
$C\subset \rddown{B_W}\cap W_z$ connects $w,w'$ which contradicts the assumptions.
Therefore $\rddown{B_W}\cap C$ is a finite set of closed points with more than one element.
Now perturbing the coefficients of 
$B_W$ we can find a $\Gamma_W\le \rddown{B_W}$ such that
$(W,\Gamma_W)$ is plt in a neighborhood of $C$, $(K_W+\Gamma_W)\cdot C<0$ and such 
that $\rddown{\Gamma_W}\cap C$ is a finite set of closed points with more than one element.
Then in a formal neighborhood of $\phi(w)$, $\rddown{\Gamma_V}$ has at least two branches which implies that 
$\rddown{\Gamma_V}$ is not normal which in turn contradicts the plt property of 
$(V,\Gamma_V)$. 

Since $\rddown{B_{Y'}}\cap Y_z'$ is not connected, there is a component $D$ of $Y_z'$ 
not contained in $\Supp P_{Y'}=\rddown{B_{Y'}}$ but intersects it. Thus $P_{Y'}$ cannot be nef$/Z$ 
as $-P_{Y'}\ge 0$. Therefore 
the LMMP terminates with a Mori fibre space $Y'\to Z'/Z$.  
If $Z'$ is a point, then  $\rddown{B_{Y'}}$ has at least two disjoint 
irreducible components which contradicts the fact that the Picard number $\rho(Y')=1$ in this case.
So we can assume that $Z'$ is a curve. 

Assume that $Z$ is also a curve in which case $Z'=Z$.
Let $F$ be the reduced variety associated to a general fibre of $Y'\to Z'$. 
Then by the adjunction formula we get $F\simeq \PP^1$, $K_{Y'}\cdot F=-2$, and 
$(B_{Y'}+G_{Y'})\cdot F<2$. On the other hand, 
since $\rddown{B_{Y'}}\cap Y'_z$ has at least two points, $\rddown{B_{Y'}}\cap F$ also 
has at least two points hence  
$$
(B_{Y'}+G_{Y'})\cdot F\ge (\rddown{B_{Y'}}+G_{Y'})\cdot F>2
$$  
which is a contradiction.
Now assume that $Z$ is a point. Since $\rddown{B_{Y'}}\cap Y_z'$ is not connected,
 $\rddown{B_{Y'}}$ has at least two disjoint 
connected components, say $M_{Y'},N_{Y'}$. On the other hand, since $P_{Y'}\cdot F<0$, 
we may assume that $M_{Y'}$ 
intersects $F$ (hence $M_{Y'}$ intersects every fibre of $Y'\to Z'$). 
If some component of $N_{Y'}$ is vertical$/Z'$, 
then $M_{Y'},N_{Y'}$ intersect a contradiction. Thus each component of $N_{Y'}$ is horizontal$/Z'$ 
hence they intersect each 
fibre of $Y'\to Z'$. But then we can get a contradiction as in the $Z'=Z$ case.\\
\end{proof}

\subsection{Semi-ampleness}
We use the connectedness principle on surfaces to prove some semi-ampleness results in dimension $2$ and $3$.
These are not only interesting on their own but also useful for the proof of the finite generation (\ref{t-fg}).

\begin{proof}(of Theorem \ref{t-sa-reduced-boundary})
Let $S\le \rddown{B}$ be a reduced divisor. Assume that 
$(K_X+B+A)|_S$ is not semi-ample. We will derive a contradiction. 
We can assume that if $S'\lneq S$ is any other reduced divisor, then 
$(K_X+B+A)|_{S'}$ is semi-ample. Note that $S$ cannot be irreducible by abundance for surfaces (cf. 
[\ref{Tanaka}]). Using the ample divisor $A$ and applying Lemma \ref{l-ample-dlt}, we can perturb the coefficients 
of $B$ so that we can assume $S=\rddown{B}$. 

Let $T$ be an irreducible component of $S$ and let $S'=S-T$. 
By assumptions,  $(K_X+B+A)|_{T}$ and $(K_X+B+A)|_{S'}$ are both semi-ample.
Let $g\colon T\to Z$ be the projective contraction associated to $(K_X+B+A)|_{T}$.
By adjunction define $K_T+B_T:=(K_X+B)|_T$ and $A_T=A|_T$. Since $K_T+B_T+A_T\sim_\Q 0/Z$ and 
since $A_T$ is ample, $-(K_T+B_T)$ is ample$/Z$. Moreover,  $S'\cap T=\rddown{B_T}$ as topological spaces. 
By the connectedness principle for surfaces (\ref{t-connectedness-d-2}), $\rddown{B_T}\to Z$ has 
connected fibres hence $S'\cap T\to Z$ also has connected fibres.
Now apply Keel [\ref{Keel}, Corollary 2.9].\\ 
\end{proof}

\begin{thm}\label{t-sa-reduced-boundary-2}
Let $(X,B+A)$ be a projective $\Q$-factorial dlt pair of dimension $3$ over $k$ of 
char $p>5$. Assume that 

$\bullet$ $A,B\ge 0$ are $\Q$-divisors with $A$ ample,

$\bullet$ $(Y,B_Y+A_Y)$ is a $\Q$-factorial weak lc model of $(X,B+A)$,

$\bullet$ $Y\bir X$ does not contract any divisor,

$\bullet$  $\Supp A_{Y}$ does not contain any lc centre of $(Y,B_{Y}+A_{Y})$, 

$\bullet$ if $\Sigma$ is a connected component of $\mathbb{E}(K_{Y}+B_{Y}+A_{Y})$ and 
$\Sigma \nsubseteq \rddown{B_{Y}}$, then 
$(K_{Y}+B_{Y}+A_{Y})|_{\Sigma}$ is semi-ample.\\

Then $K_{Y}+B_{Y}+A_{Y}$ is semi-ample. 
\end{thm}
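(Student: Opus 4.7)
Let $L:=K_Y+B_Y+A_Y$, which is nef by the weak lc model hypothesis. The plan is to apply Keel's criterion, Theorem \ref{t-Keel-1}: $L$ is semi-ample on $Y$ if and only if $L|_{\mathbb{E}(L)}$ is semi-ample, and this can be checked one connected component of $\mathbb{E}(L)$ at a time. The last hypothesis directly takes care of every connected component $\Sigma$ with $\Sigma\not\subseteq\rddown{B_Y}$, so the problem reduces to showing that $L|_\Sigma$ is semi-ample when $\Sigma\subseteq\rddown{B_Y}$.

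For such $\Sigma$ I would first establish that $L|_{S_Y}$ is semi-ample for each irreducible component $S_Y$ of $\rddown{B_Y}$ that meets $\Sigma$. The crucial observation is that the ampleness hypothesis needed to apply Theorem \ref{t-sa-reduced-boundary} holds on $X$ even though it fails on $Y$: $(X,B+A)$ itself is projective $\Q$-factorial dlt of dimension $3$ with $A$ ample, so Theorem \ref{t-sa-reduced-boundary} yields that $(K_X+B+A)|_{\rddown{B}}$ is semi-ample. Since $Y\bir X$ contracts no divisor, each component $S_Y$ of $\rddown{B_Y}$ is the birational transform of a component $S$ of $\rddown{B}$, and $S$ is normal by Lemma \ref{l-plt-normal}. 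On a common log resolution $f\colon W\to X$, $g\colon W\to Y$ the weak lc model property gives
\[
f^*(K_X+B+A)=g^*L+F,\qquad F\ge 0 \text{ exceptional over } Y.
\]
Restricting to the birational transform $S_W$ of $S$ on $W$ (not a component of $F$, as $S_W$ is non-exceptional), the right hand side becomes $g_S^*(L|_{S_Y})+F|_{S_W}$ with $F|_{S_W}$ effective and exceptional over $S_Y$, while the left hand side is $f_S^*((K_X+B+A)|_S)$, which is semi-ample. The negativity lemma on $S_W\to S_Y$ and descent of semi-ampleness along the birational morphism $g_S$ (using $(g_S)_*\mathcal{O}_{S_W}=\mathcal{O}_{S_Y}$) then give semi-ampleness of $L|_{S_Y}$. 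The hypothesis that $\Supp A_Y$ contains no lc centre of $(Y,B_Y+A_Y)$ is used to ensure the different on $S_Y$ is well-defined and matches the one computed on $S$, so the restriction of the displayed identity is legitimate.

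Having semi-ampleness of $L$ on each component $S_Y$ of $\rddown{B_Y}$ meeting $\Sigma$, I would glue these across pairwise intersections $S_Y\cap T_Y$ by applying Keel's gluing result [\ref{Keel}, Corollary 2.9], exactly as in the proof of Theorem \ref{t-sa-reduced-boundary}. The required input is that, over the target of the semi-ample morphism on $S_Y$, the intersection $S_Y\cap T_Y$ has connected fibres; this is provided by the connectedness principle on surfaces, Theorem \ref{t-connectedness-d-2}, applied on each normal surface $S_Y$ to the adjoint pair whose log divisor is anti-ample over the target precisely because $L|_{S_Y}$ contracts $\Sigma$. Combining the resulting semi-ampleness of $L|_\Sigma$ with the components handled by hypothesis, Keel's Theorem \ref{t-Keel-1} gives that $L$ is semi-ample globally on $Y$.

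The main technical obstacle I anticipate is the transfer step of the second paragraph: justifying that the birational correspondence $S_Y\bir S$ between the normal surfaces is well enough behaved to transport semi-ampleness, in particular verifying that the pieces of $F|_{S_W}$ really are exceptional over $S_Y$ and that the adjunction formulas on the two sides are genuinely compatible. It is here that the assumptions that $\Supp A_Y$ avoids lc centres and that $Y\bir X$ is an isomorphism in codimension one should do the work, by forcing the restrictions to behave as in a small birational map between surfaces.
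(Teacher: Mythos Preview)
Your plan has a genuine gap at the point where you invoke Theorem \ref{t-sa-reduced-boundary} on $X$. That theorem has the hypothesis that $(K_X+B+A)|_{\rddown{B}}$ be \emph{nef}, and nothing in the setup guarantees this: $K_X+B+A$ is typically not nef on $X$ (that is the whole reason for passing to the weak lc model $Y$), and curves in $\rddown{B}$ lying over the extremal rays contracted by $X\dashrightarrow Y$ will have negative intersection with $K_X+B+A$. So you cannot conclude that $(K_X+B+A)|_{\rddown{B}}$ is semi-ample, and the transfer argument that follows never gets off the ground. Your own identity $f^*(K_X+B+A)=g^*L+F$ with $F\ge 0$ already shows the problem: restricted to $S_W$, the left-hand side need not be nef even though $g^*L|_{S_W}$ is.

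The paper circumvents this by producing an ample divisor \emph{on $Y$} rather than transferring from $X$. The key geometric observation is that, because $A$ is ample, $X$ is $\Q$-factorial, and $Y\dashrightarrow X$ contracts no divisor, the support of $A_Y$ contains the entire locus $Y\setminus\phi(U)$ where $\phi$ fails to be an isomorphism. Together with the hypothesis that $\Supp A_Y$ avoids the lc centres of $(Y,B_Y+A_Y)$, this lets one perturb: pick a small ample $G_Y$ on $Y$, a general $C\sim_\Q A-G$ on $X$, and set $\Gamma_Y=B_Y+(1-\epsilon)A_Y+\epsilon C_Y+\epsilon G_Y$. Then $K_Y+\Gamma_Y\sim_\Q L$, $\rddown{\Gamma_Y}=\rddown{B_Y}$, and $(Y,\Gamma_Y)$ is dlt (this is where the lc-centre hypothesis is actually used). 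Now Theorem \ref{t-sa-reduced-boundary} applies \emph{directly on $Y$} with the genuine ample divisor $\epsilon G_Y$, giving $L|_{\rddown{B_Y}}$ semi-ample in one stroke; Keel's Theorem \ref{t-Keel-1} then finishes as you indicated. No birational transfer or separate gluing step on $Y$ is needed.
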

\begin{proof}
Note that if $K_X+B+A$ is not big, then $\mathbb{E}(K_{Y}+B_{Y}+A_{Y})=Y$ 
hence the statement is trivial. So we can assume that $K_X+B+A$ is big.
Let $\phi$ denote the map $X\bir Y$ and let $U$ be the largest open set 
over which $\phi$ is an isomorphism. Then since $A$ is ample and $X$ is $\Q$-factorial, $\Supp A_Y$  
contains $Y\setminus \phi(U)$: indeed let $y\in Y\setminus \phi(U)$ be a closed point and let $W$ be the normalization 
of the graph of $\phi$, and $\alpha\colon W\to X$ and $\beta\colon W\to Y$ be the 
corresponding morphisms; first assume that $\dim \beta^{-1}\{y\}>0$; then 
$\alpha^*A$ intersects $\beta^{-1}\{y\}$ because $A$ is ample hence $\Supp A_Y$ contains 
$y$; now assume that $\dim \beta^{-1}\{y\}=0$; then $\beta$ is an isomorphism over 
$y$; on the other hand, $\alpha$ cannot be an isomorphism near $\beta^{-1}\{y\}$ otherwise 
$\phi$ would be an isomorphism near $\alpha(\beta^{-1}\{y\})$ hence $y\in \phi(U)$, 
a contradiction; thus as $X$ is $\Q$-factorial, $\alpha$ contracts some prime divisor 
$E$ containing $\beta^{-1}\{y\}$; but then $Y\bir X$ contracts a divisor, a contradiction. 

Let $C\ge 0$ be any $\Q$-divisor 
such that $(X,B+A+C)$ is dlt.  Then $(Y,B_{Y}+A_{Y}+\epsilon C_Y)$ is dlt 
for any sufficiently small $\epsilon>0$ because $(Y,B_{Y}+A_{Y})$ has no lc 
centre inside $Y\setminus \phi(U)\subset \Supp A_Y$. 
 Now let $G_{Y}\ge 0$ be a general small ample $\Q$-divisor 
on $Y$ and $G$ its birational transform on $X$. Since $G$ is small, $A-G$ is ample. 
Let $C\sim_\Q A-G$ be a general $\Q$-divisor. 
Let
$$
\Gamma_{Y}:=B_{Y}+(1-\epsilon)A_{Y}+\epsilon C_{Y}+\epsilon G_{Y}
$$
Then 
$$
K_{Y}+\Gamma_{Y}\sim_\Q  K_{Y}+B_{Y}+A_{Y}
$$
and $\rddown{B_{Y}}=\rddown{\Gamma_{Y}}$.
Moreover, by the above remarks and by Lemma \ref{l-ample-dlt} we can assume that 
$(Y,\Gamma_{Y})$ is dlt.

Now by Theorem \ref{t-sa-reduced-boundary}, $(K_{Y}+\Gamma_{Y})|_{\rddown{\Gamma_Y}}$ is semi-ample 
hence $(K_{Y}+B_{Y}+A_{Y})|_{\rddown{B_Y}}$ is semi-ample. Therefore $(K_{Y}+B_{Y}+A_{Y})|_{\Sigma}$ 
is semi-ample for any connected component of $\mathbb{E}(K_{Y}+B_{Y}+A_{Y})$ hence we can  
apply Theorem \ref{t-Keel-1}.\\
\end{proof}

\section{Finite generation and base point freeness}\label{s-fg}

\subsection{Finite generation} 
In this subsection we prove Theorem \ref{t-fg}.

\begin{lem}\label{l-fg-decrease}
Let $(X,B)$ be a pair  
and $M$ a $\Q$-divisor satisfying the following properties:

$(1)$ $(X,\Supp(B+M))$ is projective log smooth of dimension $3$ over $k$ of char $p>5$, 

$(2)$ $K_X+B$ is a big $\Q$-divisor,

$(3)$ $K_X+B\sim_\Q M\ge 0$ and $\rddown{B}\subset \Supp M\subseteq \Supp B$,

$(4)$ $M=A+D$ where $A$ is an ample $\Q$-divisor and $D\ge 0$,
 
$(5)$ $\alpha M=N+C$ for some rational number $\alpha>0$ such that $N,C\ge 0$ are $\Q$-divisors, 
$\Supp N=\rddown{B}$, and $(X,B+C)$ is dlt,

$(6)$ there is an ample $\Q$-divisor $ A'\ge 0$ such that $A'\le A$ and $A'\le C$.\\
  
If $(X,B+tC)$ has an lc model for some real number $t\in (0,1]$, then 
$(X,B+(t-\epsilon)C)$ also has an lc model for any 
sufficiently small $\epsilon>0$.
\end{lem}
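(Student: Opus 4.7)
Plan. Let $(Y, B_Y + tC_Y)$ be the lc model of $(X, B+tC)$ given by hypothesis, so $K_Y + B_Y + tC_Y$ is ample. Using Theorem \ref{cor-dlt-model}, fix a $\Q$-factorial dlt model $g\colon (Y', B_{Y'} + tC_{Y'}) \to (Y, B_Y + tC_Y)$; then $(Y', B_{Y'} + tC_{Y'})$ is a log minimal model of $(X, B+tC)$ on which $K + B + tC = g^*(K_Y + B_Y + tC_Y)$ is nef, big, and trivial over $Y$. The goal splits into two parts: (i) produce a $\Q$-factorial dlt log minimal model $(Y'', B_{Y''} + (t-\epsilon)C_{Y''})$ of $(X, B + (t-\epsilon)C)$ by running an LMMP from $Y'$, and then (ii) show $K_{Y''} + B_{Y''} + (t-\epsilon)C_{Y''}$ is semi-ample, whose ample model is the lc model sought.

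For (i), I run the LMMP$/Y$ on $K_{Y'} + B_{Y'} + (t-\epsilon)C_{Y'}$ with scaling of $\epsilon C_{Y'}$, as in \ref{ss-g-LMMP-scaling}. Inductively, $K + B + tC$ persists as the pullback of $K_Y + B_Y + tC_Y$ throughout the LMMP (both divisorial contractions and flips are over $Y$), hence is trivial over $Y$ at every stage. The central claim is that every extremal ray $R$ encountered is a pl-extremal ray. Indeed, on any model $Y'_i$, triviality of $K + B + tC$ over $Y$ gives $(K + B + tC) \cdot R = 0$, whence $C_{Y'_i} \cdot R > 0$ and $(K_{Y'_i} + B_{Y'_i}) \cdot R = -t\, C_{Y'_i} \cdot R < 0$. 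By conditions (3) and (5), $K_X + B \sim_\Q M = \tfrac{1}{\alpha}(N + C)$ with $\Supp N = \rddown{B}$; comparing via a common resolution of $X$ and $Y'_i$ yields a linear equivalence
\[
K_{Y'_i} + B_{Y'_i} \;\sim_\Q\; \tfrac{1}{\alpha}(N_{Y'_i} + C_{Y'_i}) + H_i,
\]
where $H_i \geq 0$ is supported on the divisors of $Y'_i$ that are exceptional over $X$; each such divisor is an lc centre of $(Y'_i, B_{Y'_i} + tC_{Y'_i})$ (being extracted by the crepant dlt map to $Y$), hence a component of $\rddown{B_{Y'_i}}$. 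Multiplying by $\alpha$ and pairing with $R$ gives
\[
(N_{Y'_i} + \alpha H_i) \cdot R \;=\; \alpha(K_{Y'_i} + B_{Y'_i}) \cdot R - C_{Y'_i} \cdot R \;=\; -(1 + \alpha t)\, C_{Y'_i} \cdot R \;<\; 0,
\]
so some component of $\rddown{B_{Y'_i}}$ meets $R$ negatively, confirming that $R$ is pl. By \ref{ss-pl-ext-rays}, $R$ is contracted by a projective morphism, and any required flip exists by Theorem \ref{t-flip-1} in the projective case; the $\Q$-factorial dlt property is preserved. Special termination (Proposition \ref{p-st}) then forces the LMMP to terminate after finitely many steps, producing the desired $Y''$.

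For (ii), I would use Theorem \ref{t-sa-reduced-boundary-2}: condition (6) allows the decomposition $B_{Y''} + (t-\epsilon)C_{Y''} = \bigl(B_{Y''} + (t-\epsilon)(C - A')_{Y''}\bigr) + (t-\epsilon)A'_{Y''}$, in which $(t-\epsilon)A'$ is ample on $X$ and serves as the ample component required by that theorem. One then verifies the remaining hypotheses: that $Y'' \dashrightarrow X$ contracts no divisor (by tracking which divisors the LMMP contracts), that $\Supp (t-\epsilon)A'_{Y''}$ contains no lc centre, and that the restriction of $K + B + (t-\epsilon)C$ to every connected component of its exceptional locus not contained in $\rddown{B_{Y''}}$ is semi-ample. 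The principal obstacles are (a) maintaining the decomposition $K_{Y'_i} + B_{Y'_i} \sim_\Q M_{Y'_i} + H_i$ with $H_i$ supported in $\rddown{B_{Y'_i}}$ across each divisorial contraction and flip (which reduces to careful bookkeeping of how components of $B$ and divisors extracted over $X$ transform), and (b) verifying the semi-ampleness of the restriction in (ii), which reduces to a semi-ampleness question on a surface and can typically be handled via Theorem \ref{t-sa-reduced-boundary} together with Keel's Theorem \ref{t-Keel-1}.
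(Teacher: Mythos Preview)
Your overall architecture (build a model on which $K+B+(t-\epsilon)C$ is nef, then deduce semi-ampleness via Theorem~\ref{t-sa-reduced-boundary-2}) matches the paper's, but you are missing the device that makes the paper's proof go through cleanly. The paper does not take a dlt model of the lc model $T$ and then run an LMMP; instead it observes that for small $\delta>0$ the pair $(X,\Delta):=(X,B-\delta(N+C))$ is \emph{klt} and $K_X+B+sC$ is a positive multiple of $K_X+\Delta+s'C$ for suitable $s'$. Applying Theorem~\ref{t-mmodel} to the klt pair immediately produces a $\Q$-factorial weak lc model $(Y,B_Y+sC_Y)$ with the crucial property that $Y\dashrightarrow X$ does not contract divisors. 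Your route---dlt model of $(Y,B_Y+tC_Y)$ followed by LMMP$/Y$---can extract divisors of log discrepancy zero over $X$, and nothing in your LMMP forces these to be contracted again (if $C$ vanishes near the centre of such a divisor, the LMMP is an isomorphism there). This is exactly your ``obstacle (a)'', and it is a genuine obstruction to invoking Theorem~\ref{t-sa-reduced-boundary-2}, whose hypothesis requires $Y\dashrightarrow X$ not to contract divisors.

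Two further points. First, the paper treats irrational $t$ separately: then $C_Y\equiv 0/T$, one argues $C_T$ is $\Q$-Cartier (via rationality of the affine space of $\R$-Cartier boundaries), and $T$ is already the lc model for $t-\epsilon$; your argument does not isolate this case. Second, in your part (ii) the key step you leave to ``obstacle (b)'' is handled in the paper by a specific dichotomy: after arranging $Y\to V$ to be small (via the same klt trick) and observing that both $K_Y+B_Y+(t-\epsilon)C_Y$ and $K_Y+B_Y+(t-\epsilon')C_Y$ are nef for some $\epsilon'>\epsilon$, one gets $C_Y\equiv 0\equiv N_Y$ over $V$; hence any one-dimensional connected component $\Sigma$ of the exceptional locus satisfies either $\Sigma\subset\Supp N_Y=\rddown{B_Y}$ or $\Sigma\cap\Supp N_Y=\emptyset$, and in the latter case $K_Y+B_Y+(t-\epsilon)C_Y$ is locally a multiple of the semi-ample $K_Y+B_Y+tC_Y$. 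This is the substance needed to feed into Theorem~\ref{t-sa-reduced-boundary-2}, and it does not reduce to a surface computation as you suggest.
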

\begin{proof}
We can assume that $C\neq 0$.
If we let $\Delta=B-\delta(N+C)$ for some small rational number $\delta>0$, 
then $(X,\Delta)$ is klt and $K_X+B$ is a positive multiple of $K_X+\Delta$ up to 
$\Q$-linear equivalence. 
Similarly, for any $s\in (0,1]$, there is $s'\in (0,s)$ such that $(X,\Delta+s'C)$ is klt 
and $K_X+B+sC$ is a positive multiple of $K_X+\Delta+s'C$ up to 
$\Q$-linear equivalence.  So if $(Y,\Delta_Y+s'C_Y)$ is a log minimal model of 
$(X,\Delta+s'C)$, which exists by Theorem \ref{t-mmodel}, then $(Y,B_Y+sC_Y)$ is a $\Q$-factorial 
weak lc model of $(X,B+sC)$ such that $Y\bir X$ 
does not contract divisors and $X\bir Y$ is $K_X+B+sC$-negative (see \ref{ss-divisors} for 
this notion). 
We will make use of this observation below.
 
Let $T$ be the lc model of $(X,B+tC)$ and let $(Y,B_Y+tC_Y)$ be a $\Q$-factorial weak lc model of $(X,B+tC)$ 
such that $X\bir Y$ is $K_X+B+tC$-negative and its inverse does not contract divisors. 
Then the induced map $Y\bir T$ is a morphism and $K_T+B_T+tC_T$ pulls back to 
$K_Y+B_Y+tC_Y$. 

First assume that $t$ is irrational. Then $C_Y\equiv 0/T$. Moreover, 
$C_T$ is $\Q$-Cartier because the set of those $s\in\R$ such that $K_T+B_T+sC_T$ is $\R$-Cartier 
forms a rational affine subspace of $\R$ (this can be proved using simple 
linear algebra similar to \ref{ss-ext-rays-scaling}). Since $t$ belongs to this affine subspace and $t$ is 
not rational, the affine subspace is equal to $\R$ hence $K_T+B_T+sC_T$ is $\R$-Cartier for 
every $s$ which implies that $C_T$ is $\Q$-Cartier. Thus $C_Y\sim_\Q 0/T$ hence  
$K_T+B_T+(t-\epsilon) C_T$ pulls back to $K_Y+B_Y+(t-\epsilon)C_Y$ and the former is ample  
for every sufficiently small $\epsilon>0$. This means that $T$ is also the lc model 
of $(X,B+(t-\epsilon)C)$. 

From now on we assume that $t$ is rational. Replace $Y$ with a $\Q$-factorial weak lc model 
of $(Y,B_Y+(t-\epsilon)C_Y)$ over $T$ so that $X\bir Y$ is still $K_X+B+(t-\epsilon)C$-negative. 
 Since $K_T+B_T+tC_T$ is ample, 
by choosing $\epsilon$ to be small enough, we can assume that 
$K_Y+B_Y+(t-\epsilon)C_Y$ is nef globally, by \ref{ss-ext-rays-II}. Then $(Y,B_Y+(t-\epsilon)C_Y)$ 
is a weak lc model of $(X,B+(t-\epsilon)C)$ hence it is enough to show that 
$K_Y+B_Y+(t-\epsilon)C_Y$ is semi-ample. 
Perhaps after replacing $\epsilon$ with a smaller number we can assume that 
$K_Y+B_Y+(t-\epsilon')C_Y$ also nef globally for some $\epsilon'>\epsilon$ and that $t-\epsilon$ is rational. 

Let $Y\to V$ be the 
contraction to an algebraic space associated to $K_Y+B_Y+(t-\epsilon)C_Y$. 
Any curve contracted by $Y\to V$ is also contracted by $Y\to T$ because $K_Y+B_Y+tC_Y$ 
and $K_Y+B_Y+(t-\epsilon')C_Y$ are both nef and $\epsilon'>\epsilon$.  Thus we get an induced map $V\to T$. 
Moreover, there is a small contraction $Y'\to V$ from a $\Q$-factorial normal projective 
variety $Y'$: recall that $(Y,\Lambda_Y:=\Delta_Y+t'C_Y)$ is klt where $\Delta$ and $t'$ are 
as in the first paragraph; now $Y'$ can be obtained by taking a log resolution $W\to Y$, defining 
$\Lambda_W$ to be the birational transform of $\Lambda_V$ plus the reduced exceptional 
divisor of $W\to V$, running an LMMP$/V$ on $K_W+\Lambda_W$, using special termination 
and the fact that $K_W+\Lambda_W\equiv E/V$ for some $E\ge 0$ whose support is 
equal to the reduced exceptional divisor of $W\to V$, and applying the negativity lemma (\ref{ss-negativity}).
Since $K_Y+B_Y+(t-\epsilon)C_Y\equiv 0/V$, $K_{Y'}+B_{Y'}+(t-\epsilon)C_{Y'})$ is also nef and 
the former is semi-ample if and only if the latter is. So by replacing $Y$ with  
$Y'$, we can in addition assume that $Y\to V$ is a small contraction.

Let $\Sigma$ be a connected component of the exceptional set of $Y\to V$. Since 
$Y\to V$ is a small morphism, $\Sigma$ is one-dimensional. On the other hand, 
since 
$$
K_{Y}+B_Y+(t-\epsilon)C_Y\equiv 0/V
$$ 
and 
$$
K_{Y}+B_Y+tC_Y\equiv 0/V
$$ 
we get $C_Y\equiv 0/V$ hence $N_Y\equiv 0/V$.
Therefore either $\Sigma\subset \Supp N_Y$ or $\Sigma\cap \Supp N_Y=\emptyset$.
Moreover, if $\Sigma\cap \Supp N_Y=\emptyset$, then $(K_Y+B_Y+(t-\epsilon)C_Y)|_\Sigma$ 
is semi-ample because near $\Sigma$ the divisor  
$K_Y+B_Y+(t-\epsilon)C_Y$ is a multiple of $K_Y+B_Y+tC_Y$ 
and the latter is semi-ample. 

We can assume that $ A'$ in (6) has small coefficients. Let $B'=B+(t-\epsilon)C-A'$. Since $(Y,B_Y'+A_Y'+\epsilon C_Y)$ 
is lc, $\Supp C_Y$ (hence also $\Supp A'_Y$) does not contain any lc centre of $(Y,B_Y'+A_Y')$.
Now applying Theorem \ref{t-sa-reduced-boundary-2} to 
$(X,B'+A')$ shows that  
$K_Y+B_Y+(t-\epsilon)C_Y$ is semi-ample (note that the exceptional locus of $Y\to V$ is 
equal to $\mathbb{E}(K_Y+B_Y'+A_Y')$). Therefore, $K_Y+B_Y+sC_Y$ is semi-ample for every $s\in[t-\epsilon,t]$.\\
\end{proof}

\begin{prop}\label{p-fg-1-4}
Let $(X,B)$ be a pair and $M$ a $\Q$-divisor satisfying properties $(1)$ to $(4)$ 
of Lemma \ref{l-fg-decrease}. Then the lc ring $R(K_X+B)$ is finitely generated.
\end{prop}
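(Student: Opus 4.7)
The strategy is to verify the hypotheses of Lemma~\ref{l-fg-decrease}, use it iteratively to decrease the parameter $t$ from $1$ to $0$, and thereby produce an lc model of $(X,B)$; finite generation of $R(K_X+B)$ then follows since the log canonical divisor of the lc model is ample.

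First I will arrange conditions (5) and (6). Using $\Q$-linear equivalence within the class of $M$, replace $A$ by a general effective ample $\Q$-divisor whose support is disjoint from $\rddown{B}$, absorbing the necessary components of $\rddown{B}$ into $D$ (possible since $\rddown{B}\subseteq \Supp M$). Decompose $D=D_1+D_2$ with $\Supp D_1=\rddown{B}$ and $\Supp D_2$ disjoint from $\rddown{B}$. For a small rational $\alpha>0$ set $N:=\alpha D_1$ and $C:=\alpha A+\alpha D_2$; then $\alpha M=N+C$, $\Supp N=\rddown{B}$, and $(X,B+C)$ is dlt because $(X,\Supp(B+M))$ is log smooth and the new coefficients are close to those of $B$. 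Taking $A':=\beta A$ for any $0<\beta\le\alpha$ gives an ample $\Q$-divisor with $A'\le A$ and $A'\le C$, so (5) and (6) hold.

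Next I establish the base case: $(X,B+C)$ has an lc model. The equivalence $K_X+B+C\sim_\Q (1+\alpha)A+(D+\alpha D_2)$ is a weak Zariski decomposition with nef (in fact ample) part $(1+\alpha)A$, so Proposition~\ref{p-WZD} yields a log minimal model $(Y,(B+C)_Y)$. To upgrade it to an lc model I will apply Theorem~\ref{t-sa-reduced-boundary-2} with the splitting $B+C=(B+\alpha D_2)+\alpha A$ and ample part $\alpha A$. The required conditions (no divisors extracted by $Y\bir X$; $\Supp(\alpha A)_Y$ avoids the lc centres of $(Y,(B+C)_Y)$; and on each connected component $\Sigma$ of $\mathbb{E}(K_Y+(B+C)_Y)$ not contained in $\rddown{(B+C)_Y}$ the restriction is semi-ample) are verified using the generality of $A$ and the presence of the ample part in the WZD; this yields semi-ampleness of $K_Y+(B+C)_Y$, hence the lc model.

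Finally I iterate. Set $T:=\{t\in[0,1]:(X,B+tC)\text{ has an lc model}\}$. By the base case $1\in T$, and Lemma~\ref{l-fg-decrease} shows $T$ is open to the left at every point of $T\cap(0,1]$. Let $\tau:=\inf T$. Choosing $t_n\in T$ with $t_n\searrow\tau$ and corresponding lc models $Y_n$, a finiteness-of-models argument along the segment $\{B+tC:t\in[0,1]\}$ forces $Y_n=Y$ to be eventually constant; then $K_Y+B_Y+\tau C_Y$ is a limit of ample classes, hence nef, and a further application of Theorem~\ref{t-sa-reduced-boundary-2} gives semi-ampleness, so $\tau\in T$. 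If $\tau>0$ then Lemma~\ref{l-fg-decrease} produces a point of $T$ strictly below $\tau$, contradicting the definition of $\tau$; thus $\tau=0$ and $(X,B)$ has an lc model $(Y_0,B_{Y_0})$ with $K_{Y_0}+B_{Y_0}$ ample, so $R(K_X+B)\cong R(K_{Y_0}+B_{Y_0})$ is finitely generated. The main obstacle is the finiteness-of-models step used to drive $\tau$ to $0$: in characteristic zero this is standard Shokurov/BCHM finiteness, whereas in characteristic $p>5$ it must be derived from the MMP machinery of the earlier sections (log flips, log minimal models, LMMP with scaling) without appealing to vanishing theorems.
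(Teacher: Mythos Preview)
Your overall architecture (arrange (5)--(6), show $1\in\mathcal T$, then push $\tau$ down to $0$ via Lemma~\ref{l-fg-decrease}) matches the paper's, but two of the three steps have real gaps.

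\textbf{Base case.} You claim that once Proposition~\ref{p-WZD} gives a log minimal model $(Y,(B+C)_Y)$, Theorem~\ref{t-sa-reduced-boundary-2} upgrades it to an lc model. The hard hypothesis in that theorem is the one on connected components $\Sigma$ of $\mathbb{E}(K_Y+(B+C)_Y)$ not contained in $\rddown{(B+C)_Y}$: you must already know $(K_Y+(B+C)_Y)|_\Sigma$ is semi-ample. In the paper this is always checked by comparing with a \emph{previously known} semi-ample divisor (the lc model at a nearby parameter), and your base case has no such divisor to borrow from. ``Generality of $A$ and the ample part in the WZD'' does not supply this. The paper avoids the problem entirely: it first replaces $A$ by $\tfrac{1}{m}S$ so that $\Supp A=S$ is irreducible with the extra property (7), and then does a finite descent on $\theta(X,B,M)$ (the number of components of $M$ outside $\rddown{B}$). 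Each step replaces $B$ by $B'=B+C$ with strictly smaller $\theta$; by (7) the process terminates at a stage where $K_X+B'$ is ample, so $R(K_X+B')$ is finitely generated and $1\in\mathcal T$ is established without ever needing the semi-ampleness theorem at the top.

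\textbf{The limit $\tau$.} Your ``finiteness-of-models'' step is not available here; nothing in the paper proves finiteness of weak lc models along a segment in char $p$. The paper's substitute is quite different: pass to a subsequence so the $Y_i$ are isomorphic in codimension one, then run the LMMP on $K_{Y_1}+B_{Y_1}+\tau C_{Y_1}$ with scaling of $(t_1-\tau)C_{Y_1}$. The point of the relation $\alpha M=N+C$ with $\Supp N=\rddown{B}$ is that this LMMP is simultaneously an LMMP on $N_{Y_1}$, so every ray is a pl-extremal ray; hence projective contractions exist, flips exist, and special termination finishes it on some $Y$ with $K_Y+B_Y+\tau C_Y$ nef. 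Showing $\tau\in\mathcal T$ then still requires nontrivial work (Steps~4--5): one first arranges $Y\to T$ to be small, and verifies the $\Sigma$-hypothesis of Theorem~\ref{t-sa-reduced-boundary-2} by using that near any $\Sigma\not\subseteq\Supp N_Y$ the divisor $K_Y+B_Y+\tau C_Y$ is locally a multiple of the already semi-ample $K_Y+B_Y+t_iC_Y$. Your sentence ``a further application of Theorem~\ref{t-sa-reduced-boundary-2} gives semi-ampleness'' hides exactly this, and it again needs the prior lc model at $t_i$ as input.

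In short: your decomposition for (5)--(6) is fine, but you need the paper's $\theta$-descent plus property~(7) to get the base case, and the pl-LMMP-with-scaling argument (not finiteness of models) to reach and handle $\tau$.
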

\begin{proof}
\emph{Step 1.} 
We follow the proof of [\ref{B-mmodel}, Proposition 3.4], which is similar to 
[\ref{BCHM}, \S 5], but with some twists. 
Assume that $R(K_X+B)$ is not finitely generated. 
We will derive a contradiction. 
By replacing $A$ with $\frac{1}{m}S$ where $m$ is sufficiently divisible and $S$ is a 
general member of $|mA|$, and changing $M,B$ accordingly, we can assume that 

$(7)$ $S:=\Supp A$ is irreducible and $K_X+S+\Delta$ is ample for any boundary $\Delta$ 
supported on $\Supp(B)-S$.\\ 

Let ${\theta}(X,B,M)$ be the number of those components of $M$ which are not components 
of $\rddown{B}$ (such $\theta$ functions were defined in \ref{ss-WZD} in a more general setting). 
By (7),  $S$ is not a component of $\rddown{B}$, hence ${\theta}(X,B,M)>0$ 
otherwise $K_X+B$ is ample and $R(K_X+B)$ is finitely generated, 
a contradiction. 
Define
$$
\alpha:=\min\{t>0~|~~\rddown{(B+tM)^{\le 1}}\neq \rddown{B}~\}
$$
where for a divisor $R=\sum r_iR_i$ we define $R^{\le 1}=\sum r_i'R_i$ with $r_i'=\min\{r_i,1\}$.
 In particular, $(B+\alpha M)^{\le 1}=B+C$ for some $C\ge 0$ 
supported in $\Supp M$, and $\alpha M=C+N$ 
where $N\ge 0$ is supported in $\rddown{B}$ and $C$ has no common components with $\rddown{B}$. 

Property (3) ensures that $\Supp N=\rddown{B}$, and by property (7) we have $\alpha A\le C$. 
So $(X,B)$ and $M$ also satisfy properties (5) and (6) of \ref{l-fg-decrease} with $A'=\alpha' A$ 
for some $\alpha'>0$.

\emph{Step 2.}
Let $B':=B+C$ and let $M':=M+C$.  
Then the pair $(X,B')$ is log smooth dlt and   
$$
{\theta}(X,B',M')<{\theta}(X,B,M)
$$ 
Assume that $R(K_X+B')$ is not finitely generated. By (7), $S$ is not a 
component of $\rddown{B'}$ and ${\theta}(X,B',M')>0$.
Now replace $(X,B)$ with $(X,B')$, replace $D$ with $D':=D+C$, 
and replace $M$ with $M'$. By construction, all the properties (1) to (4) of \ref{l-fg-decrease} 
and property (7) above are still satisfied. 
Repeating the above process we get to the situation in which either 
$R(K_X+B')$ is finitely generated, or ${\theta}(X,B',M')=0$ and $K_X+B'$ is ample. 
 Thus in any case we can assume $R(K_X+B')$ is finitely generated.

\emph{Step 3.} Let
$$
\mathcal T=\{t\in [0,1]~|~ (X,B+tC)~~\mbox{has an lc model}\}
$$
Since $R(K_X+B'=K_X+B+C)$ is finitely generated, 
$1\in\mathcal T$ hence $\mathcal T\neq \emptyset$. Moreover, if $t\in\mathcal T\cap (0,1]$,  
then by Lemma \ref{l-fg-decrease}, $[t-\epsilon,t]\subset \mathcal{T}$ for some $\epsilon>0$.  
Now let $\tau=\inf \mathcal{T}$. If $\tau\in \mathcal{T}$, then $\tau=0$ which implies that 
$R(K_X+B)$ is finitely generated, a 
contradiction. So we may assume $\tau\notin \mathcal{T}$. There is a sequence 
$t_1>t_2>\cdots$ of rational numbers in $\mathcal{T}$ approaching $\tau$.
For each $i$, there is a $\Q$-factorial weak lc model $(Y_i,B_{Y_i}+t_iC_{Y_i})$ 
of $(X,B+t_iC)$ such that $Y_i\bir X$ does not 
contract divisors (see the beginning of the proof of Lemma \ref{l-fg-decrease}). 
By taking a subsequence, we can assume that 
all the $Y_i$ are isomorphic in codimension one. In particular, 
$N_\sigma(K_{Y_1}+B_{Y_1}+\tau C_{Y_1})=0$.

Arguing as in the proof of Theorem \ref{t-sa-reduced-boundary-2}, we can show that 
$({Y_1},B_{Y_1}+\tau C_{Y_1})$ is dlt because $\alpha A\le C$ is ample and $\Supp A_{Y_1}$ 
does not contain any lc centre of $({Y_1},B_{Y_1}+\tau C_{Y_1})$.
Run the LMMP on $K_{Y_1}+B_{Y_1}+\tau C_{Y_1}$ with scaling of $(t_1-\tau)C_{Y_1}$ as in \ref{ss-g-LMMP-scaling}. 
Since $\alpha M_{Y_1}=N_{Y_1}+C_{Y_1}$, the LMMP is also an LMMP on $N_{Y_1}$. 
Thus each extremal ray in the process is a pl-extremal ray hence they can 
be contracted by projective morphisms (\ref{ss-pl-ext-rays}). Moreover, 
the required flips exist by Theorem \ref{t-flip-1}, and the LMMP terminates 
with a model $Y$ on which $K_{Y}+B_{Y}+\tau C_{Y}$ is nef, by special termination (\ref{p-st}). 
Note that the LMMP does not contract any divisor by the $N_\sigma=0$ property. 
Moreover, $K_{Y}+B_{Y}+(\tau+\delta) C_{Y}$ is nef for some $\delta>0$. Now, by replacing the sequence 
we can assume that $K_{Y}+B_{Y}+t_i C_{Y}$ is nef for every $i$ and by replacing each $Y_i$ with $Y$  
we can assume that $Y_i=Y$ for every $i$.  
A simple comparison of discrepancies (cf. [\ref{B-mmodel}, Claim 3.5]) shows 
that $(Y,B_{Y}+\tau C_{Y})$ is a $\Q$-factorial 
weak lc model of $(X,B+\tau C)$.
 
\emph{Step 4.}  Let $T_i$ be the lc model of 
$(X,B+t_iC)$. Then the map $Y\bir T_i$ is a morphism and $K_{Y}+B_{Y}+t_i C_{Y}$ is the 
pullback of an ample divisor on $T_i$.
 Moreover, for each $i$, the map $T_{i+1}\bir T_i$ is a morphism because any curve 
 contracted by $Y\to T_{i+1}$ is also contracted by $Y\to T_i$. So perhaps after replacing the sequence, 
we can assume that $T_i$ is independent of $i$ so we can drop the subscript and simply 
use $T$. Since $C\sim_\Q 0/T$, we can replace $Y$ with a $\Q$-factorialization of $T$ 
so that we can assume that $Y\to T$ is a small morphism (such a $\Q$-factorialization exists 
by the observations in the first paragraph of the proof of Lemma \ref{l-fg-decrease}).

Assume that $\tau$ is irrational. 
If $K_Y+B_Y+(\tau-\epsilon)C_Y$ is nef for 
some $\epsilon>0$, then $K_Y+B_Y+\tau C_Y$ is semi-ample because in this case 
$K_T+B_T+(\tau-\epsilon)C_T$ is nef and $K_T+B_T+t_i C_T$ is ample hence 
$K_T+B_T+\tau C_T$ is ample. If there is no $\epsilon$ as above, then 
by \ref{ss-ext-rays-scaling} and \ref{ss-ext-rays-II}, there is a curve 
$\Gamma$ generating some extremal ray such that $(K_Y+B_Y+\tau C_Y)\cdot \Gamma=0$ 
and $C_Y\cdot \Gamma>0$. 
This is not possible since $\tau$ is assumed to be irrarional. 
 So from now on we assume that 
$\tau$ is rational.

\emph{Step 5.} Let $Y\to V$ be the contraction to an algebraic space associated to $K_Y+B_Y+\tau C_Y$. 
This map factors through $Y\to T$ so we get an induced map $T\to V$.  
We can write 
$$
K_T+B_T+\tau C_T=a(K_T+B_T+t_iC_T)+bN_T
$$
for some $i$ and some rational numbers $a,b>0$. Since $K_T+B_T+t_iC_T$ is ample, we get 
$$
\mathbb{E}(K_T+B_T+\tau C_T)\subset \Supp N_T=\rddown{B_T}
$$
Thus since $N_Y\sim_\Q 0\sim_\Q C_Y/T$, the locus 
$\mathbb{E}(K_Y+B_Y+\tau C_Y)$ is a subset of the union of $\Supp N_Y=\rddown{B_Y}$ and the exceptional 
set of $Y\to T$.
 Let $\Lambda$ be a connected component of the exceptional 
set of $Y\to T$. Then, since $N_Y\sim_\Q 0/T$ and since $\Lambda$ is one-dimensional, 
either $\Lambda\subset \Supp N_{Y}$ or $\Lambda\cap \Supp N_{Y}=\emptyset$.
Therefore if $\Sigma$ is a connected component of $\mathbb{E}(K_Y+B_Y+\tau C_Y)$, then 
either $\Sigma\subset \Supp N_{Y}$ or $\Sigma\cap \Supp N_{Y}=\emptyset$.
In the latter case,  $(K_Y+B_Y+\tau C_Y)|_\Sigma$ is semi-ample because near $\Sigma$ the divisor  
$K_Y+B_Y+\tau C_Y$ is a multiple of $K_Y+B_Y+t_i C_Y$ and the latter is semi-ample. 
Finally as in the end of the proof of Lemma \ref{l-fg-decrease} we can apply Theorem 
\ref{t-sa-reduced-boundary-2} to show that $K_Y+B_Y+\tau C_Y$ is semi-ample. 
This is a contradiction because we assumed $\tau\notin\mathcal{T}$.\\
\end{proof}

\begin{proof}(of Theorem \ref{t-fg})
First assume that $Z$ is a point.
Pick $M\ge 0$ such that $K_X+B\sim_\Q M$. We can choose $M$ so that 
$M=A+D$ where $A\ge 0$ is ample and $D\ge 0$.  
 Let $f\colon W\to X$ be a log resolution 
of $(X,\Supp (B+M))$. Since $(X,B)$ is klt, we can write 
$$
K_W+B_W=f^*(K_X+B)+E
$$
where $(W,B_W)$ is klt, $K_W+B_W$ is a $\Q$-divisor, and $E\ge 0$ is exceptional$/X$. 
 Moreover, there is $E'\ge 0$ exceptional$/X$ 
such that $-E'$ is ample$/X$ (cf. proof of Lemma \ref{l-ample-dlt}). Let $A_W\sim_\Q f^*A-E'$ 
be general and let $D_W=f^*D+E+E'$. Then 
$$
K_W+B_W\sim_\Q M_W:=A_W+D_W
$$ 
Now replace $(X,B)$ with $(W,B_W)$, 
replace $M$ with $M_W$, and replace $A$ and $D$ with $A_W$ and $D_W$. 
Moreover, by adding a small multiple of $M$ to $B$ we can also assume that 
$\Supp M\subseteq \Supp B$.
Then $(X,B)$ and $M$ satisfy the properties (1) to (4) of Lemma \ref{l-fg-decrease}. 
Therefore, by Proposition \ref{p-fg-1-4}, $R(K_X+B)$ is finitely generated. 

Now we treat the general case, that is, when $Z$ is not necessarily a point.
By taking projectivizations of $X,Z$ and taking a log resolution, we 
may assume that $X,Z$ are projective and that $(X,B)$ is log smooth. We can also assume that 
$K_X+B\sim_\Q M=A+D/Z$ where $A$ is an ample $\Q$-divisor and $D\ge 0$. By adding 
some multiple of $M$ to $B$ we may assume $\Supp M\subseteq \Supp B$. Let $(Y,B_Y)$ be a log minimal model 
of $(X,B)$ over $Z$. Let $H$ be the pullback of an 
ample divisor on $Z$. Since $A\le B$, for each integer $m\ge 0$, there is $\Delta$ such that 
$K_X+B+mH\sim_\Q K_X+\Delta$ is big globally and that $(X,\Delta)$ is klt. Moreover, $(Y,\Delta_Y)$ is a 
log minimal model of $(X,\Delta)$ over $Z$. Now 
by \ref{ss-ext-rays-II}, if $m\gg 0$, then $K_Y+\Delta_Y$ 
is big and globally nef. On the other hand, $R(K_Y+\Delta_Y)$ is finitely generated over $k$ 
which means that $K_Y+\Delta_Y$ is semi-ample. Therefore $K_Y+B_Y$ is semi-ample$/Z$ 
hence $\mathcal{R}(K_X+B/Z)$ is a finitely generated $\mathcal{O}_Z$-algebra.
 
\end{proof}

\subsection{Base point freeness} 
\begin{proof}(of Theorem \ref{t-bpf})
It is enough to show that $\mathcal{R}(D/Z)$ is a finitely generated $\mathcal{O}_Z$-algebra. 
By taking a $\Q$-factorialization using Theorem \ref{cor-dlt-model}, we may assume that 
$X$ is $\Q$-factorial.  
Let $A=D-(K_X+B)$ which is nef and big$/Z$ by assumptions. By replacing $A$, and replacing $B$ accordingly, 
we may assume that $A$ is ample globally. 
By Lemma \ref{l-ample-dlt}, we can change $A$ up to $\Q$-linear equivalence so that 
$(X,B+A)$ is klt. But then $\mathcal{R}(K_X+B+A/Z)$ is finitely generated 
by Theorem \ref{t-fg} hence $\mathcal{R}(D/Z)$ is also finitely generated.
\end{proof}

\subsection{Contractions}
\begin{proof}(of Theorem \ref{t-contraction}) 
We may assume that $B$ is a $\Q$-divisor and that $(X,B)$ is klt. We can assume $N=H+D$ where $H$ is ample$/Z$ 
and $D\ge 0$. Let $G$ be the pullback of an ample divior on $Z$, and let  
 $N'=mG+nN+\epsilon H+\epsilon D$  where $\epsilon>0$ is sufficiently small  
and $m\gg n\gg 0$. Then we can 
find $A\sim_\Q N'$ such that $(X,B+A)$ is klt, $K_X+B+A$ is globally big, and $(K_X+B+A)\cdot R<0$. 
 By \ref{ss-ext-rays-II}, we can find an ample divisor $E$ such that $L:=(K_X+B+A+E)$ 
 is nef and big globally and $L^\perp=R$. We can also assume that $(X,B+A+E)$ is klt 
 hence by Theorem \ref{t-bpf}, $L$ is semi-ample which implies that $R$ can be contracted by a 
projective morphism.\\
\end{proof}

\section{ACC for lc thresholds}\label{s-ACC}

In this section, we prove Theorem \ref{t-ACC} by a method similar to the characteristic $0$ case 
(see [\ref{Kollar+}, Chapter 18] and [\ref{MP}]).
Let us recall the definition of \emph{lc threshold}. Let $(X,B)$ be an lc pair over $k$ and 
$M\ge 0$ an $\R$-Cartier divisor. The lc threshold of $M$ with respect to $(X,B)$ is defined as 
$$
\lct(M,X,B)=\sup \{t\mid (X,B+tM)~~\mbox{is lc}\}
$$

We first prove some results, including ACC for lc thresholds, for surfaces 
before we move on to 3-folds.

\subsection{ACC for lc thresholds on surfaces}

\begin{prop}\label{p-acc-surfaces}
ACC for lc thresholds holds in dimension $2$ (formulated similar to \ref{t-ACC}).
\end{prop}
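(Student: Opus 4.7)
The plan is to argue by contradiction: suppose there is a strictly increasing sequence of lc thresholds
$$
t_i := \lct(M_i,X_i,B_i),
$$
where each $(X_i,B_i)$ is an lc surface with coefficients in $\Lambda$, and $M_i\ge 0$ with coefficients in $\Gamma$, and derive a contradiction from the adjunction formula and the DCC hypothesis.

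First I would extract an lc place computing each threshold. By definition, $(X_i,B_i+t_iM_i)$ is lc but there exists a prime divisor $E_i$ on some birational model of $X_i$ with $a(E_i,X_i,B_i+t_iM_i)=0$. Using resolution of singularities in dimension two together with a suitable KMMP (or directly the existence of $\Q$-factorial dlt models as in Lemma \ref{l-extraction}, whose surface version is standard), I would construct a $\Q$-factorial dlt model $f_i\colon Y_i\to X_i$ on which $E_i$ is a prime divisor appearing in $B_{Y_i}+t_iM_{Y_i}$ with coefficient $1$, where $K_{Y_i}+B_{Y_i}+t_iM_{Y_i}=f_i^*(K_{X_i}+B_i+t_iM_i)$. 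After shrinking around the generic point of $E_i$ and passing to subsequences, I may assume the combinatorial data stabilizes.

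Next I would apply adjunction along $E_i$. By Propositions \ref{p-adjunction-existence} and \ref{p-adjunction-DCC}, on the normalization $E_i^\nu$ we obtain
$$
K_{E_i^\nu}+B_{E_i^\nu}\sim_{\R}(K_{Y_i}+B_{Y_i}+t_iM_{Y_i})|_{E_i^\nu},
$$
an lc pair on a curve whose different $B_{E_i^\nu}$ has coefficients in the hyperstandard set built from the coefficients of $B_{Y_i}+t_iM_{Y_i}$, which lie in $\Lambda\cup t_i\Gamma$. Each coefficient has the form $\frac{m-1}{m}+\sum \frac{l_jc_j}{m}$ with $m\in\mathbb{N}\cup\{\infty\}$ and $l_j\in\mathbb{Z}^{\ge 0}$ depending only on the local geometry, and with $c_j\in\Lambda\cup t_i\Gamma$. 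After passing to a further subsequence I may assume the integers $m$ and $l_j$ (at the relevant points) are constant in $i$.

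Finally I would split into two cases. In the first case, $E_i$ is not a minimal lc centre, so there is a closed point $P_i\in E_i^\nu$ whose coefficient in $B_{E_i^\nu}$ is exactly $1$; this produces the rigid equation $\sum l_jc_j = 1$. Substituting $c_j\in\Lambda\cup t_i\Gamma$ and using strict increase of $t_i$ against any $c_j$ of the form $t_i\gamma$ forces the remaining $c_j\in\Lambda$ to strictly decrease, contradicting DCC of $\Lambda$. In the second case $(E_i^\nu,B_{E_i^\nu})$ is klt, and then $-(K_{E_i^\nu}+B_{E_i^\nu}-P_i^{\rm near})$ remains nef for a suitable boundary perturbation, so degree considerations on the curve $E_i^\nu$ (using $\deg(K_{E_i^\nu}+B_{E_i^\nu})\le 0$) together with the hyperstandard DCC description again force finiteness of the possible coefficients, contradicting strict monotonicity of $t_i$. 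The main obstacle will be the second case: one must rule out sequences of klt curve pairs $(E_i^\nu,B_{E_i^\nu})$ whose coefficients approach $1$ from below without any single coefficient equaling $1$; this is handled by exploiting that the denominators $m$ are bounded along the subsequence (again by DCC applied to the discrepancy equations defining $m$), so that the coefficients actually live in an ACC-violating arithmetic progression inside $[0,1]$.
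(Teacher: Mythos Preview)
Your overall strategy matches the paper's: extract an lc place, apply adjunction to reduce to a curve, and exploit the DCC description of the different. But your case split and your handling of the second case miss the key numerical fact that makes the argument work.

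The paper first disposes of the case where $(X_i,\Delta_i:=B_i+t_iM_i)$ has a one--dimensional lc centre inside $\Supp M_i$. In that case some component of $M_i$ (say with coefficient $m_i\in\Gamma$) appears in $\Delta_i$ with coefficient $1$, so $b_i+t_im_i=1$ for some $b_i\in\Lambda\cup\{0\}$; this is an immediate contradiction to $t_i$ strictly increasing with $b_i,m_i$ in DCC sets. You skip this reduction, and it matters: without it, the divisor $E_i$ you extract could already live on $X_i$, and then there is no reason at all for $\deg(K_{E_i^\nu}+B_{E_i^\nu})\le 0$. Your Case~2 assertion of that inequality is simply unjustified in that situation.

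Once the one--dimensional case is excluded, the lc centre $P_i\in\Supp M_i$ is a closed point, and the extracted $E_i$ is \emph{exceptional} over $P_i$. This is the point you are missing: since $K_{Y_i}+\Delta_{Y_i}=f_i^*(K_{X_i}+\Delta_i)$ is numerically trivial on every curve contracted by $f_i$, we get $(K_{Y_i}+\Delta_{Y_i})\cdot E_i=0$, hence $E_i\simeq\PP^1_k$ and $\deg\Delta_{E_i}=2$ \emph{exactly}. With this exact equality, there is no need for your klt/non--klt dichotomy on the curve: by Proposition~\ref{p-adjunction-DCC} the coefficients of all the $\Delta_{E_i}$ lie in a single DCC set (built from $\Lambda$ and the increasing sequence $t_i\Gamma$), and because $E_i$ meets the birational transform of $M_i$, at least one such coefficient genuinely depends on $t_i$ and therefore the collection fails ACC. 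A DCC set of nonnegative reals whose members at each stage sum to the fixed value $2$ but which fails ACC is impossible; that is the whole contradiction. Your ``main obstacle'' paragraph about bounding denominators $m$ and producing ACC--violating arithmetic progressions is unnecessary once you have the equality $\deg\Delta_{E_i}=2$.
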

\begin{proof}
If this is not the case, then there is a sequence $(X_i,B_i)$ of lc pairs of 
dimension $2$ over $k$ and $\R$-Cartier divisors $M_i\ge 0$ such that the coefficients of 
$B_i$ are in $\Lambda$, the coefficients of $M_i$ are in $\Gamma$ but such that 
the $t_i:=\lct(M_i,X_i,B_i)$ form a strictly increasing sequence of numbers.
If for infinitely many $i$, $(X_i,\Delta_i:=B_i+t_iM_i)$ has an lc centre of dimension one 
contained in $\Supp M_i$, 
then it is quite easy to get a contradiction. 
We may then assume that each $(X_i,\Delta_i)$ has an lc centre $P_i$ of dimension zero 
contained in $\Supp M_i$. We may also assume that $(X_i,\Delta_i)$ is plt outside $P_i$. 
Let $(Y_i,\Delta_{Y_i})$ be a $\Q$-factorial dlt model of 
$(X_i,\Delta_i)$ such that there are some exceptional divisors on $Y_i$ 
mapping to $P_i$. Such $Y_i$ exist by a version of Lemma \ref{l-extraction} in dimension $2$. 

There is a prime exceptional divisor $E_i$ of $Y_i\to X_i$ such that it intersects the 
birational transform of $M_i$. Note that $E_i$ is normal and actually isomorphic to $\PP^1_k$ 
since $E_i$ is a component of $\rddown{\Delta_{Y_i}}$ and $(K_{Y_i}+\Delta_{Y_i})\cdot E_i=0$. Now by adjunction define 
$K_{E_i}+\Delta_{E_i}=(K_{Y_i}+\Delta_{Y_i})|_{E_i}$. Then by Proposition \ref{p-adjunction-DCC} and 
its proof, the set of all the coefficients of the 
$\Delta_{E_i}$ is a subset of a fixed DCC set but they do not satisfy ACC. 
This is a contradiction since $\deg \Delta_{E_i}=2$.
\end{proof}

We apply the ACC of \ref{p-acc-surfaces} to negativity of contractions.

\begin{lem}\label{l-lim-nefness}
Let $\Lambda\subset [0,1]$ be a DCC set of real numbers. Then there is $\epsilon>0$ satisfying 
the following: assume we have 

$\bullet$ a klt pair $(X,B)$ of dimension $2$,

$\bullet$ the coefficients of $B$ belong to $\Lambda\cup [1-\epsilon,1]$, 

$\bullet$  $f\colon X\to Y$ is an extremal birational projective contraction with exceptional divisor $E$, 

$\bullet$ the coefficient of $E$ in $B$ belongs to $[1-\epsilon,1]$, and 

$\bullet$ $-(K_X+B)$ is nef$/Y$.\\ 

If $\Delta$ is obtained from $B$ by replacing each coefficient in $[1-\epsilon, 1]$ with $1$, 
then  $-(K_X+\Delta)$ is also nef$/Y$. 
\end{lem}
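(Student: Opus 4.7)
The plan is to argue by contradiction using ACC for lc thresholds on surfaces (Proposition \ref{p-acc-surfaces}) together with adjunction. Suppose the lemma fails; then there exist $\epsilon_n\downarrow 0$ and counterexamples $(X_n,B_n,f_n\colon X_n\to Y_n,E_n)$ satisfying the hypotheses with $\epsilon=\epsilon_n$, but with $(K_{X_n}+\Delta_n)\cdot E_n>0$, where $\Delta_n$ is obtained from $B_n$ by promoting every coefficient in $[1-\epsilon_n,1]$ to $1$. Decompose $B_n=C_n+\Theta_n$, where $C_n$ collects the components of $B_n$ with coefficients in $\Lambda$ and $\Theta_n=\sum_id_{n,i}D_{n,i}$ collects those with $d_{n,i}\in[1-\epsilon_n,1]$; put $T_n=\sum_iD_{n,i}$, so that $\Delta_n=C_n+T_n$. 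Klt surfaces are $\Q$-factorial, so $E_n^2<0$. Since $(X_n,C_n+(\min_id_{n,i})T_n)$ has smaller boundary than the klt pair $(X_n,B_n)$, it is klt; hence $\tau_n:=\lct(T_n,X_n,C_n)\ge\min_id_{n,i}\ge 1-\epsilon_n$. As $C_n$ has coefficients in the DCC set $\Lambda$ and $T_n$ in $\{1\}$, Proposition \ref{p-acc-surfaces} ensures that $\{\tau_n\}$ satisfies ACC, which together with $\tau_n\le 1$ and $\tau_n\to 1$ forces $\tau_n=1$ for $n$ large; thus $(X_n,\Delta_n)$ is lc for all such $n$.

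Next I would use adjunction along $E_n$ to compare degrees. Since $E_n$ is contracted by an extremal birational morphism on a $\Q$-factorial klt surface, $E_n\cong\PP^1$. Applying Proposition \ref{p-adjunction-DCC} to $(X_n,\Delta_n)$ along its coefficient-$1$ component $E_n$ gives $(K_{X_n}+\Delta_n)|_{E_n}=K_{E_n}+\Gamma_n$ with $\Gamma_n$ a boundary whose coefficients lie in the fixed DCC set $\mathfrak{S}_{\Lambda\cup\{1\}}$; the contradiction hypothesis reads $\deg\Gamma_n>2$. To bound $\deg\Gamma_n$ from above, let $b_n\in[1-\epsilon_n,1)$ be the coefficient of $E_n$ in $B_n$ and set $\Delta_n^E:=B_n+(1-b_n)E_n$; then $(K_{X_n}+\Delta_n^E)\cdot E_n=(K_{X_n}+B_n)\cdot E_n+(1-b_n)E_n^2\le 0$, since both summands are nonpositive. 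The plt pair $(X_n,\Delta_n^E)$ also admits adjunction along $E_n$, producing a boundary $\Gamma_n^E$ on $E_n$ with $\deg\Gamma_n^E\le 2$; moreover for each $D_{n,i}\ne E_n$ the contribution to $\Gamma_n^E$ is at least $d_{n,i}(D_{n,i}\cdot E_n)\ge(1-\epsilon_n)(D_{n,i}\cdot E_n)$, giving $\sum_{D_{n,i}\ne E_n}(D_{n,i}\cdot E_n)\le 2/(1-\epsilon_n)$. Since $\Gamma_n-\Gamma_n^E$ records the coefficient jump from $d_{n,i}$ to $1$ for each $D_{n,i}\ne E_n$, one obtains $\deg\Gamma_n-\deg\Gamma_n^E\le\epsilon_n\sum_{D_{n,i}\ne E_n}(D_{n,i}\cdot E_n)\le 2\epsilon_n/(1-\epsilon_n)$, whence $\deg\Gamma_n\le 2+O(\epsilon_n)$.

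Combining $\deg\Gamma_n>2$ with $\deg\Gamma_n\le 2+O(\epsilon_n)$ shows that $\deg\Gamma_n$ tends to $2$ strictly from above. The final contradiction comes from the DCC structure of the coefficients of $\Gamma_n$: the set of possible values of $\deg\Gamma_n$ arising from boundaries on $\PP^1$ with coefficients in the fixed DCC set $\mathfrak{S}_{\Lambda\cup\{1\}}$ cannot accumulate at $2$ strictly from above, which one formalizes by another application of ACC for lc thresholds on $\PP^1$ to the pairs $(\PP^1,\Gamma_n)$. The hard part is performing the adjunction calculation cleanly on the possibly singular surface $X_n$, correctly accounting for the singularity contributions to the differents $\Gamma_n$ and $\Gamma_n^E$, and then extracting the DCC/ACC contradiction in the last step; the other steps are routine.
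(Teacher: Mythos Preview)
Your argument is essentially correct, but the paper's route is considerably shorter, and the two diverge at the very first step. You apply ACC for lc thresholds on $X$ to conclude that $(X_n,\Delta_n)$ is lc; the paper instead applies it on the \emph{target} $Y$. Since $-(K_X+B)$ is nef$/Y$, the pair $(Y,B_Y)$ is klt, and because $B_Y\ge \Delta_Y-\epsilon\rddown{\Delta_Y}$ one gets $(Y,\Delta_Y-\epsilon\rddown{\Delta_Y})$ klt. The coefficients of $\Delta_Y-\rddown{\Delta_Y}$ lie in $\Lambda$ and those of $\rddown{\Delta_Y}$ in $\{1\}$, so ACC forces the threshold to avoid $[1-\epsilon,1)$; hence $(Y,\Delta_Y)$ is lc. Writing $K_X+\Delta=f^*(K_Y+\Delta_Y)+eE$, the coefficient of $E$ in $\Delta$ being $1$ gives $e=a(E,Y,\Delta_Y)\ge 0$, and then $(K_X+\Delta)\cdot E=eE^2\le 0$ finishes the proof in one line. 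In other words, lc on $Y$ immediately yields the desired nefness, whereas lc on $X$ does not---which is why you are forced into the adjunction-and-degree comparison that the paper avoids entirely.

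Two small points on your write-up. First, you should work on the normalization $E_n^\nu$ rather than claim $E_n\cong\PP^1$ outright; what you actually show (via $(K_{X_n}+\Delta_n^E)\cdot E_n<0$ and adjunction) is $E_n^\nu\cong\PP^1$, and that suffices. Second, your closing appeal to ``ACC for lc thresholds on $\PP^1$'' is not the right mechanism: the pairs $(\PP^1,\Gamma_n)$ are already lc, so thresholds give nothing. The correct statement is that $\deg\Gamma_n$ is a sum of at most $\lfloor 3/\gamma_0\rfloor$ elements of the DCC set $\mathfrak{S}_{\Lambda\cup\{1\}}$ (where $\gamma_0$ is its least positive element), and such bounded sums again form a DCC set, which cannot contain a sequence strictly decreasing to $2$.
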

\begin{proof}
Note that klt pairs of dimension $2$ are $\Q$-factorial so $K_X+\Delta$ is $\R$-Cartier.   
 By \ref{p-acc-surfaces}, 
we can pick $\epsilon>0$ so that: if $(T,C)$ is lc of dimension $2$ and $M\ge 0$ such that 
the coefficients of $C$ belong to $\Lambda$ and the coefficients of $M$ belong to $\{1\}$, 
then the lc threshold $\lct(M,T,C)$ does not belong to $[1-\epsilon,1)$.

 Now since $(X,B)$ is klt and $-(K_X+B)$ is nef$/Y$, 
$(Y,B_Y)$ is also klt. Thus $(Y,\Delta_Y-\epsilon\rddown{\Delta_Y})$ is klt because 
$B_Y\ge \Delta_Y-\epsilon\rddown{\Delta_Y}$. 
In particular, the lc threshold of $\rddown{\Delta_Y}$ with respect to $(Y,\Delta_Y-\rddown{\Delta_Y})$ is 
at least $1-\epsilon$. Note that the coefficients of $\Delta$ belong $\Lambda\cup \{1\}$ 
and the coefficients of $\Delta_Y-\rddown{\Delta_Y}$ belong to $\Lambda$.
Thus by our choice of $\epsilon$, the pair $(Y,\Delta_Y)$ is lc. Therefore we can write  
$$
K_X+\Delta=f^*(K_Y+\Delta_Y)+eE
$$
for some $e\ge 0$ because the coefficient of $E$ in $\Delta$ is $1$. 
This implies that $-(K_X+\Delta)$ is indeed nef$/Y$.\\
\end{proof}

\subsection{Global ACC for surfaces}
In this subsection we prove a global type of ACC for surfaces (\ref{p-ACC-global}) which 
will be used in the proof of Theorem \ref{t-ACC}.

\begin{constr}\label{rem-Fano-twist}
Let $\epsilon\in (0,1)$ and let $X'$ be a klt Fano surface with $\rho(X')=1$.
Assume that $X'$ is not $\epsilon$-lc.
Pick a prime divisor $E$ (on birational models of $X'$) with log discrepancy $a(E,X',0)<\epsilon$. 
By a version of Lemma \ref{l-extraction} 
in dimension two, there is a
birational contraction $Y'\to X'$ which is extremal and has $E$ as the only exceptional
divisor.  Under our assumptions  
it is easy to find a boundary $D_{Y'}$ such that $(Y',D_{Y'})$ is klt and $K_{Y'}+D_{Y'}\sim_\R -eE$ 
for some $e>0$. In particular, we can run an LMMP on $-E$ which ends with a Mori fibre space $X''\to T''$
so that $E''$ positively intersects the extremal ray defining $X''\to T''$ where $E''$ is the 
birational transform of $E$. 

As $\rho(X')=1$, we get $\rho(Y')=2$. One of the extremal rays of $Y'$ 
gives the contraction $Y'\to X'$. The other one either gives $X''\to T''$ with $Y'=X''$ 
or it gives a birational contraction $Y'\to X''$.
If $\dim T''=0$, then $X''$ is also a klt Fano with
$\rho(X'')=1$. 
\end{constr}

\begin{lem}\label{l-bnd-comps-surfaces}
Let $b\in (0,1)$ be a real number. Then there is a natural number $m$ depending only 
on $b$ such that: let $(X,B)$ be a klt pair of dimension $2$ and $x\in X$ a closed point; 
then the number of those components of 
$B$ containing $x$ and with coefficient $\ge b$ is at most $m$. 
\end{lem}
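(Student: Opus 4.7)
The plan is to find a single divisorial valuation $v$ on $X$ centered at $x$ whose log discrepancy with respect to $(X,0)$ is at most $2$, and to read the desired bound on $N$ directly off the klt inequality. The key elementary point is that any divisorial valuation $v$ centered at the closed point $x$ is $\Z$-valued and strictly positive on the maximal ideal $\mathfrak{m}_x$, so $v(D)\ge 1$ for every prime divisor $D$ on $X$ passing through $x$.

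Granting the existence of such a $v=\mult_E$ with $a(E,X,0)\le 2$, the bound on $N$ is immediate. Indeed, if $B_1,\dots,B_N$ are the distinct prime components of $B$ through $x$ with coefficients $b_i\ge b$, then $v(B_i)\ge 1$ for each $i$, so
$$
v(B) = \sum_j (\mathrm{coef}_j(B))\,v(B_j) \;\ge\; \sum_{i=1}^N b_i\,v(B_i) \;\ge\; bN.
$$
On the other hand, $(X,B)$ being klt forces $a(E,X,B)=a(E,X,0)-v(B)>0$, that is, $v(B)<a(E,X,0)\le 2$. Combining, $bN<2$, and one takes $m:=\lceil 2/b\rceil$.

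To produce the exceptional divisor $E$ with $a(E,X,0)\le 2$ and $\cent_X(E)=\{x\}$, I would split into two cases. If $x$ is a smooth point of $X$, take $E$ to be the exceptional divisor of the blowup of $x$; then $a(E,X,0)=2$. If $x$ is a singular (hence klt) surface point, take $E$ to be any exceptional divisor on the minimal resolution $f\colon Y\to X$. Writing $K_Y=f^*K_X+\sum d_j E_j$ and using $f^*K_X\cdot E_i=0$, the minimality condition $K_Y\cdot E_i\ge 0$ becomes $\sum_j d_j(E_j\cdot E_i)\ge 0$ for every $i$. Since the intersection matrix $(E_j\cdot E_i)$ is negative definite with non-positive off-diagonal entries, its negative is an $M$-matrix whose inverse has non-negative entries, and this forces $d_j\le 0$ for all $j$; hence $a(E_j,X,0)=1+d_j\le 1\le 2$.

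The main pitfall is the singular case: klt surface singularities can be combinatorially unbounded (e.g.\ $A_n$ for arbitrary $n$), so one might worry that no single log discrepancy on the minimal resolution is uniformly bounded. What saves us is that we need only \emph{one} exceptional divisor centered at $x$, and the $M$-matrix argument above yields the uniform bound $a(E,X,0)\le 1$ for \emph{every} exceptional divisor on the minimal resolution of a singular klt surface point, with no dependence on the combinatorial type. Alternatively, one could bypass this calculation entirely by invoking the general inequality $\mld(x,X,0)\le\dim X=2$.
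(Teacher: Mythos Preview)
Your argument is clean and correct when $x$ is a smooth point: the exceptional divisor of the blow-up has $a(E,X,0)=2$, every prime divisor $D$ through $x$ is Cartier with local equation in $\mathfrak{m}_x$, so $\mult_E(D)\ge 1$, and the bound $N<2/b$ follows.

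The gap is in the singular case. Your key claim that $v(D)\ge 1$ rests on $v$ being $\Z$-valued on $\mathfrak{m}_x$, but this only controls $v(D)$ when $D$ is \emph{Cartier} at $x$. On a klt surface singularity the Weil divisors $D_i$ through $x$ are typically only $\Q$-Cartier, and then $\mult_E(D_i)$ is the coefficient of $E$ in the $\Q$-pullback, which can be an arbitrarily small positive rational number. Concretely, on the $A_{n-1}$ singularity the minimal resolution has a chain $E_1,\dots,E_{n-1}$ of $(-2)$-curves; a prime divisor $D$ whose strict transform meets only $E_1$ has $\mult_{E_{n-1}}(D)=1/n$. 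Taking $E=E_{n-1}$ (which your proof allows, since you say ``take $E$ to be any exceptional divisor on the minimal resolution'') and $B=b\sum_{i=1}^N D_i$ with the $D_i$ of this type, your inequality $bN\le v(B)$ fails badly: you only get $v(B)=bN/n$, and $a(E,X,0)=1$ yields $N<n/b$, which is no bound at all as $n$ grows. So the argument as written does not prove the lemma; the choice of $E$ would have to be made much more carefully, and the inequality $v(D)\ge 1$ replaced by something that accounts for the local Cartier index.

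For comparison, the paper avoids this issue by extracting a single \emph{Koll\'ar component}: one adds an auxiliary $G\ge 0$ to make $x$ an lc centre of $(X,B+G)$, extracts a prime divisor $S$ with $a(S,X,B+G)=0$ via an extremal blow-up $Y\to X$, and then restricts $K_Y+S+B^\sim$ to $S^\nu\cong\PP^1$. Adjunction gives $\deg B_{S^\nu}<2$, and by the explicit shape $\frac{n-1}{n}+\frac{rb}{n}$ of the coefficients of $B_{S^\nu}$ (Proposition~\ref{p-adjunction-DCC}), both the number of points in $\Supp B_{S^\nu}$ and the integers $r$ are bounded in terms of $b$; this bounds the number of components of $B$ through $x$. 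The point is that adjunction to the Koll\'ar component packages the subtle $\Q$-Cartier bookkeeping into the hyperstandard coefficient formula, which is exactly what your direct valuation estimate misses.
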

\begin{proof}
Since $(X,B)$ is klt and $\dim X=2$, $X$ is $\Q$-factorial. 
We can assume that each coefficient of $B$ is equal to $b$ by discarding any component 
with coefficient less than $b$ and by decreasing each coefficient which is more than $b$. 
Moreover, we can assume every component of $B$ contains $x$.

Pick a nonzero $\R$-Cartier divisor $G\ge 0$ such that $(X,C:=B+G)$ is lc  
near $x$ and such that $x$ is a lc centre of $(X,B+G)$: for example we can take a 
log resolution $W\to X$ and let $G$ be the pushdown of an appropriate 
ample $\R$-divisor on $W$. Shrinking $X$ we can assume $(X,C)$ is lc. 
Since $(X,B)$ is klt, there is an extremal contraction $f\colon Y\to X$ 
which extracts a prime divisor $S$ with log discrepancy $a(S,X,C)=0$. 

Let $B_Y$ be the sum of $S$ and the birational transform of $B$. 
Then $-(K_Y+B_Y)$ is ample$/X$. Apply adjunction (\ref{p-adjunction-DCC}) and write 
$K_{S^\nu}+B_{S^\nu}$ for the pullback of $K_Y+B_Y$ to the normalization of $S$. 
As $-(K_{S^\nu}+B_{S^\nu})$ is ample, ${S^\nu}\simeq \PP^1$ and 
$\deg B_{S^\nu}<2$. 

By \ref{p-adjunction-DCC}, the coefficient of each $s\in \Supp B_{S^\nu}$ 
is of the form $\frac{n-1}{n}+\frac{rb}{n}$ for some integer $r\ge 0$ and some $n\in \N\cup \{\infty\}$. 
In particular, 
the number of the components of $B_{S^\nu}$ is bounded and the number $r$ in the 
formula is also bounded. This bounds the number of the components of $B$ because $r$ is 
more than or equal to the number of those  
components of $B_Y-S$ which pass through the image of $s$.\\  
\end{proof}

\begin{prop}\label{p-ACC-global}
Let $\Lambda\subset [0,1]$ be a DCC set of real numbers. Then there is a 
finite subset $\Gamma\subset \Lambda$ with the following property: 
let $(X,B)$ be a pair and $X\to Z$ a projective morphism such that 

$\bullet$ $(X,B)$ is lc of dimension $2$ over $k$,

$\bullet$ the coefficients of $B$ are in $\Lambda$, 

$\bullet$ $K_X+B\equiv 0/Z$,

$\bullet$ $\dim X>\dim Z$.

Then the coefficient of each horizontal$/Z$ component of $B$ is in $\Gamma$. 
\end{prop}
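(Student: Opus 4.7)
The plan is to argue by contradiction: suppose no such finite $\Gamma$ exists. Then there is a sequence of triples $(X_n, B_n)$ with projective contractions $X_n \to Z_n$ satisfying the hypotheses, together with horizontal prime components $D_n$ of $B_n$ whose coefficients $b_n \in \Lambda$ strictly increase to some limit $b^* \in (0,1]$.

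After replacing each $(X_n, B_n)$ by a $\Q$-factorial dlt model (by the dimension two analogue of Lemma \ref{l-dlt-model-proj}) and running a $K_{X_n}$-MMP over $Z_n$, we may assume $X_n \to Z_n$ factors through a Mori fibre space $X'_n \to T'_n / Z_n$: every $K_{X_n}$-negative extremal ray is $B_n$-positive (since $K_{X_n}+B_n\equiv 0/Z_n$), so the MMP terminates either with $K_{X_n}$ nef over $Z_n$ (whence $B_n \equiv 0/Z_n$, contradicting the existence of a horizontal component) or with a Mori fibre space. By choosing the MMP carefully, using that each divisorial contraction kills exactly one prime divisor, we may arrange that the birational transform of $D_n$ is not contracted. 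Two structural cases then arise.

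In the conic-bundle case $\dim T'_n = \dim Z_n + 1$, the general fibre $F_n$ is $\PP^1$, and adjunction to $F_n$ yields
\[
\sum_i c_{n,i}\, m_{n,i} \;=\; 2,
\]
where $c_{n,i} \in \Lambda$ are the coefficients of the horizontal components of the transformed boundary and $m_{n,i} \in \Z_{>0}$ are their intersection multiplicities with $F_n$; one of the $c_{n,i}$ equals $b_n$, and the corresponding $m_{n,i}$ is bounded by $2/b_1$. The standard finiteness of solutions to $\sum c_i m_i = 2$ with $c_i$ in a DCC set and $m_i \in \Z_{>0}$ (analogous to the adjunction-on-$\PP^1$ argument in the proof of Proposition \ref{p-acc-surfaces}) then shows that the $b_n$ can take only finitely many values, a contradiction. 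In the Fano case $T'_n$ is a point and $X'_n$ is a klt Fano surface of Picard number one; here we invoke Construction \ref{rem-Fano-twist} to extract a divisor of small log discrepancy and pass to a new Mori fibre space, iterating until we either fall back into the conic-bundle case or obtain a uniformly $\epsilon$-lc Fano family. Lemma \ref{l-bnd-comps-surfaces} bounds the number of components of $B_n$ with coefficient above any fixed threshold through a single point, which controls the iteration and preserves enough information about $D_n$.

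The main obstacle will be the Fano-with-$\rho=1$ subcase: one must both ensure that the iteration from Construction \ref{rem-Fano-twist} terminates (via a suitable monotone invariant such as minimal log discrepancy) and establish a boundedness statement for $\epsilon$-klt Fano surfaces in characteristic $p>5$ sufficient to force ACC on their boundary coefficients. Both ingredients should follow from the ACC for lc thresholds on surfaces (Proposition \ref{p-acc-surfaces}) together with Lemma \ref{l-bnd-comps-surfaces}, but care is needed to avoid relying on characteristic-zero tools such as Kawamata--Viehweg vanishing or HMX-style global ACC, and to check that the DCC-sums finiteness lemma is applied with the correct level of uniformity in the intersection multiplicities $m_{n,i}$.
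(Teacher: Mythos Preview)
Your overall strategy matches the paper's: contradiction, run an MMP to a Mori fibre space, handle the conic-bundle case by adjunction to a general $\PP^1$-fibre, and in the $\rho=1$ Fano case iterate Construction \ref{rem-Fano-twist}, using Lemma \ref{l-bnd-comps-surfaces} and Alexeev's boundedness of $\epsilon$-lc del Pezzo surfaces (which is characteristic-free, so there is no issue there). However, three points need tightening.

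First, running a plain $K_{X_n}$-MMP over $Z_n$ does not force $D_n$ to survive or to be horizontal over the new base: a $K_{X_n}$-negative ray is $B_n$-positive in total, but an individual component of $B_n$ may still be negative on it and get contracted. The paper instead runs the MMP on $K_{X_n}+B_n-b_nD_n$ with scaling of $b_nD_n$; since $K_{X_n}+B_n\equiv 0/Z_n$, every step is then $D_n$-positive, so $D_n$ is never contracted and is automatically ample on the fibre of the resulting Mori fibre space, giving $n_{n,1}>0$ in the $\PP^1$ identity.

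Second, you jump straight to the klt Fano case. The paper needs a separate reduction (its Step~3) when $(X'_n,B'_n)$ is lc but not klt: one restricts to a one-dimensional lc centre, or extracts a divisor over a zero-dimensional lc centre and runs a further MMP, in either case landing back in a $\PP^1$-adjunction situation.

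Third, and most importantly, you have not identified the mechanism that makes the Fano-twist iteration terminate. The point is that each newly extracted divisor $E_i$ appears in the boundary with coefficient $\ge 1-\epsilon$, and the paper shows via Lemma \ref{l-lim-nefness} (this is where the surface ACC for lc thresholds actually enters) that the birational map $Y'_i\to X''_i$ can \emph{never} contract a boundary component with coefficient $\ge 1-\epsilon$. Hence after $m$ iterations there are at least $m$ such components, and since on the $\rho=2$ model each of them meets one of the two exceptional curves, Lemma \ref{l-bnd-comps-surfaces} bounds $m$. Your proposed monotone invariant ``minimal log discrepancy'' does not do this job by itself: the mld of the surfaces keeps dropping along the iteration, and it is only once the iteration is shown to stop that one invokes Alexeev's boundedness to finish.
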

\begin{proof}
\emph{Step 1.}
We can assume that $1\in \Lambda$.
If the proposition is not true, then there is a sequence $(X_i,B_i), X_i\to Z_i$ 
of pairs and morphisms as in the proposition such that the set of the coefficients 
of the horizontal$/Z_i$ components of all the $B_i$ put together does not 
satisfy ACC. By taking $\Q$-factorial dlt 
models we can assume that $(X_i,B_i)$ are $\Q$-factorial dlt. 
Write $B_i=\sum b_{i,j}B_{i,j}$. We may assume that $B_{i,1}$ is horizontal$/Z_i$ 
and that $b_{1,1}<b_{2,1}<\cdots$. 

\emph{Step 2.}
First assume that $\dim Z_i=1$ for every $i$. Run the LMMP$/Z_i$ on 
$K_{X_i}+B_i-b_{i,1}B_{i,1}$ with scaling of $b_{i,1}B_{i,1}$. 
This terminates with a model $X_i'$ having an extremal contraction 
$X_i'\to Z_i'/Z_i$ such that $K_{X_i'}+B_i'-b_{i,1}B_{i,1}'$ is numerically negative over $Z_i'$. 
Let $F_i'$ be the reduced variety associated to a general fibre of $X_i'\to Z_i'$. 
Since $K_{X_i'}+B_i'\equiv 0/Z'$ and ${F'_i}^2=0$, we get $(K_{X_i'}+B_i'+F_i')\cdot F_i'=0$ 
hence the arithmetic genus $p_a(F_i')<0$ which implies that $F_i'\simeq \PP^1_k$. 
 We can write 
$$
\deg (K_{X_i'}+B_i'+F_i')|_{F_i'}=-2+\sum n_{i,j}b_{i,j}= 0
$$
for certain integers $n_{i,j}\ge 0$ such that $n_{i,1}>0$. Since the $b_{i,j}$ 
belong to the DCC set $\Lambda$, $n_{i,1}$ is bounded from above and below. 
Moreover, we can assume that the sums $\sum_{j\ge 2} n_{i,j}b_{i,j}$ satisfy the DCC hence 
$n_{i,1}b_{i,1}=2-\sum_{j\ge 2} n_{i,j}b_{i,j}$ satisfies the ACC, a contradiction. 

\emph{Step 3.}
From now on we may assume that $\dim Z_i=0$ for every $i$. Run the 
LMMP$/Z_i$ on $K_{X_i}+B_i-b_{i,1}B_{i,1}$ with scaling of $b_{i,1}B_{i,1}$. 
This terminates with a model $X_i'$ having an extremal contraction 
$X_i'\to Z_i'$ such that $K_{X_i'}+B_i'-b_{i,1}B_{i,1}'$ is numerically negative over $Z_i'$. 
If $\dim Z_i'=1$ for infinitely many $i$, then we get a contradiction by Step 2. 
So we assume that $Z_i'$ are all points hence each $X_i'$ is a Fano with Picard number one.  

Assume that $({X_i}',B_i')$ is lc but not klt for every $i$. Assume that each $({X_i}',B_i')$ 
has an lc centre $S_i'$ of dimension one. Let $K_{S_i'}+B_{S_i'}=(K_{X_i'}+B_i')|_{S_i'}$ 
by adjunction. Note that $S_i'$ is normal since $({X_i'},B_i'-b_{i,1}B_{i,1}')$ 
is $\Q$-factorial dlt. Since $K_{S_i'}+B_{S_i'}\equiv 0$, $S_i'\simeq \PP^1_k$.
If $\Supp B_{i,1}'$ contains an lc centre for infinitely many $i$, then we get a contradiction 
by ACC for lc thresholds in dimension $2$. So we can assume that $\Supp B_{i,1}'$ does not 
contain any lc centre, in particular, none of the points of $S_i'\cap B_{i,1}'$ is 
an lc centre. Now, since $\{b_{i,j}\}$ does not satisfy ACC, by Proposition 
\ref{p-adjunction-DCC}, the set of the coefficients of all the $B_{S_i'}$ 
satisfies DCC but not ACC which gives a contradiction as above (by considering the coefficients of 
the points in $S_i'\cap B_{i,1}'$). So we can assume that each $({X_i}',B_i')$ 
has an lc centre of dimension zero. 
By a version of Lemma \ref{l-extraction} in dimension $2$, there is a projective birational contraction $Y_i'\to X_i'$ 
which extracts only one prime divisor $E_i'$ and it satisfies $a(E_i',X_i',B_i')=0$.
Let $K_{Y_i'}+B_{Y_i'}$ be the pullback of $K_{X_i'}+B_i'$. By running the 
LMMP on $K_{Y_i'}+B_{Y_i'}-E_i'$, we arrive on a model on which either 
the birational transform of $E_i'$ intersects the birational transform of $B_{i,1}'$ for infinitely many $i$, 
or we get a Mori fibre space over a curve whose general fibre intersects the birational 
transform of $B_{i,1}'$ for infinitely many $i$. 
In any case, we can apply the arguments above to get a contradiction. 
So from now on we may assume that $({X_i}',B_i')$ are all klt.

\emph{Step 4.}
If there is $\epsilon>0$ such that $X_i'$ is $\epsilon$-lc for every $i$, then 
we are done since such $X_i'$ are bounded by Alexeev [\ref{Alexeev}]. So we can assume 
that the minimal log discrepancies of the $X_i'$ form a strictly decreasing 
sequence of positive numbers. Since $({X_i}',B_i')$ are klt, we can assume that 
the minimal log discrepancies of the $(X_i',B_i')$ also form a strictly decreasing 
sequence of positive numbers. As in Construction \ref{rem-Fano-twist}, we find a 
contraction ${Y}_i'\to X_i'$ extracting a prime divisor $E_i$ with minimal log discrepancy  
$a(E_i,X_i',B_i')<\epsilon$ and run a $-E_i$-LMMP to get a Mori fibre structure $X_i''\to Z_i''$. 
If $\dim Z_i''=1$ for each $i$, we use Step 2 to get a contradiction. So we may assume 
that $\dim Z_i''=0$ for each $i$. Note that the exceptional divisor of $X_i''\bir X_i'$ 
is a component of $B_i''$ with coefficient $\ge 1-\epsilon$ where $K_{X_i''}+B_i''$ 
is the pullback of $K_{X_i'}+B_i'$. 

Write $K_{Y_i'}+{B}_{Y_i'}$ for the pullback of $K_{X_i'}+B_i'$. 
By construction, the coefficients of ${B}_{Y_i'}$ belong to some DCC subset of 
$\Lambda\cup [1-\epsilon,1]$. We show that if $\epsilon$ is sufficiently small, then 
${Y}_i'\to X_i''$ cannot contract a component of ${B}_{Y_i'}$ with coefficient $\ge 1-\epsilon$.
Indeed let $\Delta_{Y_i'}$ be obtained from  ${B}_{Y_i'}$ by replacing each 
coefficient $\ge 1-\epsilon$ with $1$. Then by Lemma \ref{l-lim-nefness}, 
$-(K_{Y_i'}+{\Delta}_{Y_i'})$ is nef over both $X_i'$ and $X_i''$. As $\rho(Y_i')=2$, 
$-(K_{Y_i'}+{\Delta}_{Y_i'})$ is nef globally. This is a contradiction because 
the pushdown of $K_{Y_i'}+{\Delta}_{Y_i'}$ to $X_i''$ is ample.

\emph{Step 5.}
Now replace $({X_i'},B_i')$ with $({X_i''},B_i'')$ and repeat the process of Step 4, $m$ times. 
By the last paragraph the new components of $B_i'$ that 
appear in the process are not contracted again. So 
we may assume that we have at least $m$ components of $B_i'$ with coefficients $\ge 1-\epsilon$. 
Let $x_i'$ be the image of the exceptional divisor of $Y_i'\to X_i'$ and 
let $x_i''$ be the image of the exceptional divisor of $Y_i'\to X_i''$. 
Also let $m_i'$ be the number of those components of $B_i'$ with coefficient 
$\ge 1-\epsilon$ and passing through $x_i'$. Define $m_i''$ similarly. Since 
$\rho(Y_i')=2$, each component of $B_{Y_i'}$ intersects the exceptional 
divisor of $Y_i'\to X_i'$ or the exceptional divisor of $Y_i'\to X_i''$. Therefore,  
$m_i'+m_i''\ge m$. 

Finally by Lemma \ref{l-bnd-comps-surfaces} both $m_i'$ and $m_i''$ 
are bounded hence $m$ is also bounded. This means that after finitely many times applying the 
process of Step 4, we can assume there is $\epsilon>0$ such that $X_i'$ is $\epsilon$-lc for every $i$, 
and then apply boundedness of such $X_i'$ [\ref{Alexeev}]. 
\end{proof}

\subsection{$3$-folds}

\begin{proof}(of Theorem \ref{t-ACC})
If the theorem does not hold, then there is a sequence $(X_i,B_i)$ of lc pairs of 
dimension $3$ over $k$ and $\R$-Cartier divisors $M_i\ge 0$ such that the coefficients of 
$B_i$ are in $\Lambda$, the coefficients of $M_i$ are in $\Gamma$ but such that 
the $t_i:=\lct(M_i,X_i,B_i)$ form a strictly increasing sequence of numbers.
We may assume that each $(X_i,\Delta_i:=B_i+t_iM_i)$ has an lc centre of dimension $\le 1$ 
contained in $\Supp M_i$.  
Let $(Y_i,\Delta_{Y_i})$ be a $\Q$-factorial dlt model of 
$(X_i,\Delta_i)$ such that there is an exceptional divisor on $Y_i$ 
mapping onto an lc centre inside $\Supp M_i$. Such $Y_i$ exist by Lemma \ref{l-extraction}. 

There is a prime exceptional divisor $E_i$ of $Y_i\to X_i$ such that it intersects the 
birational transform of $M_i$ and that it maps into $\Supp M_i$. 
Note that $E_i$ is normal by Lemma \ref{l-plt-normal}.
Let $E_i\to Z_i$ be the contraction induced by $E_i\to X_i$. Now by adjunction define 
$K_{E_i}+\Delta_{E_i}=(K_{Y_i}+\Delta_{Y_i})|_{E_i}$. Then the set of all the coefficients of the 
horizontal$/Z_i$ components of the $\Delta_{E_i}$ satisfies DCC  
but  not ACC, by Proposition \ref{p-adjunction-DCC}. 
This contradicts Proposition \ref{p-ACC-global}.

\end{proof}

\section{Non-big log divisors: proof of \ref{t-aug-b-non-big}}\label{s-numerical}

\begin{lem}\label{l-movable-curve}
Let $X$ be a normal projective variety of dimension $d$ over an algebraically closed field 
(of any characteristic). Let $A$ an ample $\R$-divisor and 
$P$ a nef $\R$-divisor with $P^d=0$. Then for any $\epsilon>0$, there exist  
$\delta\in [0,\epsilon]$ and a very ample divisor $H$ such that $(P-\delta A)\cdot H^{d-1}=0$.
\end{lem}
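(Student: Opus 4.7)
The plan is to study the continuous scale-invariant ratio $\rho(Q) := (P \cdot Q^{d-1})/(A \cdot Q^{d-1})$, defined on the ample cone of $N^1(X)_\R$, show that it can be made arbitrarily small, approximate by a very ample Cartier divisor, and then simply set $\delta := \rho(H)$.

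First I would evaluate $\rho$ along the family of ample $\R$-divisors $L_t := P + tA$ for $t > 0$ (ample since nef $+$ ample). Write $a_k := P^{d-k} \cdot A^k$; each $a_k$ is nonnegative since all factors are nef, $a_0 = P^d = 0$ by hypothesis, and $a_d = A^d > 0$. Let $i_0 \geq 1$ be the smallest index with $a_{i_0} > 0$. Binomial expansion gives
$$P \cdot L_t^{d-1} = \sum_{i=0}^{d-1}\binom{d-1}{i} t^i a_i,\qquad A \cdot L_t^{d-1} = \sum_{i=0}^{d-1}\binom{d-1}{i} t^i a_{i+1},$$
so the numerator has order $O(t^{i_0})$ as $t \to 0^+$, while the denominator has nonzero leading term $\binom{d-1}{i_0-1}\, t^{i_0-1} a_{i_0}$. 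Hence $\rho(L_t) = O(t)$, and in particular $\rho(L_t) < \epsilon$ once $t$ is small enough.

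Next, since ample Cartier classes are dense in the (open) ample cone of $N^1(X)_\R$, and $\rho$ is continuous there (the denominator is positive since $A$ and $Q$ are both ample), I can pick an ample Cartier divisor $H_0$ with numerical class close enough to $L_t$ that $\rho(H_0) < \epsilon$. Passing to a sufficiently divisible multiple $H := m H_0$, the divisor $H$ becomes very ample while $\rho(H) = \rho(H_0)$ by scale-invariance. Setting $\delta := \rho(H) \in [0,\epsilon)$ (nonnegativity is clear since $P$ is nef and $H^{d-1}$ is a complete-intersection effective curve class), the identity $(P - \delta A) \cdot H^{d-1} = 0$ holds by construction. The only substantive step is the leading-order computation showing $\rho(L_t) \to 0$, which uses the hypothesis $P^d = 0$ in an essential way; the rest is density and continuity in $N^1(X)_\R$.
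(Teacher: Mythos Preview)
Your proof is correct and follows essentially the same approach as the paper. Both arguments consider the family $P+tA$, expand $(P+tA)^{d-1}$ binomially, and use the hypothesis $P^d=0$ to show that the leading order of $P\cdot(P+tA)^{d-1}$ is strictly higher in $t$ than that of $A\cdot(P+tA)^{d-1}$; then both approximate by a very ample $H$ and solve the resulting linear equation for $\delta$. The only cosmetic difference is that the paper phrases the key inequality as $(P-\epsilon A)\cdot(P+\tau A)^{d-1}<0$ for small $\tau$, whereas you package it as the ratio $\rho(L_t)\to 0$; these are equivalent formulations of the same computation.
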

\begin{proof}
First we show that there is an ample divisor $H$ such that $(P-\epsilon A)\cdot H^{d-1}<0$. 
Put $r(\tau):=(P-\epsilon A)(P+\tau A)^{d-1}$. 
Then
$$
r(\tau)=(P-\epsilon A)(P^{d-1}+a_{d-2}\tau P^{d-2}A+\dots+a_1\tau^{d-2}PA^{d-2}+\tau^{d-1}A^{d-1})
$$
 where the $a_i>0$ depend only on $d$. Put $a_{d-1}=a_0=1$, $a_{-1}=0$, and let $n$ be the smallest 
 integer such that $P^{d-n}A^{n}\neq 0$. Then we can write  
$$
r(\tau)=\sum_{i=0}^{d-1} (a_{i-1}\tau^{d-i}-\epsilon a_i\tau^{d-i-1}) P^iA^{d-i}
$$
from which we get 
$$
r(\tau)=\sum_{i=0}^{d-n} (a_{i-1}\tau^{d-i}-\epsilon a_i\tau^{d-i-1}) P^iA^{d-i}
$$
hence 
$$
\frac{r(\tau)}{\tau^{n-1}}=(a_{d-n-1}\tau-\epsilon a_{d-n}) P^{d-n}A^{n}+\tau s(\tau)
$$
for some polynomial function $s(\tau)$. Now if $\tau>0$ is sufficiently small it is clear 
that the right hand side is negative hence $r(\tau)<0$. 
  
Choose $\tau>0$ so that $r(\tau)<0$. Since $P+\tau A$ is ample and ampleness 
is an open condition, there is an ample $\Q$-divisor $H$ close to $P+\tau A$ 
such that $(P-\epsilon A)\cdot H^{d-1}<0$. By replacing $H$ with a multiple we 
can assume that $H$ is very ample. Since $P\cdot H^{d-1}\ge 0$ by the nefness of $P$, 
it is then obvious that there is some 
$\delta\in [0,\epsilon]$ such that $(P-\delta A)\cdot H^{d-1}=0$.\\
\end{proof}

\begin{proof}(of Theorem \ref{t-aug-b-non-big})
Assume that $D^d=0$. By replacing $A$ we may assume that it is ample.
Fix $\alpha>0$. By Lemma \ref{l-movable-curve}, there exist a number $t$ sufficiently close to $1$ 
(possibly equal to $1$) and a very ample divisor $H$ such that  
$$
(K_X+B+t(A+\alpha D))\cdot H^{d-1}=0
$$ 
Now we can view $H^{d-1}$ as a $1$-cycle on $X$. For each point $x\in X$, there is an effective 
$1$-cycle $C_x$ whose class is the same as $H^{d-1}$ and such that $x\in C_x$. Since 
$H$ is very ample, we may assume that $C_x$ is irreducible and that it is 
inside the smooth locus of $X$ for general $x$.
In particular, we have  
$$
(K_X+B+t(A+\alpha D))\cdot C_x= 0
$$ 

Pick a general $x\in X$ and let $C_x$ be the curve mentioned above. 
Since $B$ is effective and $A+\alpha D$ is ample, we get   
$K_X\cdot C_x<0$. 
Thus by Koll\'ar [\ref{kollar}, Chapter II, Theorem 5.8],  
there is a rational curve $L_x$ passing through $x$  such that 
$$
0<A\cdot L_x\le (A+\alpha D)\cdot L_x\le (2d) \frac{(A+\alpha D)\cdot C_x}{-K_X\cdot C_x}
$$
$$
=\frac{2d}{t} (1+\frac{B\cdot C_x}{K_X\cdot C_x})\le \frac{2d}{t}<3d
$$ 
because $K_X\cdot C_x<0$, $B\cdot C_x\ge 0$, and $t$ is sufficiently close to $1$. 
Note that although $K_X$ and $B$ need not be $\R$-Cartier, the intersection 
numbers still make sense since $C_x$ is inside the smooth locus of $X$.

As $A$ is ample and $A\cdot L_x\le 3d$, we can 
assume that such $L_x$ (for general $x$) belong to a bounded family $\mathcal{L}$ of curves 
on $X$ (independent of the choice of $t,\alpha$). 
Therefore there are only
finitely many possibilities for the intersection numbers $D\cdot L_x$.  
If we choose $\alpha$ sufficiently large, then the inequality $(A+\alpha D)\cdot L_x\le 3d$ 
implies $D\cdot L_x=0$ 
and so we get the desired family.\\
\end{proof}


\vspace{2cm}

\flushleft{DPMMS}, Centre for Mathematical Sciences,\\
Cambridge University,\\
Wilberforce Road,\\
Cambridge, CB3 0WB,\\
UK\\
email: c.birkar@dpmms.cam.ac.uk\\

\end{document}